\newtheorem{prop}{Proposition}[section]
\newtheorem{thm}[prop]{Theorem}
\newtheorem{cor}[prop]{Corollary}
\newtheorem{defi}[prop]{Definition}
\newtheorem{rem}[prop]{Remark}
\newtheorem{lem}[prop]{Lemma}
\newtheorem{nota}[prop]{Notation}
\newtheorem*{agra}{Acknowlegment}
\numberwithin{equation}{section}
\begin{document}

\title{Equimultiple Coefficient Ideals}
\author{P. H. Lima
\thanks{Work partially supported by CAPES-Brazil 10056/12-2.}\,\,\,\, and \,\,\,\,V. H. Jorge P\'erez
\thanks{Work partially supported by FAPESP-Brazil 2012/20304-1 and CNPq-Brazil 303682/2012-4. {\it Key words}: coefficient ideals, integral closure, reduction, multiplicity}}

\date{}
\maketitle

\begin{abstract}
Let $(R,\mathfrak{m})$ be a quasi-unmixed local ring and $I$ an equimultiple ideal of $R$ of analytic spread $s$.
In this paper, we introduce the equimultiple coefficient ideals. Fix $k\in \{1,...,s\}.$
The largest ideal $L$ containing $I$ such that
$e_{i}(I_{\mathfrak{p}})=e_{i}(L_{\mathfrak{p}})$ for each $i \in \{1,...,k\}$
and each minimal prime $\mathfrak{p}$ of $I$ is called the $k$-th equimultiple coefficient ideal denoted by  $I_{k}$.
It is a generalization of the coefficient ideals firstly introduced by Shah \cite{S} for the case of $\mathfrak{m}$-primary ideals.
We also see applications of these ideals. For instance, we show that the associated graded ring $G_{I}(R)$ satisfies
the $S_{1}$ condition if and only if $I^{n}=(I^{n})_{1}$ for all $n$.

\end{abstract}

\section{Introduction}
Let $(R,\mathfrak{m})$ be a quasi-unmixed local ring of dimension $d$ and $I$ an $\mathfrak{m}$-primary ideal of $R$.
Shah \cite{S} showed the existence of unique largest ideals $I_{k}$ ($1\leq k \leq d$) lying between $I$ and $\overline{I}$
such that the $k+1$ Hilbert coefficients of $I$ and $I_{k}$ coincide, that is, $e_{i}(I) = e_{i}(I_{k})$ for $0\leq i \leq k$.
These ideals are called coefficient ideals. They have been studying in some articles such as
\cite{C}, \cite{HJLS}, \cite{HLS}, \cite{S}.
In \cite{S}, it is found that if $I$ contains a regular element, then the Ratliff-Rush closure $I^{*}$ and the $d$-th coefficient ideal $I_{d}$ coincide;
moreover the author studied the associated primes of the associated graded ring $G_{I}(R)$.
In \cite{C}, Ciuperc$\breve{a}$ studies the relation between the $S_{2}$-ification of the
extended Rees algebra $S = R[It,t^{-1}]$ and the cited ideals.
In \cite{HLS}, when $R$ is a domain, it is shown that the
associated Ratliff-Rush ideal $I^{*}$ of $I$ is the contraction to $R$ of the extension
of $I$ to its blowup $\mathcal{B}(I) =\{R[I/a]_{P} \ | \ a\in I-0, \ P\in \text{spec}(R[I/a]) \}$, i.e,
$I^{*}=\bigcap \{ IS \cap R \ | \ S \in \mathcal{B}(I) \}$. If further $R$ is analytically unramified, it is shown in \cite{HJLS},
that the coefficient ideals $I_{k}$ are also contracted from a blowup $\mathcal{B}(I)^{(k)}$ which is obtained from $\mathcal{B}(I)$
by a process similar to ``$S_{2}$-ification''.

The paper is organized as follows: In section 2, it is generalized the notion of coefficient ideals (introduced by Shah); we work with an equimultiple ideal $I$, that is,
$\text{ht}(I)=s(I)$, where $s=s(I)$ is the analytic spread of $I$.
We have made use of the B\"{o}ger theorem (on Hilbert-Samuel multiplicity)
to show the existence of unique largest ideals $I_{k}$ (we use the same notation as used by Shah)
lying between $I$ and $\overline{I}$,
and satisfying
$e_{i}(I_{\mathfrak{p}})=e_{i}((I_{k})_{\mathfrak{p}})$ for $0 \leq i \leq s$ and every minimal prime $\mathfrak{p}$ of $I$.
We call them equimultiple coefficient ideals. Given an ideal $J$, we denote the unmixed part of $J$ by $J^{u}$. We show that if $I$ contains a regular element then $I_{s}=(I^{*})^{u}$, which shows $I_{s}$ is an unmixed ideal. Actually we verify that
all the equimultiple coefficient ideals are unmixed (Theorem \ref{coef_unmixed}).

In section 3, it is given a criterion to control the height of the associated primes of $G_{I}(R)$ (Theorem \ref{height_ass}).
As a consequence of this, we show that if $G_{I}(R)$ satisfies $S_{1}$, so does $G_{I^{m}}(R)$ for every $m$ (Corollary \ref{S1property}).
In \cite{NV}, Noh and Vasconcelos showed that if $R$ is a Cohen-Macaulay ring, the Rees algebra $R[It]$ satisfies $S_{2}$ and $I$ is an equimultiple ideal,
then all the powers $I^{n}$ are unmixed ideals. In this work, we verify the same result when $R$ is only a quasi-unmixed ring satisfying the $S_{2}$ condition.
Finally, we give a way to provide associated graded rings $G_{I}(R)$ satisfying the $S_{1}$ condition
(Corollary \ref{construct_of_G_I(R)}).

\section{Equimultiple Coefficient Ideals}
In this section, we show the existence of the equimultiple coefficient ideals and also we introduce a refined version for their existence.
We show that all of them are unmixed ideals, and
find their primary decompositions components. It is also seen how coefficient ideals control the height of the associated primes of $G_{I}(R)$.
For example, the associated graded ring $G_{I}(R)$ satisfies the $S_{1}$ condition
if and only if $(I^{n})_{1}=I^{n}$ for all $n$. As consequence, if $G_{I}(R)$ satisfies the $S_{1}$ condition then
$G_{I^{m}}(R)$ satisfies the $S_{1}$ condition for all $m$. Finally, we give a way to construct associated graded rings satisfying the $S_{1}$ condition.

We begin recalling the well known B\"{o}ger's theorem on Hilbert coefficients.
\begin{thm} \emph{(B\"{o}ger)}
Let $(R,\mathfrak{m})$ be a quasi-unmixed
local ring, and let $I �\subseteq J$ be two ideals such that $I$ is equimultiple. Then $J �\subseteq \overline{I}$ if
and only if $e_{0}(I_{\mathfrak{p}}) = e_{0}(J_{\mathfrak{p}})$ for every $\mathfrak{p}\in \emph{Min}(R/I)$.
\end{thm}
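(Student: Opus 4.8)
The plan is to derive B\"oger's theorem from the classical Rees multiplicity theorem for $\mathfrak{m}$-primary ideals, first by localizing at the minimal primes of $I$ and then by globalizing through the equimultiplicity hypothesis. I may assume the residue field is infinite, passing otherwise to the faithfully flat extension $R(X)=R[X]_{\mathfrak{m}R[X]}$, which preserves quasi-unmixedness, heights, analytic spread, Hilbert--Samuel multiplicities and integral closures of the relevant ideals, and descending the conclusion at the end.

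For the forward implication, assume $J\subseteq\overline{I}$ and fix $\mathfrak{p}\in\text{Min}(R/I)$. Since integral closure commutes with localization, $I_{\mathfrak{p}}\subseteq J_{\mathfrak{p}}\subseteq\overline{I}_{\mathfrak{p}}=\overline{I_{\mathfrak{p}}}$; and because $\mathfrak{p}$ is minimal over $I$, the ideal $I_{\mathfrak{p}}$ is $\mathfrak{p}R_{\mathfrak{p}}$-primary, whence so is $J_{\mathfrak{p}}$, being squeezed between $I_{\mathfrak{p}}$ and its integral closure. Thus $I_{\mathfrak{p}}$ is a reduction of $J_{\mathfrak{p}}$, and as a reduction leaves the Hilbert--Samuel multiplicity unchanged, $e_{0}(I_{\mathfrak{p}})=e_{0}(J_{\mathfrak{p}})$.

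For the converse, suppose $e_{0}(I_{\mathfrak{p}})=e_{0}(J_{\mathfrak{p}})$ for every $\mathfrak{p}\in\text{Min}(R/I)$. At each such $\mathfrak{p}$ the local ring $R_{\mathfrak{p}}$ is again quasi-unmixed (quasi-unmixedness is inherited by localizations, being equivalent to equidimensionality together with universal catenarity), and $I_{\mathfrak{p}}\subseteq J_{\mathfrak{p}}$ are $\mathfrak{p}R_{\mathfrak{p}}$-primary, since the finiteness of the equal multiplicities forces $J_{\mathfrak{p}}\neq R_{\mathfrak{p}}$. By the Rees multiplicity theorem the equality of multiplicities gives $\overline{I_{\mathfrak{p}}}=\overline{J_{\mathfrak{p}}}$, hence $J_{\mathfrak{p}}\subseteq\overline{I}_{\mathfrak{p}}$. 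It then remains to upgrade these finitely many containments at the minimal primes of $I$ to the global statement $J\subseteq\overline{I}$.

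This globalization is the heart of the matter and is precisely where equimultiplicity is indispensable. I would choose a minimal reduction $\mathfrak{a}=(a_{1},\dots,a_{s})$ of $I$; since $s=\ell(I)=\text{ht}(I)=\text{ht}(\mathfrak{a})$, the ideal $\mathfrak{a}$ is generated by $\text{ht}(\mathfrak{a})$ elements and satisfies $\overline{\mathfrak{a}}=\overline{I}$. By a theorem of Ratliff, in a quasi-unmixed local ring the integral closure of an ideal generated by height-many elements is unmixed: every associated prime of $R/\overline{\mathfrak{a}}$ is minimal over $\mathfrak{a}$ (equivalently over $I$) of height $s$. Consequently $\overline{I}=\overline{\mathfrak{a}}=\bigcap_{\mathfrak{p}\in\text{Min}(R/I)}\bigl(\overline{\mathfrak{a}}_{\mathfrak{p}}\cap R\bigr)$, and since $J_{\mathfrak{p}}\subseteq\overline{I}_{\mathfrak{p}}=\overline{\mathfrak{a}}_{\mathfrak{p}}$ for every minimal prime, we conclude $J\subseteq\overline{I}$. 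The main obstacle is exactly establishing, or invoking correctly, this unmixedness of $\overline{\mathfrak{a}}$: without equimultiplicity the integral closure could acquire embedded primes of larger height, and the valuative data of $I$ would no longer be detected solely at its minimal primes.
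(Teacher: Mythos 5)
The paper does not prove this statement: it quotes B\"oger's theorem as a known result (it is available, e.g., in the book of Herrmann--Ikeda--Orbanz cited as [HIO] and in work of Rees and B\"oger), so there is no in-paper argument to compare against, and your proposal must be judged on its own. It is correct, and it follows the classical route for this theorem. The forward direction is fine and, as you implicitly note, needs neither quasi-unmixedness nor equimultiplicity: $J_{\mathfrak{p}}$ is squeezed between the $\mathfrak{p}R_{\mathfrak{p}}$-primary ideal $I_{\mathfrak{p}}$ and $\overline{I_{\mathfrak{p}}}$, so $I_{\mathfrak{p}}$ is a reduction and $e_{0}$ is preserved. The converse correctly isolates the two essential inputs: Rees' multiplicity theorem applied in $R_{\mathfrak{p}}$ (legitimate, since formal equidimensionality localizes by Ratliff's theorem, as you say) to get $J_{\mathfrak{p}}\subseteq\overline{I}_{\mathfrak{p}}$ at each minimal prime, and then Ratliff's theorem that in a locally quasi-unmixed ring the integral closure $\overline{\mathfrak{a}}$ of an ideal generated by $\operatorname{ht}(\mathfrak{a})$ elements has no embedded primes, applied to a minimal reduction $\mathfrak{a}$ of $I$ --- this is exactly where $\operatorname{ht}(I)=\ell(I)$ and the infinite-residue-field reduction via $R(X)$ enter, and your identification of this globalization step as ``the heart of the matter'' is accurate; with $\operatorname{Ass}(R/\overline{I})=\operatorname{Min}(R/I)$ (note $\operatorname{Min}(R/\mathfrak{a})=\operatorname{Min}(R/I)$ since a reduction has the same radical, and Krull's height theorem pins all these heights at $s$), the primary decomposition $\overline{I}=\bigcap_{\mathfrak{p}}(\overline{I}_{\mathfrak{p}}\cap R)$ yields $J\subseteq\overline{I}$. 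Two cosmetic points only: the phrase ``the finiteness of the equal multiplicities forces $J_{\mathfrak{p}}\neq R_{\mathfrak{p}}$'' deserves one line (if $J_{\mathfrak{p}}=R_{\mathfrak{p}}$ then $e_{0}(J_{\mathfrak{p}})=0<e_{0}(I_{\mathfrak{p}})$), and the degenerate case $s=0$ should be excluded or treated separately, as is tacit throughout the paper ($\dim R\geq 1$ and $I$ proper with $\dim R_{\mathfrak{p}}=s$ at minimal primes).
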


The next two remarks may be found in \cite{S}. They are used when we localize an equimultiple ideal $I$ at a minimal prime $P$.

\begin{rem}\label{rem1}
Let $(R,\mathfrak{m})$ be a Noetherian local ring and $\dim R\geq 1$. Suppose $I \subseteq J$ are $\mathfrak{m}$-primary ideals and fix $k$ such that $1\leq k \leq d$.
Then for all large $n$, $e_{i}(I)=e_{i}(J)$ with $0\leq i \leq k$ if and only if $\ell(J^{n}/I^{n})\leq P(n)$, where $P(n)$ is some polynomial in $n$ of degree at most $d-(k+1)$.
\end{rem}

\begin{proof}
It suffices to observe that, for large $n$,
$$
\ell(J^{n}/I^{n})=\ell(R/I^{n})-\ell(R/J^{n})=\sum_{i=0}^{i=d}(-1)^{i}[e_{i}(I)-e_{i}(J)]\bigg(\begin{array}{c}
                                                                                           n+d-i-1 \\
                                                                                           d-i
                                                                                         \end{array}\bigg)
.$$
\end{proof}

\begin{rem}\label{rem2}
Let $(R,\mathfrak{m})$ be a Noetherian local ring and $\dim R\geq 1$. Suppose $I \subseteq I' \subseteq J$ are $\mathfrak{m}$-primary ideals and fix $k$ such that $1\leq k \leq d$. Then $e_{i}(I)=e_{i}(J)$ with $0\leq i \leq k$ if and only if $e_{i}(I)=e_{i}(I')=e_{i}(J)$ with $0\leq i \leq k$.
\end{rem}

\begin{proof}
We just use that $\ell(I'^{n}/I^{n})\leq \ell(J^{n}/I^{n})$ and apply Remark \ref{rem1}.
\end{proof}

By B\"{o}ger's Theorem it is easy to see that $\overline{I}$ is the unique largest ideal $L$ which satisfies
$e_{0}(I_{\mathfrak{p}})=e_{0}(L_{\mathfrak{p}})$ for every $ \mathfrak{p} \ \in \text{Min}(R/I)$. In the next result we generalize the notion of coefficient ideals, firstly introduced by Shah in \cite{S}, for a more general case which $I$ is an equimultiple ideal.

\begin{thm} \label{shah'thm_for_equi}\emph{(Existence of the equimultiple coefficients ideals)}
Let $(R,\mathfrak{m})$ be a quasi-unmixed local ring. Assume $R/\mathfrak{m}$ is infinite and $\dim R = d\geq 1$.  Let $I$ be an equimultiple ideal.
Then there exist unique largest ideals $I_{k}$, for $1\leq k\leq s$, containing $I$ such that
\begin{itemize}
  \item [\emph{(1)}] $e_{i}(I_{\mathfrak{p}})=e_{i}((I_{k})_{\mathfrak{p}})$ \ for \ $0 \leq i \leq s \ and  \ every \ \mathfrak{p} \ \in \emph{Min}(R/I)$;
  \item [\emph{(2)}] $I \subseteq I_{s} \subseteq \cdot \cdot \cdot \subseteq I_{1} \subseteq \overline{I}$.
\end{itemize}
\end{thm}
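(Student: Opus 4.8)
The plan is to obtain each $I_k$ as the unique maximal member of the family
$$
\mathcal{F}_k \;=\; \bigl\{\, L \ \big|\ I\subseteq L \ \text{ and } \ e_i(I_{\mathfrak p})=e_i(L_{\mathfrak p}) \ \text{for } 0\le i\le k \text{ and every } \mathfrak p\in\operatorname{Min}(R/I) \,\bigr\}.
$$
First I would isolate two preliminary facts. Putting $i=0$ in the defining condition and invoking B\"oger's Theorem shows that any $L\in\mathcal{F}_k$ satisfies $L\subseteq\overline{I}$; hence every member of $\mathcal{F}_k$ lies between $I$ and $\overline{I}$, and $\mathcal{F}_k\neq\emptyset$ because $I\in\mathcal{F}_k$ trivially. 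Second, for a minimal prime $\mathfrak p$ of $I$ the equimultiplicity $\operatorname{ht}(I)=s$ together with quasi-unmixedness forces $\operatorname{ht}(\mathfrak p)\ge s\ge k$, so $I_{\mathfrak p}$ is primary to the maximal ideal of $R_{\mathfrak p}$ and the coefficients $e_0,\dots,e_k$ appearing in the condition are genuinely defined. This is the point at which localizing at the various $\mathfrak p$ reduces every assertion to the $\mathfrak m$-primary setting of Shah, for which Remarks \ref{rem1} and \ref{rem2} are available in $R_{\mathfrak p}$ (with $\dim R_{\mathfrak p}=\operatorname{ht}(\mathfrak p)$ in place of $d$).

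The decisive step is that $\mathcal{F}_k$ is closed under sums. Because $(L_1+L_2)_{\mathfrak p}=(L_1)_{\mathfrak p}+(L_2)_{\mathfrak p}$ and membership in $\mathcal{F}_k$ is tested one prime at a time, it suffices to prove this after localizing, i.e.\ in the $\mathfrak m$-primary case. There I would argue as follows. By Remark \ref{rem1}, $L\in\mathcal{F}_k$ is equivalent to $\ell\bigl(L^n/I^n\bigr)$ being bounded by a polynomial of degree at most $\dim R_{\mathfrak p}-(k+1)$. Since $L\subseteq\overline{I}$, the Rees algebra $R[Lt]$ is module-finite over $R[It]$, so $N(L):=R[Lt]/R[It]=\bigoplus_n L^n/I^n$ is a finitely generated graded $R[It]$-module whose length function has degree $\dim N(L)-1$. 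The gist is then the containment
$$
\operatorname{Supp}_{R[It]} N(L_1+L_2)\ \subseteq\ \operatorname{Supp}_{R[It]} N(L_1)\ \cup\ \operatorname{Supp}_{R[It]} N(L_2),
$$
which holds because $R[(L_1+L_2)t]=R[L_1t]\cdot R[L_2t]$: localizing at a graded prime $Q$ at which both $R[L_1t]_Q=R[It]_Q$ and $R[L_2t]_Q=R[It]_Q$ forces $R[(L_1+L_2)t]_Q=R[It]_Q$. Consequently $\dim N(L_1+L_2)\le\max\{\dim N(L_1),\dim N(L_2)\}$, and reading this back through Remark \ref{rem1} gives $L_1+L_2\in\mathcal{F}_k$. (Alternatively one may simply cite Shah's sum lemma from \cite{S} in each $R_{\mathfrak p}$.) I expect this degree/support estimate to be the main obstacle: the naive bound coming from $(L_1+L_2)^{2n}\subseteq(L_1^n+L_2^n)\overline{I}^{\,n}$ is far too lossy, so the argument must genuinely pass through the graded module structure rather than a term-by-term length count.

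Granting sum closure, existence of a largest element is immediate: since $R$ is Noetherian the ideal $\sum_{L\in\mathcal{F}_k}L$ is already a finite sum $L_1+\dots+L_m$ of members, hence lies in $\mathcal{F}_k$ by what precedes, and it clearly contains every member; I define $I_k$ to be this ideal. Condition (1) then holds by construction. For the chain (2), note that demanding equality of more coefficients only shrinks the family, so $\mathcal{F}_s\subseteq\cdots\subseteq\mathcal{F}_1$; since $I_k\in\mathcal{F}_k\subseteq\mathcal{F}_{k-1}$ and $I_{k-1}$ is the largest element of $\mathcal{F}_{k-1}$, we obtain $I_k\subseteq I_{k-1}$, and hence $I_s\subseteq\cdots\subseteq I_1$. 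Finally $I\in\mathcal{F}_s$ gives $I\subseteq I_s$, while $I_1\subseteq\overline{I}$ is the already-noted consequence of B\"oger's Theorem, completing $I\subseteq I_s\subseteq\cdots\subseteq I_1\subseteq\overline{I}$.
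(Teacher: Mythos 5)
Your proposal is correct, but it takes a genuinely different route from the paper's proof (which follows Shah's original argument). The paper does not prove closure of $V_k=\mathcal{F}_k$ under sums directly: it picks a maximal element $J\in V_k$ (Noetherianness) and shows every $L\in V_k$ lies in $J$ by adjoining one element at a time. For $x\in L$, B\"oger gives $x\in\overline{I}$, so $I$ is a reduction of $(I,x)$, yielding $(I,x)^{t+1}=(I,x)^{t}I$ and hence the stable relation $(J,x)^{n}=(J,x)^{t}J^{n-t}$; a term-by-term length estimate then bounds $\ell\bigl((J,x)^{n}_{\mathfrak p}/J^{n}_{\mathfrak p}\bigr)$ by a sum of lengths each of polynomial growth of degree at most $\operatorname{ht}(\mathfrak p)-(k+1)$, and Remarks \ref{rem1} and \ref{rem2} finish. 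So your instinct that a naive length count is ``far too lossy'' is right for the sum of two arbitrary members, but the paper circumvents precisely this by the one-element reduction trick, keeping the whole argument elementary (lengths and binomial coefficients only). Your sum-closure argument via $N(L)=R[Lt]/R[It]$ is sound: $L\subseteq\overline{I}$ does make $R[Lt]$ module-finite over $R[It]$, the identity $R[(L_1+L_2)t]=R[L_1t]\cdot R[L_2t]$ does give the support containment, and the resulting dimension bound translates through Remark \ref{rem1} exactly as you say. In a full write-up you should make explicit the one standard fact you use silently: since the graded pieces $L^{n}/I^{n}$ have finite length, $\operatorname{Supp}N(L)\subseteq V(\mathfrak{m}R[It])$, so every prime in a homogeneous prime filtration contracts to $\mathfrak m$ and the Hilbert function is eventually polynomial of degree $\dim N(L)-1$ even though the degree-zero part $R$ of the Rees algebra is not Artinian. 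What your approach buys is structural information the paper's proof does not give directly --- the family is closed under arbitrary finite sums, $I_k$ is literally $\sum_{L\in\mathcal{F}_k}L$, and membership is recast as $\dim N(L)\le\operatorname{ht}(\mathfrak p)-k$, which is the Rees-algebra/blowup perspective of \cite{HJLS} and \cite{C}; what the paper's route buys is self-containedness at the level of Remark \ref{rem1}. One last point of agreement worth noting: condition (1) as printed says $0\le i\le s$, but the paper's own proof (like yours) establishes the intended condition $0\le i\le k$ defining $V_k$, so your reading matches the proof rather than the misprint in the statement.
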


\begin{proof}
Let $s=s(I)$ denote the analytic spread of $I$. By a known Ratliff-Rush theorem we have $s= \dim R_{\mathfrak{p}}$ for any $\mathfrak{p}\in \text{Min}(R/I)$.
For each $k=1,...,s$, consider the set
$$
\begin{array}{c}
  \hspace{-3cm}V_{k}=\{ L \ | \ L \ \text{is an ideal of} \ R \ \text{such that} \ L \supseteq I \ \text{and} \\
   \hspace{2cm}  e_{i}(I_{\mathfrak{p}})=e_{i}(L_{\mathfrak{p}}) \ \text{for} \ \text{every} \ 0 \leq i \leq k \ \text{and} \ \mathfrak{p}\in \text{Min}(R/I) \}
\end{array}
$$
Firstly note that if $L\in V_{k}$ then $e_{i}(I_{\mathfrak{p}})=e_{i}(L_{\mathfrak{p}})$ for every $\mathfrak{p}\in \text{Min}(R/I)$ and in particular, by B\"{o}ger's Theorem, $L\subseteq \overline{I}$.

Since $I\in V_{k}$ and $R$ is Noetherian there exists a maximal element $J \in V_{k}$. We prove $J$ is unique. Let $L \in V_{k}$ and $x\in L$. Since
$I \subseteq (I,x) \subseteq L$, we have by Remark \ref{rem2} that $e_{i}(I_{\mathfrak{p}})=e_{i}((I,x)_{\mathfrak{p}})=e_{i}(J_{\mathfrak{p}})$ for $0\leq i \leq k$ and
$\mathfrak{p} \in \text{Min}(R/I)$. Then $I$ is a reduction of $(I,x)$ so that $(I,x)^{t+1}=(I,x)^{t}I$ for some $t$. So $x^{t+1}\in (I,x)^{t}I\subseteq (J,x)^{t}J$.
Hence, $(J,x)^{t+1}=(J,x)^{t}J$ and then $(J,x)^{n}=(J,x)^{t}J^{n-t}$ for $n\geq t$. Fix $\mathfrak{p} \in \text{Min}(R/I)$. We have
$$
\begin{array}{cl}
 \hspace{-0.2cm} \ell( (J,x)^{n}_{\mathfrak{p}}/ J^{n}_{\mathfrak{p}} ) &\hspace{-0.1cm} =\ell\big( ((J,x)^{t}J^{n-t})_{\mathfrak{p}}/ J^{n}_{\mathfrak{p}} \big)=\ell\big( (J^{n}_{\mathfrak{p}}, \ (J^{n-1}x)_{\mathfrak{p}},..., (J^{n-t}x^{t})_{\mathfrak{p}}) / J^{n}_{\mathfrak{p}} \big) \vspace{0.2cm}\\
    & \hspace{-0.1cm} \leq \sum_{i=1}^{t} \ell \big( (J^{n-i}x^{i})_{\mathfrak{p}}+J^{n}_{\mathfrak{p}}/ J^{n}_{\mathfrak{p}} \big) \vspace{0.2cm} \leq \sum_{i=1}^{t} \ell \big( (J^{n-i}x^{i})_{\mathfrak{p}}+J^{n}_{\mathfrak{p}}/ I^{n}_{\mathfrak{p}} \big) \vspace{0.2cm} \\
    & \hspace{-0.1cm}  \leq \sum_{i=1}^{t} \left[ \ell \big( (J^{n-i}x^{i})_{\mathfrak{p}}+I^{n}_{\mathfrak{p}}/ I^{n}_{\mathfrak{p}} \big) + \ell(J^{n}_{\mathfrak{p}}/I^{n}_{\mathfrak{p}}) \right] \vspace{0.2cm}\\
    & \hspace{-0.1cm} \leq \sum_{i=1}^{t} \left[ \ell \big( (I^{n-i}x^{i})_{\mathfrak{p}}+I^{n}_{\mathfrak{p}}/ I^{n}_{\mathfrak{p}} \big) + \ell \bigg( \frac{(J^{n-i}x^{i})_{\mathfrak{p}}+I^{n}_{\mathfrak{p}} }{(I^{n-i}x^{i})_{\mathfrak{p}}+ I^{n}_{\mathfrak{p}}} \bigg) + \ell(J^{n}_{\mathfrak{p}}/I^{n}_{\mathfrak{p}}) \right] \vspace{0.2cm}\\
    & \hspace{-0.1cm}  \leq \sum_{i=1}^{t} \left[ \ell(J^{n-i}_{\mathfrak{p}}/I^{n-i}_{\mathfrak{p}})+\ell((I,x)^{n}_{\mathfrak{p}}/I^{n}_{\mathfrak{p}})+\ell(J^{n}_{\mathfrak{p}}/I^{n}_{\mathfrak{p}}) \right].
\end{array}
$$
Since $e_{i}(I_{\mathfrak{p}})=e_{i}((I,x)_{\mathfrak{p}})$ and $e_{i}(I_{\mathfrak{p}})=e_{i}(J_{\mathfrak{p}})$ holds for $0\leq i \leq k$ and every $\mathfrak{p} \in \text{Min}(R/I)$ one can conclude by Remark \ref{rem1} that $e_{i}(J_{\mathfrak{p}})=e_{i}((J,x)_{\mathfrak{p}})$ holds for $0\leq i \leq k$ and every $\mathfrak{p} \in \text{Min}(R/I)$. But $J$ is maximal in $V_{k}$, so $L\subseteq J$ and
therefore $J$ is the unique maximal in $V_{k}$. This ideal is denoted by $I_{k}$.
\end{proof}

\begin{defi}
The ideals $I_{k}$ above obtained will be called \emph{equimultiple coefficient ideals}.
\end{defi}

In order to introduce a refined version of the above theorem we define the following.

\begin{defi}
Let $(R,\mathfrak{m})$ be a local ring and $I$ an ideal of $R$. Let $I^{sat}=\bigcup_{n\geq 1}(I: \mathfrak{m}^{n})$ be the saturation of $I$. Then
the ideal $\mathfrak{q}(I)=\overline{I}\cap I^{sat}$ is called of relative integral closure of $I$.
\end{defi}

Below we just introduce a refined version for the above theorem. By \cite[Proposition 4.3]{HPV} and B\"{o}ger's Theorem is also
easy to see that $\mathfrak{q}(I)$ is the unique largest ideal $L$ which satisfies
$e_{0}(I_{\mathfrak{p}})=e_{0}(L_{\mathfrak{p}})$ for every $ \mathfrak{p} \ \in \text{Min}(R/I)$.

\begin{thm} \emph{(Refined equimultiple coefficients ideals)}
Let $(R,\mathfrak{m})$ be a quasi-unmixed local ring. Assume that $R/\mathfrak{m}$ is infinite and $\dim R = d\geq 1$. Let $I$ be an equimultiple ideal. Then there exist unique largest ideals $I_{k}'$ with $\ell(I_{k}'/I)<\infty$, for $1\leq k\leq s$, containing $I$ such that
\begin{itemize}
  \item [\emph{(1)}] $e_{i}(I_{\mathfrak{p}})=e_{i}((I_{k}')_{\mathfrak{p}})$ \ for \ $0 \leq i \leq k \ and  \ every \ \mathfrak{p} \ \in \emph{Min}(R/I)$;
  \item [\emph{(2)}] $I \subseteq I_{s}' \subseteq \cdot \cdot \cdot \subseteq I_{1}' \subseteq \mathfrak{q}(I)$.
\end{itemize}
\end{thm}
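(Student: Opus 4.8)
The plan is to run the argument of Theorem~\ref{shah'thm_for_equi} essentially verbatim, adding the single requirement of finite colength over $I$ to the family of competitors, and then to sharpen the containment in $\overline{I}$ to one in $\mathfrak{q}(I)$ using precisely that requirement. For each $k$ with $1\leq k\leq s$ I would consider the family $V_{k}'$ consisting of those ideals $L$ with $I\subseteq L$, $\ell(L/I)<\infty$, and $e_{i}(I_{\mathfrak{p}})=e_{i}(L_{\mathfrak{p}})$ for $0\leq i\leq k$ and every $\mathfrak{p}\in\text{Min}(R/I)$. Since $I\in V_{k}'$ and $R$ is Noetherian, $V_{k}'$ admits a maximal element $J$, which I would take as $I_{k}'$ once uniqueness is established.

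For uniqueness I would fix $L\in V_{k}'$ and $x\in L$ and show $(J,x)\in V_{k}'$; maximality then forces $x\in J$, hence $L\subseteq J$. The equalities $e_{i}(J_{\mathfrak{p}})=e_{i}((J,x)_{\mathfrak{p}})$ for $0\leq i\leq k$ come from exactly the chain of length estimates already performed in the proof of Theorem~\ref{shah'thm_for_equi}: the reduction relation $(I,x)^{t+1}=(I,x)^{t}I$ yields $(J,x)^{t+1}=(J,x)^{t}J$, and the resulting bound on $\ell((J,x)^{n}_{\mathfrak{p}}/J^{n}_{\mathfrak{p}})$ followed by Remark~\ref{rem1} gives the coefficient equalities. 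Nothing in that computation changes. The one genuinely new point is that this operation must not leave the finite-colength world: since $\ell(J/I)<\infty$ and $\ell(L/I)<\infty$ we have $J\subseteq I^{sat}$ and $x\in L\subseteq I^{sat}$, so $(J,x)\subseteq I^{sat}$ and hence $\ell((J,x)/I)\leq\ell(I^{sat}/I)<\infty$. Thus $(J,x)\in V_{k}'$ and uniqueness follows.

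Part~(2) is where finite colength is used. The elementary fact behind it is the equivalence $\ell(L/I)<\infty \iff L\subseteq I^{sat}$: if $L/I$ has finite length it is annihilated by some $\mathfrak{m}^{n}$, so $\mathfrak{m}^{n}L\subseteq I$ gives $L\subseteq (I:\mathfrak{m}^{n})\subseteq I^{sat}$, while conversely $I^{sat}/I=H^{0}_{\mathfrak{m}}(R/I)$ is a finitely generated $\mathfrak{m}$-power-torsion module and therefore of finite length, so every $L\subseteq I^{sat}$ already has $\ell(L/I)<\infty$. Now for $L\in V_{k}'$ the case $i=0$ of (1) together with B\"{o}ger's Theorem gives $L\subseteq\overline{I}$, and finite colength gives $L\subseteq I^{sat}$; hence $L\subseteq\overline{I}\cap I^{sat}=\mathfrak{q}(I)$, and in particular $I_{k}'\subseteq\mathfrak{q}(I)$. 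The nesting $I_{s}'\subseteq\cdots\subseteq I_{1}'$ is automatic, since demanding the coefficient equalities up to $i=k+1$ is stronger than up to $i=k$, so $V_{k+1}'\subseteq V_{k}'$ and the maximal element $I_{k+1}'$ lies in $V_{k}'$, forcing $I_{k+1}'\subseteq I_{k}'$; and $I\in V_{s}'$ gives $I\subseteq I_{s}'$.

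Rather than a deep obstacle, the point needing care is this stability of the finite-colength condition under the maximal-element construction, that is, the verification $(J,x)\subseteq I^{sat}$, because the multiplicity estimate itself is inherited unchanged from Theorem~\ref{shah'thm_for_equi}. As a cross-check, the construction can be made explicit: the ideal $I_{k}\cap I^{sat}$ lies between $I$ and $I_{k}\subseteq\overline{I}$, so after localizing at each minimal prime Remark~\ref{rem2} shows it satisfies (1), while $I_{k}\cap I^{sat}\subseteq I^{sat}$ makes its colength finite; and every $L\in V_{k}'$ lies in $I_{k}$ (as it satisfies (1)) and in $I^{sat}$, hence in $I_{k}\cap I^{sat}$. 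This identifies $I_{k}'=I_{k}\cap I^{sat}$ and re-proves existence, uniqueness, and (2) at once.
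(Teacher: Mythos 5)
Your proposal is correct and takes essentially the same approach as the paper: the paper's entire proof is the single sentence that one requires the elements $L$ of the sets $V_{k}$ from Theorem \ref{shah'thm_for_equi} to additionally satisfy $\ell(L/I)<\infty$, which is exactly your plan. Your verification that finite colength is stable under the maximality argument (via the equivalence $\ell(L/I)<\infty$ if and only if $L\subseteq I^{sat}$) and your closing identification $I_{k}'=I_{k}\cap I^{sat}$ correctly fill in the details the paper leaves implicit.
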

\begin{proof}
It suffices to require that the elements $L$ in the above sets $V_{k}$ are also such that $\ell(L/I)<\infty$.
\end{proof}

Let $I^{*}=\bigcup_{n\geq 1}(I^{n+1}:I^{n})$ be the Ratliff-Rush ideal. It is known that $I^{*}$ is the largest ideal $L$ which satisfies $L^{n}=I^{n}$ for large $n$.
Moreover, by localizing at each $\mathfrak{p} \in \text{Min} (R/I)$ we have $e_{i}(I_{\mathfrak{p}})=e_{i}((I^{*})_{\mathfrak{p}})$
for $1\leq i\leq s$ and every $\mathfrak{p}$ minimal prime of $I$.
By the above theorem one has $I^{*} \subseteq I_{s}$.

An ideal $I$ is said to be a Ratliff-Rush ideal if $I^{*}=I$.
\begin{cor}\label{cor_equi}
Assume the hypothesis of Theorem \ref{shah'thm_for_equi}. Then
$$
\begin{array}{rl}
  I \subseteq J \subseteq I_{k} \subseteq \overline{I}\hspace{-0.2cm} & \text{ iff } \hspace{0.2cm}I \subseteq J, \ \text{and }  e_{i}(I_{\mathfrak{p}})=e_{i}(J_{\mathfrak{p}}) \text{ for } 1\leq i\leq s \\
   & \hspace{0.8cm}\text{ and every } \mathfrak{p} \in \emph{Min}(R/I).
\end{array}
$$
\end{cor}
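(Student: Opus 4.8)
The plan is to deduce this directly from the construction of $I_{k}$ in Theorem \ref{shah'thm_for_equi}, where $I_{k}$ is realized as the unique maximal element of the set $V_{k}$ and, more importantly, where it is shown that \emph{every} member of $V_{k}$ is contained in that maximal element. The two implications of the corollary match the two halves of that construction: passing the coefficient equalities downward along the chain $I\subseteq J\subseteq I_{k}$, and recognizing an ideal carrying the prescribed coefficients as an element of $V_{k}$.

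First I would localize, so as to bring Remarks \ref{rem1} and \ref{rem2} into play. For each $\mathfrak{p}\in\text{Min}(R/I)$ the localization $I_{\mathfrak{p}}$ is $\mathfrak{p}R_{\mathfrak{p}}$-primary, and by the Ratliff--Rush theorem quoted in the proof of Theorem \ref{shah'thm_for_equi} the ring $R_{\mathfrak{p}}$ has dimension $s$. Thus after localizing we are exactly in the $\mathfrak{m}$-primary situation in which these remarks operate on the Hilbert coefficients $e_{i}(I_{\mathfrak{p}})$.

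For the forward direction, assume $I\subseteq J\subseteq I_{k}\subseteq\overline{I}$. Since $I_{k}\in V_{k}$ we have $e_{i}(I_{\mathfrak{p}})=e_{i}((I_{k})_{\mathfrak{p}})$ for $0\leq i\leq k$ and every minimal prime $\mathfrak{p}$. Applying Remark \ref{rem2} to the localized chain $I_{\mathfrak{p}}\subseteq J_{\mathfrak{p}}\subseteq (I_{k})_{\mathfrak{p}}$ then forces $e_{i}(I_{\mathfrak{p}})=e_{i}(J_{\mathfrak{p}})$ throughout that range, which in particular yields the asserted equalities for the positive indices.

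For the reverse direction, suppose $I\subseteq J\subseteq\overline{I}$ together with the coefficient equalities. The one point that needs attention is the leading coefficient: the equalities being used run over the positive indices, so to place $J$ in $V_{k}$ I must separately recover $e_{0}(I_{\mathfrak{p}})=e_{0}(J_{\mathfrak{p}})$, which is precisely what B\"{o}ger's Theorem supplies from $J\subseteq\overline{I}$. With the range upgraded to $0\leq i\leq k$ we have $J\in V_{k}$, and the uniqueness argument of Theorem \ref{shah'thm_for_equi} gives $J\subseteq I_{k}$; the inclusion $I_{k}\subseteq\overline{I}$ is again part of that theorem. I expect this $e_{0}$/integral-closure bridge to be the only delicate step, everything else being routine bookkeeping once one has passed to $R_{\mathfrak{p}}$.
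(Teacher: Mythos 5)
Your argument is correct in substance and coincides with what the paper leaves implicit: Corollary \ref{cor_equi} carries no proof in the paper, being read off directly from the construction in Theorem \ref{shah'thm_for_equi} --- membership in the set $V_{k}$ and maximality of $I_{k}$ --- with Remark \ref{rem2}, localized at each $\mathfrak{p}\in\mathrm{Min}(R/I)$, giving the forward direction and B\"{o}ger's theorem supplying the $e_{0}$ equality in the reverse. Your observation that $R_{\mathfrak{p}}$ has dimension $s$ at every minimal prime (via the Ratliff--Rush theorem quoted in the paper), so that the $\mathfrak{m}$-primary Remarks \ref{rem1} and \ref{rem2} apply after localization, is exactly the mechanism the paper relies on.

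One caveat: you are, rightly, proving a corrected statement rather than the literal one, and you should flag this rather than paper over it. As printed, the right-hand side of the corollary reads $e_{i}(I_{\mathfrak{p}})=e_{i}(J_{\mathfrak{p}})$ for $1\leq i\leq s$, and with that range both directions misfire. Forward: from $J\subseteq I_{k}$, Remark \ref{rem2} controls $e_{i}$ only for $0\leq i\leq k$, so your phrase ``in particular yields the asserted equalities for the positive indices'' is not valid when $k<s$ (take $J=I_{k}$ itself: the coefficients $e_{i}$ for $k<i\leq s$ need not match). Reverse: without $e_{0}$ --- equivalently, without $J\subseteq\overline{I}$, which you quietly added to the hypothesis but which is absent from the printed right-hand side --- the implication is simply false: in $R=K[[x]]$ with $K$ infinite, $I=(x^{2})$, $J=(x)$, one has $e_{1}(I)=e_{1}(J)=0$ and $I\subseteq J$, yet $J\not\subseteq\overline{I}=(x^{2})$. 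So the intended statement has range $0\leq i\leq k$ (this is precisely how the corollary is invoked in the proof of Proposition \ref{equality_k}), and your B\"{o}ger bridge recovering $e_{0}$ from $J\subseteq\overline{I}$ is the right repair for the reverse direction; in the forward direction replace $s$ by $k$. With those two adjustments your proof is complete and is the intended one.
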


\begin{cor}\label{rat}
Assume the hypothesis of Theorem \ref{shah'thm_for_equi}. All the coefficient ideals $I_{k}$ are Ratliff-Rush ideals.
\end{cor}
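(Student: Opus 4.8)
The plan is to exploit the maximality of $I_{k}$ in the family $V_{k}$ built in the proof of Theorem \ref{shah'thm_for_equi}, together with the fact that the Ratliff-Rush operation does not alter any Hilbert coefficient after localizing at a minimal prime. Since the inclusion $I_{k}\subseteq (I_{k})^{*}$ always holds, it suffices to prove the reverse inclusion $(I_{k})^{*}\subseteq I_{k}$, and for this I would show that $(I_{k})^{*}$ itself belongs to $V_{k}$.

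First I would record that $I\subseteq I_{k}\subseteq (I_{k})^{*}$, so $(I_{k})^{*}$ contains $I$, which is the first membership requirement for $V_{k}$. Next, by the quoted characterization of the Ratliff-Rush ideal applied to $I_{k}$, one has $((I_{k})^{*})^{n}=(I_{k})^{n}$ for all large $n$. Localization is exact and commutes with taking powers, so for every $\mathfrak{p}\in \text{Min}(R/I)$ we get $(((I_{k})^{*})_{\mathfrak{p}})^{n}=((I_{k})_{\mathfrak{p}})^{n}$ for all large $n$; here both ideals are $\mathfrak{p}R_{\mathfrak{p}}$-primary because $\mathfrak{p}$ is minimal over $I$.

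The key step is then to convert this coincidence of large powers into a coincidence of Hilbert coefficients. Applying the length formula of Remark \ref{rem1} inside the local ring $R_{\mathfrak{p}}$ to the inclusion $(I_{k})_{\mathfrak{p}}\subseteq ((I_{k})^{*})_{\mathfrak{p}}$, the function $\ell\big(((I_{k})^{*})^{n}_{\mathfrak{p}}/(I_{k})^{n}_{\mathfrak{p}}\big)$ agrees for large $n$ with a polynomial whose coefficients are the differences $e_{i}((I_{k})_{\mathfrak{p}})-e_{i}(((I_{k})^{*})_{\mathfrak{p}})$. Since this length is identically zero for large $n$, the polynomial vanishes and hence $e_{i}(((I_{k})^{*})_{\mathfrak{p}})=e_{i}((I_{k})_{\mathfrak{p}})$ for every $i$ and every $\mathfrak{p}\in \text{Min}(R/I)$. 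Combining with $e_{i}((I_{k})_{\mathfrak{p}})=e_{i}(I_{\mathfrak{p}})$ for $0\leq i\leq k$, which holds because $I_{k}\in V_{k}$, yields $e_{i}(((I_{k})^{*})_{\mathfrak{p}})=e_{i}(I_{\mathfrak{p}})$ for $0\leq i\leq k$ and every minimal prime. Thus $(I_{k})^{*}\in V_{k}$.

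Finally, since $I_{k}$ is by construction the unique maximal element of $V_{k}$, the membership $(I_{k})^{*}\in V_{k}$ forces $(I_{k})^{*}\subseteq I_{k}$. Together with the automatic inclusion $I_{k}\subseteq (I_{k})^{*}$ this gives $(I_{k})^{*}=I_{k}$, i.e. $I_{k}$ is a Ratliff-Rush ideal. I expect the only delicate point to be the passage through localization in the middle step: one must be sure that $\mathfrak{p}$ being minimal over $I$ makes the localized ideals primary to the maximal ideal of $R_{\mathfrak{p}}$, so that the Hilbert-coefficient machinery and Remark \ref{rem1} genuinely apply; everything else is formal.
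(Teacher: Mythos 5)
Your proposal is correct and follows essentially the same route as the paper's own proof: the paper likewise notes that $(I_{k})_{\mathfrak{p}}^{n}=((I_{k})^{*})_{\mathfrak{p}}^{n}$ for $n\gg 0$, deduces $e_{i}((I_{k})_{\mathfrak{p}})=e_{i}(((I_{k})^{*})_{\mathfrak{p}})$ at every minimal prime, and concludes $(I_{k})^{*}\subseteq I_{k}$ by maximality of $I_{k}$ in $V_{k}$. Your version merely spells out the intermediate steps (localization commuting with powers, the appeal to Remark \ref{rem1}) that the paper leaves implicit.
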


\begin{proof}
We know that $(I_{k})_{\mathfrak{p}}^{n}=((I_{k})^{*})_{\mathfrak{p}}^{n}$ for $n\gg 0$ and any prime $\mathfrak{p}$. In particular,
$e_{i}((I_{k})_{\mathfrak{p}})=e_{i}(((I_{k})^{*})_{\mathfrak{p}})$ for $0\leq i \leq s$ and any $\mathfrak{p}\in \text{Min}(R/I)$. So by maximality of $I_{k}$, we have $(I_{k})^{*}\subseteq I_{k}$.
\end{proof}

\begin{nota}
Given an ideal $J \subseteq R$, let $J^{u}$ denote the unmixed part of $J$.
\end{nota}

The next result shows that the coefficient ideal $I_{s}$ is an unmixed ideal if $I$ contains a regular element. Later, we will see that in fact $I_{s}$
is unmixed anyway (see Theorem \ref{coef_unmixed}).

\begin{prop}
Assume the setup of Theorem \ref{shah'thm_for_equi} with $I$ containing a regular element. Then $I_{s}=(I^{*})^{u}$. In particular, $I_{s}$ is an unmixed ideal.
\end{prop}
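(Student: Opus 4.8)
The plan is to prove the two inclusions $I_{s}\subseteq (I^{*})^{u}$ and $(I^{*})^{u}\subseteq I_{s}$ separately, reducing each to a computation at the minimal primes of $I$. First I would record the local picture. Since $I$ is equimultiple, the Ratliff theorem quoted in the proof of Theorem \ref{shah'thm_for_equi} gives $s=\dim R_{\mathfrak{p}}$ for every $\mathfrak{p}\in\mathrm{Min}(R/I)$, and as $R$ is quasi-unmixed this equals $\mathrm{ht}\,\mathfrak{p}$; hence every minimal prime of $I$ has height $s$. Because $I\subseteq I^{*}\subseteq\overline{I}$ have the same radical, $\mathrm{Min}(R/I^{*})=\mathrm{Min}(R/I)$, so the unmixed part is $(I^{*})^{u}=\bigcap_{\mathfrak{p}\in\mathrm{Min}(R/I)}\big(I^{*}R_{\mathfrak{p}}\cap R\big)$, which is unmixed by construction; this will give the ``in particular'' at once, once the equality is proved. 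For a minimal prime $\mathfrak{p}$, localizing removes every primary component of $I^{*}$ except the $\mathfrak{p}$-primary one, so $((I^{*})^{u})_{\mathfrak{p}}=(I^{*})_{\mathfrak{p}}$.

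For $(I^{*})^{u}\subseteq I_{s}$ I would verify that $(I^{*})^{u}$ lies in the set $V_{s}$ from the proof of Theorem \ref{shah'thm_for_equi}. Indeed $(I^{*})^{u}\supseteq I^{*}\supseteq I$, and for each $\mathfrak{p}\in\mathrm{Min}(R/I)$ the previous paragraph gives $((I^{*})^{u})_{\mathfrak{p}}=(I^{*})_{\mathfrak{p}}$, whence $e_{i}(((I^{*})^{u})_{\mathfrak{p}})=e_{i}((I^{*})_{\mathfrak{p}})=e_{i}(I_{\mathfrak{p}})$ for $0\le i\le s$ (the last equality being the one recorded just before this proposition, valid because $(I^{*})^{n}=I^{n}$ for $n\gg 0$ localizes to each $\mathfrak{p}$). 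Thus $(I^{*})^{u}\in V_{s}$, and maximality of $I_{s}$ in $V_{s}$ forces $(I^{*})^{u}\subseteq I_{s}$.

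For the reverse inclusion it suffices, since $(I^{*})^{u}=\bigcap_{\mathfrak{p}}(I^{*}R_{\mathfrak{p}}\cap R)$ with each component $\mathfrak{p}$-primary, to show $(I_{s})_{\mathfrak{p}}\subseteq (I^{*})_{\mathfrak{p}}$ for every $\mathfrak{p}\in\mathrm{Min}(R/I)$. The key point is that the Ratliff--Rush operation commutes with localization: the chain $(I^{n+1}:I^{n})$ is increasing and stabilizes to $I^{*}$ for $n\gg 0$, and since $I^{n}$ is finitely generated the colon commutes with localization, so $(I^{*})_{\mathfrak{p}}=(I_{\mathfrak{p}}^{n+1}:I_{\mathfrak{p}}^{n})=(I_{\mathfrak{p}})^{*}$ for $n\gg 0$. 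On the other hand, in the $s$-dimensional ring $R_{\mathfrak{p}}$ the $\mathfrak{p}R_{\mathfrak{p}}$-primary ideals $I_{\mathfrak{p}}\subseteq (I_{s})_{\mathfrak{p}}$ share all of $e_{0},\dots,e_{s}$, so their Hilbert--Samuel polynomials coincide and $\ell((I_{s})_{\mathfrak{p}}^{n}/I_{\mathfrak{p}}^{n})=0$ for $n\gg 0$; hence $(I_{s})_{\mathfrak{p}}^{n}=I_{\mathfrak{p}}^{n}$ for $n\gg 0$. As $I$ contains a regular element, so does $I_{\mathfrak{p}}$ (it avoids every associated prime of $R_{\mathfrak{p}}$ and is a nonunit since $I_{\mathfrak{p}}$ is proper), and the characterization of $(I_{\mathfrak{p}})^{*}$ as the largest ideal with this power-stability property (equivalently, the equality of the Ratliff--Rush closure of $I_{\mathfrak{p}}$ with its top coefficient ideal) gives $(I_{s})_{\mathfrak{p}}\subseteq (I_{\mathfrak{p}})^{*}=(I^{*})_{\mathfrak{p}}$, as required.

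Combining the two inclusions yields $I_{s}=(I^{*})^{u}$, and since $(I^{*})^{u}$ is unmixed by definition, so is $I_{s}$. I expect the main obstacle to be the localization step $(I^{*})_{\mathfrak{p}}=(I_{\mathfrak{p}})^{*}$: one must argue carefully that the stabilized colon ideal localizes correctly, and it is precisely the regular-element hypothesis that legitimizes both the global description of $I^{*}$ as the largest ideal with $L^{n}=I^{n}$ for $n\gg 0$ and its local counterpart at $\mathfrak{p}$. Everything else amounts to bookkeeping of primary components and Hilbert coefficients at the height-$s$ minimal primes.
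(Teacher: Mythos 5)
Your proof is correct and takes essentially the same route as the paper: both inclusions are verified at the minimal primes of $I$, with $(I^{*})^{u}\subseteq I_{s}$ following from $(I^{*})^{n}=I^{n}$ for $n\gg 0$ (hence equal $e_{0},\dots,e_{s}$ locally) and maximality of $I_{s}$, and $I_{s}\subseteq (I^{*})^{u}$ following from $(I_{s})_{\mathfrak{p}}\subseteq (I_{\mathfrak{p}})^{*}=(I^{*})_{\mathfrak{p}}$ together with the primary decomposition of the unmixed part. The only difference is that you make explicit two steps the paper uses silently, namely that the Ratliff--Rush closure commutes with localization and that equality of all coefficients $e_{0},\dots,e_{s}$ in the $s$-dimensional ring $R_{\mathfrak{p}}$ forces $(I_{s})_{\mathfrak{p}}^{n}=I_{\mathfrak{p}}^{n}$ for $n\gg 0$, which is a welcome clarification rather than a departure.
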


\begin{proof}
By simplicity of notation, let $J=(I^{*})^{u}$ denote the unmixed part of the Ratliff-Rush closure $I^{*}$. We have $\text{Min}(R/I)=\text{Min}(R/I^{*})=\text{Min}(R/J)$.
Because of first condition in Theorem \ref{shah'thm_for_equi}, we have $(I_{s})_{\mathfrak{p}} \subseteq (I_{\mathfrak{p}})^{*}=I^{*}_{\mathfrak{p}}$ for every $\mathfrak{p}\in \text{Min}(R/I)$.
But $(I_{s})_{\mathfrak{p}} \subseteq I^{*}_{\mathfrak{p}}= J_{\mathfrak{p}}$ for any $\mathfrak{p} \in \text{Min}(R/J)=\text{Ass}(R/J)$. Therefore, $I_{s}\subseteq J$.

Now note that by a property of Ratliff-Rush closure, one has $I_{\mathfrak{p}}^{n}=(I_{\mathfrak{p}}^{*})^{n}=J_{\mathfrak{p}}^{n}$ for large $n$ and any $\mathfrak{p}\in \text{Min}(R/I)$. Thus, $e_{i}(I_{\mathfrak{p}})=e_{i}(J_{\mathfrak{p}})$ for
$0\leq i \leq s$ and any $\mathfrak{p}\in \text{Min}(R/I)$, so that $J \subseteq I_{s}$ by maximality of $I_{s}$.
\end{proof}

\begin{prop}\label{equality_k}
Assume the setup of Theorem \ref{shah'thm_for_equi} and let $J\supseteq I$ be an equimultiple ideal. Then
\begin{itemize}
  \item [\emph{(1)}] if $J\subseteq I_{k}$ then $I_{k}=J_{k}$.
  \item [\emph{(2)}] if there exists one positive integer $m$ such that $J^{m} \subseteq (I^{m})_{k}$
  then $J^{n} \subseteq (I^{n})_{k}$ for all positive integer $n$.
  \item [\emph{(3)}] $(((I^{m})_{k})^{n})_{k} = (I^{mn})_{k}$ for all positive integer $m,n$.
\end{itemize}

\end{prop}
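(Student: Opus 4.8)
The plan is to reduce everything to one elementary fact about Hilbert coefficients under powers and then invoke the maximality that defines the coefficient ideals, exactly as in the proof of Theorem~\ref{shah'thm_for_equi}. Throughout I would localize at a minimal prime: for $\mathfrak{p}\in\text{Min}(R/I)$ the ideal $I_{\mathfrak{p}}$ is $\mathfrak{p}R_{\mathfrak{p}}$-primary with $\dim R_{\mathfrak{p}}=s$, and in each of the three parts the relevant hypothesis forces the larger ideal into $\overline{I}$, so all radicals agree and $\text{Min}(R/I)=\text{Min}(R/J)=\text{Min}(R/I^{n})=\text{Min}(R/J^{n})$. The fact I would first establish is that, for any two $\mathfrak{p}R_{\mathfrak{p}}$-primary ideals $\mathfrak{a}\subseteq\mathfrak{b}$ of $R_{\mathfrak{p}}$ and any $m\geq 1$,
$$e_{i}(\mathfrak{a})=e_{i}(\mathfrak{b})\ (0\leq i\leq k)\quad\Longleftrightarrow\quad e_{i}(\mathfrak{a}^{m})=e_{i}(\mathfrak{b}^{m})\ (0\leq i\leq k).$$
This rests on the identity $P_{\mathfrak{a}^{m}}(n)=P_{\mathfrak{a}}(mn)$ of Hilbert--Samuel polynomials (both compute $\ell(R_{\mathfrak{p}}/\mathfrak{a}^{mn})$ for $n\gg 0$): expanding each $\binom{mn+s-1-j}{s-j}$ in the basis $\big\{\binom{n+s-1-i}{s-i}\big\}_{i}$ shows that $(e_{0}(\mathfrak{a}^{m}),\dots,e_{k}(\mathfrak{a}^{m}))$ is a lower-triangular integer transform of $(e_{0}(\mathfrak{a}),\dots,e_{k}(\mathfrak{a}))$ whose $i$-th diagonal entry is $m^{s-i}\neq 0$; since this transform depends only on $m$ and $s$ and is invertible, both implications follow. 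I expect verifying this triangular, nonsingular dependence to be the main obstacle; once it is in hand the three parts are short.

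For (1), since $I\subseteq J\subseteq I_{k}\subseteq\overline{I}$, Remark~\ref{rem2} applied in each $R_{\mathfrak{p}}$ gives $e_{i}(I_{\mathfrak{p}})=e_{i}(J_{\mathfrak{p}})=e_{i}((I_{k})_{\mathfrak{p}})$ for $0\leq i\leq k$. Hence $I_{k}$ belongs to the set $V_{k}$ attached to $J$, so $I_{k}\subseteq J_{k}$ by maximality. Conversely $J\subseteq J_{k}\subseteq\overline{J}=\overline{I}$ and $e_{i}(I_{\mathfrak{p}})=e_{i}(J_{\mathfrak{p}})=e_{i}((J_{k})_{\mathfrak{p}})$, so $J_{k}$ belongs to the set $V_{k}$ attached to $I$ and $J_{k}\subseteq I_{k}$. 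Thus $I_{k}=J_{k}$.

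For (2), from $J^{m}\subseteq(I^{m})_{k}\subseteq\overline{I^{m}}$ I would first deduce $J\subseteq\overline{I}$ (an element $x\in J$ has $x^{m}\in\overline{I^{m}}$, and unwinding a monic dependence relation gives $x\in\overline{I}$). Remark~\ref{rem2} applied to $I^{m}\subseteq J^{m}\subseteq(I^{m})_{k}$ yields $e_{i}((I^{m})_{\mathfrak{p}})=e_{i}((J^{m})_{\mathfrak{p}})$ for $0\leq i\leq k$. By the displayed equivalence used with exponent $m$ (backwards) this is the same as $e_{i}(I_{\mathfrak{p}})=e_{i}(J_{\mathfrak{p}})$ for $0\leq i\leq k$, and then used with exponent $n$ (forwards) the same as $e_{i}((I^{n})_{\mathfrak{p}})=e_{i}((J^{n})_{\mathfrak{p}})$ for $0\leq i\leq k$. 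As $I^{n}\subseteq J^{n}$ too, $J^{n}$ belongs to the set $V_{k}$ attached to $I^{n}$, so $J^{n}\subseteq(I^{n})_{k}$ by maximality.

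For (3), put $A=(I^{m})_{k}$, so that $I^{m}\subseteq A\subseteq\overline{I^{m}}$, the ideal $A$ is equimultiple, and $e_{i}((I^{m})_{\mathfrak{p}})=e_{i}(A_{\mathfrak{p}})$ for $0\leq i\leq k$. Raising to the $n$-th power through the displayed equivalence (and using $(I^{m})^{n}=I^{mn}$) gives $e_{i}((I^{mn})_{\mathfrak{p}})=e_{i}((A^{n})_{\mathfrak{p}})$ for $0\leq i\leq k$, while $I^{mn}\subseteq A^{n}\subseteq\overline{I^{mn}}$. Hence $A^{n}$ lies in the set $V_{k}$ attached to $I^{mn}$ and $A^{n}\subseteq(I^{mn})_{k}$. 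Since $A^{n}\supseteq I^{mn}$ is equimultiple, Part (1) with base ideal $I^{mn}$ and $J=A^{n}$ gives $(A^{n})_{k}=(I^{mn})_{k}$, that is, $(((I^{m})_{k})^{n})_{k}=(I^{mn})_{k}$.
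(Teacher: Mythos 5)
Your proof is correct, and structurally it mirrors the paper: localize at the (common) minimal primes, reduce to the $\mathfrak{m}$-primary case, and finish with the maximality defining the coefficient ideals. Part (1) is essentially the paper's argument (the paper phrases your appeal to Remark \ref{rem2} as the bound $\ell(J^{n}_{\mathfrak{p}}/I^{n}_{\mathfrak{p}})\leq \ell((I_{k})^{n}_{\mathfrak{p}}/I^{n}_{\mathfrak{p}})$, the latter being for large $n$ a polynomial of degree at most $s-(k+1)$ by Remark \ref{rem1}, and leaves the double inclusion by maximality implicit). The genuine divergence is in part (2): the paper does not prove the transfer of coefficient equalities across powers but outsources it, citing \cite[Proposition 3.2]{HJLS} for the step $I^{m}_{\mathfrak{p}}\subseteq J^{m}_{\mathfrak{p}}\subseteq (I^{m}_{\mathfrak{p}})_{k}\Rightarrow I^{n}_{\mathfrak{p}}\subseteq J^{n}_{\mathfrak{p}}\subseteq (I^{n}_{\mathfrak{p}})_{k}$ for all $n$, and then declares item (3) ``a combination of the two previous items''. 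You replace that citation by a self-contained numerical lemma: the identity $P_{\mathfrak{a}^{m}}(n)=P_{\mathfrak{a}}(mn)$ exhibits $(e_{0}(\mathfrak{a}^{m}),\dots,e_{k}(\mathfrak{a}^{m}))$ as a lower-triangular transform of $(e_{0}(\mathfrak{a}),\dots,e_{k}(\mathfrak{a}))$ with diagonal entries $m^{s-i}$, depending only on $m$ and $s$, and the invertibility of that transform is exactly the ``backwards'' direction that the HJLS citation buys the paper; your verification of the triangular dependence is sound. What your route gains is transparency and independence from \cite{HJLS}; what it costs is a handful of routine checks, which you mostly supply: that $x^{m}\in\overline{I^{m}}$ forces $x\in\overline{I}$ (so all the relevant sets of minimal primes coincide and the localized ideals are primary), and --- asserted but not verified --- that $(I^{m})_{k}$ and its powers $A^{n}$ are equimultiple; this last point is immediate since $I^{mn}\subseteq A^{n}\subseteq\overline{I^{mn}}$ makes $I^{mn}$ a reduction of $A^{n}$, so height and analytic spread both equal $s$, and it is needed to legitimize invoking part (1) in your proof of (3). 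Your part (3) is the same combination the paper intends, with the inclusion $A^{n}\subseteq (I^{mn})_{k}$ obtained directly from your lemma rather than from part (2) applied with exponent one.
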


\begin{proof}
Fix $k$. For item $\text{(1)}$, it suffices to use $\ell(R/I_{\mathfrak{p}}^{n})- \ell(R/J_{\mathfrak{p}}^{n})\leq \ell(R/I_{\mathfrak{p}}^{n})- \ell(R/(I_{k})_{\mathfrak{p}}^{n})$ for each prime $\mathfrak{p}\in \text{Min}(R/I)$ and the fact that the last term is, for large $n$, a polynomial of degree at most $s-(k+1)$.

Now we show $\text{(2)}$.
We have $e_{i}(I^{m}_{\mathfrak{p}})=e_{i}(J^{m}_{\mathfrak{p}})$ for $0\leq i \leq k$ and every $\mathfrak{p}\in \text{Min}(R/I^{m})$, by Corollary \ref{cor_equi}. By using coefficients ideals
for primary case, we have, for each minimal prime $\mathfrak{p}$, that
$I^{m}_{\mathfrak{p}} \subseteq J^{m}_{\mathfrak{p}} \subseteq (I^{m}_{\mathfrak{p}})_{k}$ for $0\leq i \leq k$.
By \cite[Proposition 3.2]{HJLS}, $I^{n}_{\mathfrak{p}} \subseteq J^{n}_{\mathfrak{p}} \subseteq (I^{n}_{\mathfrak{p}})_{k}$ for all $n$, so that
for each minimal prime $\mathfrak{p}$, we have $e_{i}(I^{n}_{\mathfrak{p}})=e_{i}(J^{n}_{\mathfrak{p}})$ for $0\leq i \leq k$ and all $n$. Corollary \ref{cor_equi}
gives then $J^{n}\subseteq (I^{n})_{k}$ for all $n$.

Item $\text{(3)}$ is a combination of the two previous items.
\end{proof}

\begin{rem}
If $I$ contains a regular element, by Proposition \ref{equality_k} and Corollary \ref{rat} we have $(I^{*})_{k}=I_{k}=(I_{k})^{*}$, since $(I^{*})_{k}=I_{k}$.
In particular, the unmixed part of a Ratliff-Rush ideal is also a Ratliff-Rush ideal.
\end{rem}

\begin{thm}\label{coef_unmixed}
Assume the setup of Theorem \ref{shah'thm_for_equi}. The coefficient ideals of $I^{u}$ are $(I^{u})_{k}=(I_{k})^{u}$, and all
the coefficient ideals of $I$ are unmixed ideals.
\end{thm}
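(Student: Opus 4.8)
The plan is to deduce both statements from one observation about how the unmixed part behaves under localization at a minimal prime. Throughout, since $I \subseteq I_{k} \subseteq \overline{I}$ by Theorem~\ref{shah'thm_for_equi}, the ideals $I$, $I_{k}$ and $\overline{I}$ share a radical, so $\mathrm{Min}(R/I) = \mathrm{Min}(R/I_{k})$; I would fix this common finite set and call it $\Lambda$ (every member has height $s$ by quasi-unmixedness and equimultiplicity). The key point is that for any ideal $J$ with $\mathrm{Min}(R/J)=\Lambda$ and any $\mathfrak{p}\in\Lambda$ one has $(J^{u})_{\mathfrak{p}} = J_{\mathfrak{p}}$: localizing at a minimal prime $\mathfrak{p}$ sends to $R_{\mathfrak{p}}$ every primary component whose radical is not contained in $\mathfrak{p}$ --- that is, every other minimal component and every embedded component --- so only the $\mathfrak{p}$-primary component survives, both in $J$ and in $J^{u}$. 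Hence $e_{i}((J^{u})_{\mathfrak{p}}) = e_{i}(J_{\mathfrak{p}})$ for all $i$ and all $\mathfrak{p}\in\Lambda$, while $J \subseteq J^{u}$ always. Applying this to $J=I$ gives $e_{0}(I_{\mathfrak{p}}) = e_{0}((I^{u})_{\mathfrak{p}})$ on $\Lambda$, so B\"{o}ger's Theorem (with $I$ equimultiple and $I\subseteq I^{u}$) forces $I^{u}\subseteq\overline{I}$; thus $I\subseteq I^{u}\subseteq\overline{I}$, the ideal $I^{u}$ is equimultiple of analytic spread $s$, and Theorem~\ref{shah'thm_for_equi} applies to it.

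For unmixedness of $I_{k}$, I would show $(I_{k})^{u}$ lies in the set $V_{k}$ used to construct $I_{k}$ in the proof of Theorem~\ref{shah'thm_for_equi}. Indeed $(I_{k})^{u}\supseteq I_{k}\supseteq I$, and for each $\mathfrak{p}\in\Lambda$ the localization identity gives $e_{i}(((I_{k})^{u})_{\mathfrak{p}}) = e_{i}((I_{k})_{\mathfrak{p}}) = e_{i}(I_{\mathfrak{p}})$ for $0\leq i\leq k$. Maximality of $I_{k}$ in $V_{k}$ then yields $(I_{k})^{u}\subseteq I_{k}$, and with $I_{k}\subseteq (I_{k})^{u}$ we conclude $I_{k}=(I_{k})^{u}$; so every coefficient ideal of $I$ is unmixed.

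It remains to prove $(I^{u})_{k}=(I_{k})^{u}$, and since $(I_{k})^{u}=I_{k}$ by the previous step this reduces to $(I^{u})_{k}=I_{k}$. Because $e_{i}(I_{\mathfrak{p}})=e_{i}((I^{u})_{\mathfrak{p}})$ for all $i$ on $\Lambda$, the numerical conditions defining $V_{k}$ for $I$ and for $I^{u}$ are literally the same, the two sets differing only in requiring containment of $I$ versus $I^{u}$. First $I^{u}\in V_{k}$ for $I$, so $I^{u}\subseteq I_{k}$; then $I_{k}$ contains $I^{u}$ and meets the numerical conditions for $I^{u}$, so $I_{k}\in V_{k}$ for $I^{u}$ and $I_{k}\subseteq (I^{u})_{k}$. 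Conversely $(I^{u})_{k}\supseteq I^{u}\supseteq I$ satisfies the same conditions, so $(I^{u})_{k}\in V_{k}$ for $I$ and $(I^{u})_{k}\subseteq I_{k}$. Hence $(I^{u})_{k}=I_{k}=(I_{k})^{u}$.

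The only non-formal step is the first paragraph --- the localization formula $(J^{u})_{\mathfrak{p}}=J_{\mathfrak{p}}$ for minimal $\mathfrak{p}$, and the resulting verification that $I^{u}$ is equimultiple. Once these are established, everything else is a direct application of the maximality characterization of the coefficient ideals, exploiting that $I$ and $I^{u}$ induce identical Hilbert coefficients at each minimal prime.
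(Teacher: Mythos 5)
Your proof is correct, and it reaches the conclusion by a leaner route than the paper's. The common engine is the same: localizing at a minimal prime kills embedded and extraneous components, so $(J^{u})_{\mathfrak{p}}=J_{\mathfrak{p}}$ for every $\mathfrak{p}$ in your $\Lambda$, hence $J$ and $J^{u}$ have identical Hilbert coefficients there, and everything else is maximality in the sets $V_{k}$. But the organization differs. The paper first constructs the chain $(I_{s})^{u}\subseteq\cdots\subseteq(I_{1})^{u}\subseteq\overline{I}$, then identifies the coefficient ideals $J_{k}$ of $I^{u}$ with $(I_{k})^{u}$ via the squeeze $(I_{k})^{u}\subseteq J_{k}\subseteq I_{k}\subseteq (I_{k})^{u}$, and only at the end deduces unmixedness of $I_{k}$ by invoking Proposition \ref{equality_k}(1) to get $(I^{u})_{k}=I_{k}$. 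You reverse the order: unmixedness comes first in a single maximality step (checking $(I_{k})^{u}\in V_{k}$ directly, so $(I_{k})^{u}\subseteq I_{k}$), and then $(I^{u})_{k}=I_{k}$ follows from a bare double containment between $V_{k}$ for $I$ and $V_{k}$ for $I^{u}$, with no appeal to Proposition \ref{equality_k} and no chain construction. Your write-up also makes explicit two points the paper leaves tacit but actually needs: that $I^{u}$ is itself equimultiple (your B\"{o}ger argument with $e_{0}$ at the minimal primes shows $I\subseteq I^{u}\subseteq\overline{I}$, so $I$ is a reduction of $I^{u}$ and Theorem \ref{shah'thm_for_equi} legitimately applies to $I^{u}$ --- a hypothesis required even for the paper's use of Proposition \ref{equality_k}, which demands $J\supseteq I$ equimultiple with $J\subseteq I_{k}$), and that all minimal primes of $I$ have height $s$, so the unmixed part is exactly the intersection of the minimal primary components and the localization identity holds. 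What the paper's route buys is reuse of already-proved machinery; what yours buys is a shorter, self-contained argument whose only non-formal ingredient is the localization identity.
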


\begin{proof}
Firstly we construct a specific chain from $I^{u}$ formed by the unmixed part of the coefficient ideals of $I$ and after show that its terms are the coefficient ideals of $I^{u}$.
We have $((I_{1})^{u})_{\mathfrak{p}}=(I_{1})_{\mathfrak{p}} \subseteq \overline{I}_{\mathfrak{p}}$ for any $\mathfrak{p}\in \text{Ass}(R/\overline{I})$. So $(I_{1})^{u}\subseteq \overline{I}$. Moreover,
$((I_{2})^{u})_{\mathfrak{p}}=(I_{2})_{\mathfrak{p}} \subseteq ((I_{1})^{u})_{\mathfrak{p}}$ for any $\mathfrak{p}\in \text{Ass}(R/(I_{1})^{u})$ so that $(I_{2})^{u}\subseteq (I_{1})^{u}$. Inductively the desired chain is constructed.

Now let $I^{u}\subseteq J_{s} \subseteq \cdot \cdot \cdot \subseteq J_{1} \subseteq \overline{I}$ be the coefficient ideals of $I^{u}$.
Fix $k$. So $J_{k}$ is the unique largest ideal for which $e_{i}((I^{u})_{\mathfrak{p}})=e_{i}((J_{k})_{\mathfrak{p}})$ for $0\leq i \leq k$ and any $\mathfrak{p} \in \text{Min}(R/I^{u})$. Moreover
$e_{i}((I^{u})_{\mathfrak{p}})=e_{i}(I_{\mathfrak{p}})=e_{i}((I_{k})_{\mathfrak{p}})=e_{i}(((I_{k})^{u})_{\mathfrak{p}})$ for $0\leq i \leq k$ and any $\mathfrak{p} \in \text{Min}(R/I^{u})$.
Hence, $J_{k}\subseteq I_{k}\subseteq (I_{k})^{u}$ and therefore $(I_{k})^{u}=J_{k}$ for each $k$.

By Proposition \ref{equality_k}, $(I^{u})_{k}= I_{k}$. Therefore, $I_{k}$ is an unmixed ideal for each $k$.
\end{proof}

Next result expresses the primary decomposition components of the coefficient ideals $I_{k}$, besides giving another way to show they are unmixed ideals.

\begin{prop}\label{prym_deco}
Assume the setup of Theorem \ref{shah'thm_for_equi}. Let $\mathfrak{p}_{1},...,\mathfrak{p}_{r}$ be the minimal primes of $I$. Then
$I_{k}$ has the following primary decomposition
$$
I_{k}=((I_{\mathfrak{p}_{1}})_{k} \cap R)\cap \cdot \cdot \cdot \cap ((I_{\mathfrak{p}_{r}})_{k} \cap R).
$$
Furthermore, $(I_{k})_{\mathfrak{p}}=(I_{\mathfrak{p}})_{k}$ for every prime ideal $\mathfrak{p}$.
\end{prop}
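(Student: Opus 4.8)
The plan is to establish the displayed formula by proving that the ideal
$$ L \;=\; \bigcap_{i=1}^{r}\big((I_{\mathfrak{p}_{i}})_{k}\cap R\big) $$
equals $I_{k}$, and then to read off the localization identity $(I_{k})_{\mathfrak{p}}=(I_{\mathfrak{p}})_{k}$ from it. First I would check that the construction is legitimate: each $\mathfrak{p}_{i}$ is minimal over $I$, so $I_{\mathfrak{p}_{i}}$ is $\mathfrak{p}_{i}R_{\mathfrak{p}_{i}}$-primary, and the coefficient ideal $(I_{\mathfrak{p}_{i}})_{k}$ (in Shah's $\mathfrak{m}$-primary sense) lies between $I_{\mathfrak{p}_{i}}$ and its integral closure, hence is again $\mathfrak{p}_{i}R_{\mathfrak{p}_{i}}$-primary; its contraction $(I_{\mathfrak{p}_{i}})_{k}\cap R$ is therefore $\mathfrak{p}_{i}$-primary. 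Since $I\subseteq I_{\mathfrak{p}_{i}}\cap R\subseteq (I_{\mathfrak{p}_{i}})_{k}\cap R$ for every $i$ we get $I\subseteq L$, and because the $\mathfrak{p}_{i}$ are pairwise distinct minimal primes the intersection will be an irredundant primary decomposition once $L=I_{k}$ is shown, its associated primes being exactly $\text{Min}(R/I)=\text{Min}(R/\overline{I})$, in agreement with the unmixedness of $I_{k}$ from Theorem \ref{coef_unmixed}.

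For $L\subseteq I_{k}$ I would localize $L$ at each minimal prime. When $i\neq j$ we have $\mathfrak{p}_{i}\not\subseteq\mathfrak{p}_{j}$, so the $\mathfrak{p}_{i}$-primary ideal $(I_{\mathfrak{p}_{i}})_{k}\cap R$ contains an element outside $\mathfrak{p}_{j}$ and localizes to the unit ideal in $R_{\mathfrak{p}_{j}}$; and $\big((I_{\mathfrak{p}_{j}})_{k}\cap R\big)_{\mathfrak{p}_{j}}=(I_{\mathfrak{p}_{j}})_{k}$ since a $\mathfrak{p}_{j}R_{\mathfrak{p}_{j}}$-primary ideal is recovered from its contraction after relocalizing. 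Hence $L_{\mathfrak{p}_{j}}=(I_{\mathfrak{p}_{j}})_{k}$, and the defining property of the primary coefficient ideal gives $e_{i}(L_{\mathfrak{p}_{j}})=e_{i}(I_{\mathfrak{p}_{j}})$ for $0\leq i\leq k$ and all $j$. Thus $L$ belongs to the set $V_{k}$ appearing in the proof of Theorem \ref{shah'thm_for_equi}, and maximality of $I_{k}$ forces $L\subseteq I_{k}$.

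For the reverse inclusion I would use that, by the defining property of $I_{k}$, the ideal $(I_{k})_{\mathfrak{p}_{i}}$ contains $I_{\mathfrak{p}_{i}}$ and satisfies $e_{j}((I_{k})_{\mathfrak{p}_{i}})=e_{j}(I_{\mathfrak{p}_{i}})$ for $0\leq j\leq k$. Since $(I_{\mathfrak{p}_{i}})_{k}$ is the largest ideal containing $I_{\mathfrak{p}_{i}}$ with this property (Shah's primary case), it follows that $(I_{k})_{\mathfrak{p}_{i}}\subseteq(I_{\mathfrak{p}_{i}})_{k}$, whence $I_{k}\subseteq (I_{k})_{\mathfrak{p}_{i}}\cap R\subseteq (I_{\mathfrak{p}_{i}})_{k}\cap R$ for each $i$ and therefore $I_{k}\subseteq L$. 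Combining the two inclusions gives $L=I_{k}$, the asserted decomposition; comparing with $L_{\mathfrak{p}_{i}}=(I_{\mathfrak{p}_{i}})_{k}$ from the previous step then yields $(I_{k})_{\mathfrak{p}_{i}}=(I_{\mathfrak{p}_{i}})_{k}$ for every minimal prime.

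Finally, for an arbitrary prime $\mathfrak{p}$, if $I\not\subseteq\mathfrak{p}$ both sides are the unit ideal, so assume $I\subseteq\mathfrak{p}$. Then $I_{\mathfrak{p}}$ is equimultiple in $R_{\mathfrak{p}}$ of analytic spread $s$, with minimal primes $(\mathfrak{p}_{i})_{\mathfrak{p}}$ for those $\mathfrak{p}_{i}\subseteq\mathfrak{p}$, and the hypotheses of Theorem \ref{shah'thm_for_equi} survive in $R_{\mathfrak{p}}$ (quasi-unmixedness is stable under localization and the residue field remains infinite). Applying the decomposition just proved to $I_{\mathfrak{p}}$ and using $(I_{\mathfrak{p}})_{(\mathfrak{p}_{i})_{\mathfrak{p}}}=I_{\mathfrak{p}_{i}}$, I obtain $(I_{\mathfrak{p}})_{k}=\bigcap_{\mathfrak{p}_{i}\subseteq\mathfrak{p}}\big((I_{\mathfrak{p}_{i}})_{k}\cap R_{\mathfrak{p}}\big)$, while $L_{\mathfrak{p}}=\bigcap_{i}\big((I_{\mathfrak{p}_{i}})_{k}\cap R\big)_{\mathfrak{p}}$ collapses to the same intersection, the factors with $\mathfrak{p}_{i}\not\subseteq\mathfrak{p}$ becoming $R_{\mathfrak{p}}$ and those with $\mathfrak{p}_{i}\subseteq\mathfrak{p}$ satisfying $\big((I_{\mathfrak{p}_{i}})_{k}\cap R\big)_{\mathfrak{p}}=(I_{\mathfrak{p}_{i}})_{k}\cap R_{\mathfrak{p}}$ by flatness of localization. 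This gives $(I_{k})_{\mathfrak{p}}=(I_{\mathfrak{p}})_{k}$. I expect the main obstacle to be exactly this bookkeeping around contraction and relocalization — confirming primarity of each $(I_{\mathfrak{p}_{i}})_{k}\cap R$, the unit-ideal behaviour at incomparable minimal primes, and the flat-base-change identity — since once these compatibilities are secured, everything else follows from the maximality characterizations of $I_{k}$ and of the primary coefficient ideals.
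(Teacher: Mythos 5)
Your proposal is correct and follows essentially the same route as the paper's own proof: both identify $I_{k}$ with $H=\bigcap_{i}\big((I_{\mathfrak{p}_{i}})_{k}\cap R\big)$ by noting each contraction is $\mathfrak{p}_{i}$-primary, computing $H_{\mathfrak{p}_{i}}=(I_{\mathfrak{p}_{i}})_{k}$ (the other factors becoming units), and then appealing to the maximality of $I_{k}$ for one inclusion and of Shah's $\mathfrak{m}$-primary coefficient ideals for the other, before localizing the resulting decomposition at an arbitrary prime to get $(I_{k})_{\mathfrak{p}}=(I_{\mathfrak{p}})_{k}$. You simply make explicit several details the paper's terse ``by definition of equimultiple coefficient ideals'' compresses (primarity of the contractions, recovery of a primary ideal from its contraction, stability of the hypotheses of Theorem \ref{shah'thm_for_equi} under localization), which is a faithful elaboration rather than a different argument.
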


\begin{proof}
Fix $k \in \{1,...,s\}$. To simplify notations let $J_{i}$, $H_{i}$ denote $(I_{\mathfrak{p}_{i}})_{k}$ and $(I_{\mathfrak{p}_{i}})_{k} \cap R$ respectively, where
$1\leq i \leq r$.
Since $J_{i}$ is $\mathfrak{p}_{i}R_{\mathfrak{p}_{i}}$-primary, $H_{i}$ is $\mathfrak{p}_{i}$-primary.
Set $H=H_{1}\cap \cdot \cdot \cdot \cap H_{r}$. Then $H_{\mathfrak{p}_{i}}=(H_{i})_{\mathfrak{p}_{i}}=J_{i}$ for each $i$, as
$(H_{j})R_{\mathfrak{p}_{i}}=R_{\mathfrak{p}_{i}}$ for every $j \neq i$. By definition of equimultiple coefficient ideals
one may then conclude $I_{k}=H$.

Hence, the second part follows by observing that
$$
(I_{\mathfrak{p}})_{k}=((I_{\mathfrak{p}_{1}})_{k} \cap R_{\mathfrak{p}})\cap \cdot \cdot \cdot \cap ((I_{\mathfrak{p}_{t}})_{k} \cap R_{\mathfrak{p}})
$$
and
$$
(I_{k})_{\mathfrak{p}}=((I_{\mathfrak{p}_{1}})_{k} \cap R)_{\mathfrak{p}}\cap \cdot \cdot \cdot \cap ((I_{\mathfrak{p}_{t}})_{k} \cap R)_{\mathfrak{p}},
$$
where $\mathfrak{p}_{1},..,\mathfrak{p}_{t} \subseteq \mathfrak{p}$ and $\mathfrak{p}_{t+1},..,\mathfrak{p}_{r} \not\subseteq \mathfrak{p}$
\end{proof}

\begin{thm}\label{height_ass}
Let $(R,\mathfrak{m})$ be a local ring and $I$ an equimultiple ideal.
If $((I^{n})^{*})^{u}=I^{n}$ for all $n$, then $\emph{ht}(P)<s$ for every $P\in \emph{Ass}(G_{I}(R))$.
\end{thm}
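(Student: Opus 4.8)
The plan is to argue directly, reducing everything to a single minimal prime of $I$. Throughout write $G=G_{I}(R)$ and let $G_{+}=\bigoplus_{n\geq 1}I^{n}/I^{n+1}$ denote its irrelevant ideal. The first move is to unpack the hypothesis, which secretly contains two statements. The ideal $((I^{n})^{*})^{u}$ is, by its very construction, unmixed of height $s$: it is the intersection of the height-$s$ primary components of the Ratliff--Rush closure $(I^{n})^{*}$, whose minimal primes are exactly those of $I$. Hence the equality $((I^{n})^{*})^{u}=I^{n}$ forces, for every $n$: (i) each power $I^{n}$ is unmixed, so $\text{Ass}(R/I^{n})=\text{Min}(R/I)$; and (ii) after localizing at any $\mathfrak{p}\in\text{Min}(R/I)$, since both $(\,\cdot\,)^{*}$ and $(\,\cdot\,)^{u}$ respect localization at a minimal prime, $(I_{\mathfrak{p}}^{\,n})^{*}=I_{\mathfrak{p}}^{\,n}$ for all $n$. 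Both will be used.

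Next fix $P\in\text{Ass}(G)$. Since $G$ is graded and Noetherian, $P$ is homogeneous; and as $I\subseteq Q\cap R$ for every prime $Q$ of $G$, the contraction $\mathfrak{p}:=P\cap R$ contains $I$. I would then invoke the standard identifications $\{Q\cap R: Q\in\text{Ass}_{G}(G)\}=\text{Ass}_{R}(G)=\bigcup_{n}\text{Ass}_{R}(I^{n}/I^{n+1})$, together with the exact sequences $0\to I^{n}/I^{n+1}\to R/I^{n+1}\to R/I^{n}\to 0$, to conclude $\mathfrak{p}\in\text{Ass}(R/I^{m})$ for some $m$. By (i) this set equals $\text{Min}(R/I)$, so $\mathfrak{p}$ is a \emph{minimal} prime of $I$ and $\text{ht}(\mathfrak{p})=s$. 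I regard this as the pivotal step: it is precisely the unmixedness of all powers that prevents associated primes of $G$ from contracting to embedded primes of $I$, which the hypothesis would otherwise be unable to see.

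With $\mathfrak{p}$ minimal over $I$, every prime of $G$ contained in $P$ again contracts to $\mathfrak{p}$, so height is preserved under localization: $\text{ht}_{G}(P)=\text{ht}_{G_{\mathfrak{p}}}(PG_{\mathfrak{p}})$, where $G_{\mathfrak{p}}=G_{I_{\mathfrak{p}}}(R_{\mathfrak{p}})$ has dimension $s$. By a theorem of Ratliff, $R_{\mathfrak{p}}$ quasi-unmixed implies $G_{\mathfrak{p}}$ is quasi-unmixed, hence equidimensional and catenary, so $\text{ht}_{G}(P)=s-\dim(G_{\mathfrak{p}}/PG_{\mathfrak{p}})$. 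The homogeneous prime $PG_{\mathfrak{p}}$ is contained in the maximal homogeneous ideal $\mathfrak{M}_{\mathfrak{p}}$ of $G_{\mathfrak{p}}$, with equality if and only if $G_{+}\subseteq P$. If $G_{+}\not\subseteq P$, then $PG_{\mathfrak{p}}\subsetneq\mathfrak{M}_{\mathfrak{p}}$ gives $\dim(G_{\mathfrak{p}}/PG_{\mathfrak{p}})\geq 1$, and therefore $\text{ht}(P)\leq s-1<s$, which is exactly the desired bound.

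Finally I would rule out the case $G_{+}\subseteq P$. There $PG_{\mathfrak{p}}=\mathfrak{M}_{\mathfrak{p}}\in\text{Ass}(G_{\mathfrak{p}})$, so $\text{depth}\,G_{I_{\mathfrak{p}}}(R_{\mathfrak{p}})=0$. Since $\mathfrak{p}$ is minimal over $I$, the ideal $I_{\mathfrak{p}}$ is $\mathfrak{p}R_{\mathfrak{p}}$-primary, and for primary ideals positive depth of the associated graded ring is equivalent to $(I_{\mathfrak{p}}^{\,n})^{*}=I_{\mathfrak{p}}^{\,n}$ for all $n$ (the classical Ratliff--Rush criterion, cf.\ \cite{S}). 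Depth zero then produces some $n$ with $(I_{\mathfrak{p}}^{\,n})^{*}\neq I_{\mathfrak{p}}^{\,n}$, contradicting (ii). Hence $G_{+}\not\subseteq P$ and $\text{ht}(P)<s$. The main obstacle I anticipate is the height identity of the third paragraph: converting the codimension $\dim(G_{\mathfrak{p}}/PG_{\mathfrak{p}})$ into an honest lower bound for $\text{ht}(P)$ genuinely requires the equidimensionality and catenarity of $G_{I_{\mathfrak{p}}}(R_{\mathfrak{p}})$, i.e.\ Ratliff's transfer of quasi-unmixedness from $R_{\mathfrak{p}}$ to its associated graded ring, and this is the crux on which the argument rests.
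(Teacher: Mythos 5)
Your reduction of the hypothesis to the two conditions $(I^{n})^{*}=I^{n}$ and $(I^{n})^{u}=I^{n}$ for all $n$, and your identification of $\mathfrak{p}=P\cap R$ as a \emph{minimal} prime of $I$ of height $s$ via $\mathfrak{p}\in\text{Ass}(R/I^{m})=\text{Min}(R/I)$, are correct and parallel the paper's proof (which reaches the same point in the extended Rees algebra $\mathcal{R}=R[It,t^{-1}]$ by writing $P=(t^{-1}\mathcal{R}:at^{r})$, whence $\mathfrak{p}=(I^{r+1}:a)$). The genuine gap is in your third paragraph. The theorem is stated for an \emph{arbitrary} Noetherian local ring --- quasi-unmixedness is precisely the hypothesis this statement drops --- yet your pivotal identity $\text{ht}_{G}(P)=s-\dim(G_{\mathfrak{p}}/PG_{\mathfrak{p}})$ is justified only by ``$R_{\mathfrak{p}}$ quasi-unmixed implies $G_{I_{\mathfrak{p}}}(R_{\mathfrak{p}})$ quasi-unmixed, hence equidimensional and catenary.'' Nothing in the hypotheses grants quasi-unmixedness of $R_{\mathfrak{p}}$, and in a general local ring the equality $\text{ht}(Q)+\dim(A/Q)=\dim A$ simply fails (non-catenary and non-equidimensional local rings exist), so the step you yourself single out as the crux is unsupported. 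The paper avoids this entirely: it uses the dimension \emph{inequality} $\text{ht}(P)-\text{ht}(\mathfrak{p})\leq 1-\text{tr.deg}_{R/\mathfrak{p}}\,\mathcal{R}/P$, valid for any finitely generated extension of a Noetherian domain with no catenarity or equidimensionality assumption, rules out $\text{tr.deg}=0$ (your $G_{+}\subseteq P$ case, in disguise), and handles the non-domain case by passing to a minimal prime $Q\subseteq P$ with $\text{ht}(P)=\text{ht}(P/Q)$. As written, your argument proves the theorem only under an added quasi-unmixedness hypothesis; to recover the stated generality you must replace the dimension equality by the paper's inequality in $\mathcal{R}$.

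A secondary, repairable soft spot: the ``classical Ratliff--Rush criterion'' you invoke in the last paragraph is proved in the literature for ideals containing a nonzerodivisor, and $I_{\mathfrak{p}}$ need not be regular in $R_{\mathfrak{p}}$ here (a general local ring, and even a quasi-unmixed one, can have $\mathfrak{p}\in\text{Ass}(R)$ with $\text{ht}(\mathfrak{p})=s$). Fortunately the only direction you use holds for arbitrary ideals, and without any localization: if $G_{+}\subseteq P=\operatorname{Ann}(z')$ with $z'$ the image of $z\in I^{n}\setminus I^{n+1}$, then $zI\subseteq I^{n+2}$, so $z\in(I^{n+2}:I)\subseteq\bigcup_{k}(I^{n+1+k}:I^{k})=(I^{n+1})^{*}=I^{n+1}$, a contradiction (the middle identification of the two Ratliff--Rush unions is an elementary colon computation). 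With that substitution your fourth paragraph becomes airtight and even simpler than what you wrote; the third paragraph remains the only step that genuinely blocks the proof at the theorem's stated level of generality.
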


\begin{proof}
It is easy to see that the hypothesis implies $(I^{n})^{*}=I^{n}$ and $(I^{n})^{u}=I^{n}$ for all $n$.
Let $P\in \text{Ass}_{\mathcal{R}}(\mathcal{R}/t^{-1}\mathcal{R})$
(where $\mathcal{R}$ is the Extended Rees Algebra) and $\mathfrak{p}=P\cap R$.
Initially we assume $R$ is a domain.
By the Dimension Inequality one has
$$
\text{ht}(P)-\text{ht}(\mathfrak{p}) \leq 1-\text{tr.deg}_{R/\mathfrak{p}}\hspace{0.05cm} \mathcal{R}/P.
$$
We claim that $\text{tr.deg}_{R/\mathfrak{p}}\hspace{0.05cm} \mathcal{R}/P \neq 0$.
Suppose the contrary. Note first that we can assume
$\mathfrak{p}$ is maximal, since $((I_{\mathfrak{p}}^{n})^{*})^{u}=I_{\mathfrak{p}}^{n}$
and $\text{ht}(P)=\text{ht}(P_{\mathfrak{p}})$. Then $\mathcal{R}/P$ is a finitely generated algebra over the field $k:=R/\mathfrak{p}$. Hence,
$\dim \mathcal{R}/P=\text{tr.deg}_{k}\mathcal{R}/P=0$. It implies $\mathcal{R}/P$ is a field, as $\mathcal{R}/P$ is a domain.
Therefore, $R/\mathfrak{p}$ and $ \mathcal{R}/P$ are isomorphic; whence $G_{+} \subseteq P/t^{-1}\mathcal{R}$, which is a contradiction since
$G_{+}$ is a regular ideal.

In conclusion, we can write $P=(t^{-1}\mathcal{R}:at^{r})$ for some homogeneous element $at^{r} \in \mathcal{R} \ \backslash \ t^{-1}\mathcal{R}$.
Hence, $Pat^{r}\in I^{r+1}t^{r}$ for some integer $r\geq 0$ so that $\mathfrak{p}=(I^{r+1}:a)$. This means $\mathfrak{p}\in  \text{Ass}(R/I^{r+1})$. By hypothesis,
$\mathfrak{p}$ is a minimal prime of $I^{r+1}$, so
$\text{ht}(\mathfrak{p})=s$. As $\text{tr.deg}_{R/\mathfrak{p}}\hspace{0.05cm} \mathcal{R}/P\neq 0$, through the above inequality, we obtain
$\text{ht}(P) \leq s$ and therefore $\text{ht}(P/t^{-1}\mathcal{R}) < s$.

The case which $R$ is not a domain is similar. It follows by taking
a minimal prime $Q$ contained in $P$ such that $\text{ht}(P)=\text{ht}(P/Q)$ and after going module a minimal prime.
\end{proof}

\begin{rem}
The equation $((I^{n})^{*})^{u}=I^{n}$ for all $n$ is equivalent to have $(I^{n})^{*}=I^{n}$ for all $n$ and $(I^{n})^{u}=I^{n}$ for all $n$.
Moreover if $I$ is a regular ideal and only $(I^{n})^{u}=I^{n}$ for $n\gg 0$, the condition $(I^{n})^{*}=I^{n}$ for all $n$ implies $(I^{n})^{u}=I^{n}$ for all $n$, since
$\emph{Ass}(R/(I^{n})^{*})\subseteq \emph{Ass}(R/(I^{n+1})^{*})$ for all $n\geq 1$ \emph{(see \cite[p.14]{PZ})}.
\end{rem}

\section{Equimultiple Coefficient Ideals, Associated Graded ring and Serre's Condition $(S_n)$}

In this section, we see necessary and sufficient conditions for Associated graded ring $G_{I}(R)$ to satisfy $S_{1}$ condition. It has a relation to the concept of equimultiple coefficient ideals. Moreover, in Theorem \ref{height_ass} and Theorem \ref{height_ass2}, we generalize the Theorem 4 from \cite{S}, which concerns to the height of associated prime ideals of $G_{I}(R)$. In particular, we obtain that $G_{I}(R)$ satisfies the $S_{1}$ condition
if and only if $(I^{n})_{1}=I^{n}$ for all $n$.

\begin{lem}\label{equiv_height}
Let $(R,\mathfrak{m})$ be a quasi-unmixed local ring and let $I$ be a proper ideal of $R$. Then we have the following:
\begin{itemize}
  \item  [\emph{(1)}] If $\emph{ht}(P)<k$ for every $P\in \emph{Ass}(G_{I}(R))$ then $I_{\mathfrak{p}}^{n}=(I_{\mathfrak{p}}^{n})_{k}$ for all n and every $\mathfrak{p}\in \emph{Min}(R/I)$;
  \item  [\emph{(2)}] Suppose all the powers $I^{n}$ are unmixed ideals, then the converse of $\emph{(1)}$ is valid.
\end{itemize}
\end{lem}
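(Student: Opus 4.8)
The plan is to reduce both implications to the $\mathfrak{m}$-primary case and then invoke Shah's corresponding result \cite[Theorem 4]{S}, which in the $\mathfrak{n}$-primary setting asserts that for an $\mathfrak{n}$-primary ideal $J$ in a quasi-unmixed local ring $(A,\mathfrak{n})$ one has $\text{ht}(\mathfrak{P})<k$ for every $\mathfrak{P}\in\text{Ass}(G_{J}(A))$ if and only if $(J^{n})_{k}=J^{n}$ for all $n$. The bridge between the two settings is the flat base-change isomorphism $(G_{I}(R))_{\mathfrak{p}}\cong G_{I_{\mathfrak{p}}}(R_{\mathfrak{p}})$, obtained by localizing the sequences defining $I^{n}/I^{n+1}$ at the degree-zero multiplicative set $R\setminus\mathfrak{p}$. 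Since any prime contained in a prime $P$ with $P\cap R\subseteq\mathfrak{p}$ again contracts into $\mathfrak{p}$, heights are preserved under this localization, and $\text{Ass}(G_{I_{\mathfrak{p}}}(R_{\mathfrak{p}}))$ consists exactly of the images $P_{\mathfrak{p}}$ of those $P\in\text{Ass}(G_{I}(R))$ with $P\cap R\subseteq\mathfrak{p}$. As $R_{\mathfrak{p}}$ is again quasi-unmixed (localization preserves quasi-unmixedness, by Ratliff) of dimension $s$ and $I_{\mathfrak{p}}$ is $\mathfrak{p}R_{\mathfrak{p}}$-primary, Shah's theorem applies to $J=I_{\mathfrak{p}}$.

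For item (1) I would argue directly: fix $\mathfrak{p}\in\text{Min}(R/I)$ and pass to $R_{\mathfrak{p}}$. Every $\mathfrak{P}\in\text{Ass}(G_{I_{\mathfrak{p}}}(R_{\mathfrak{p}}))$ equals $P_{\mathfrak{p}}$ for some $P\in\text{Ass}(G_{I}(R))$ with $\text{ht}(\mathfrak{P})=\text{ht}(P)<k$ by hypothesis; hence every associated prime of $G_{I_{\mathfrak{p}}}(R_{\mathfrak{p}})$ has height $<k$, and the forward direction of Shah's theorem yields $(I_{\mathfrak{p}}^{n})_{k}=I_{\mathfrak{p}}^{n}$ for all $n$. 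No unmixedness is used here, which is precisely why (1) holds unconditionally.

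For the converse in item (2) the extra ingredient is to show that every $P\in\text{Ass}(G_{I}(R))$ contracts to a \emph{minimal} prime of $I$. Identifying $G_{I}(R)$ with $\mathcal{R}/t^{-1}\mathcal{R}$ and arguing as in the proof of Theorem \ref{height_ass}, such a $P$ is the annihilator of a homogeneous class $\overline{at^{r}}$, whence $\mathfrak{p}:=P\cap R=(I^{r+1}:a)\in\text{Ass}(R/I^{r+1})$ for some $r\geq 0$. The hypothesis that every power is unmixed forces $\text{Ass}(R/I^{r+1})=\text{Min}(R/I^{r+1})=\text{Min}(R/I)$, so $\mathfrak{p}\in\text{Min}(R/I)$ and $P$ survives localization at $\mathfrak{p}$. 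Since $(I_{\mathfrak{p}}^{n})_{k}=I_{\mathfrak{p}}^{n}$ for all $n$ by assumption, the reverse direction of Shah's theorem gives $\text{ht}(\mathfrak{Q})<k$ for every $\mathfrak{Q}\in\text{Ass}(G_{I_{\mathfrak{p}}}(R_{\mathfrak{p}}))$; taking $\mathfrak{Q}=P_{\mathfrak{p}}$ and using height preservation yields $\text{ht}(P)=\text{ht}(P_{\mathfrak{p}})<k$, as desired.

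I expect the main obstacle to be the bookkeeping around graded localization—establishing $(G_{I}(R))_{\mathfrak{p}}\cong G_{I_{\mathfrak{p}}}(R_{\mathfrak{p}})$ together with the simultaneous preservation of associated primes and heights—and, for (2), the step identifying $P\cap R$ as a minimal prime of $I$, which is exactly where the unmixedness of all powers $I^{n}$ is indispensable: an embedded prime of some $I^{r+1}$ would fail to be a minimal prime of $I$ and the localization argument would collapse. A minor technical point is that Shah's existence theorem is stated over an infinite residue field, whereas $\kappa(\mathfrak{p})$ may be finite; this is harmless and can be absorbed by the faithfully flat extension $R_{\mathfrak{p}}\to R_{\mathfrak{p}}(X)$, under which coefficient ideals and associated primes behave compatibly.
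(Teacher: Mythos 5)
Your proposal is correct and takes essentially the same route as the paper: both parts reduce to Shah's result \cite[Theorem 4]{S} by localizing at minimal primes of $I$ (using $(G_{I}(R))_{\mathfrak{p}}\cong G_{I_{\mathfrak{p}}}(R_{\mathfrak{p}})$ with preservation of heights and associated primes), and in part (2) both arguments use that any $P\in\text{Ass}(G_{I}(R))$ contracts to some $\mathfrak{p}\in\text{Ass}(R/I^{n})$, which the unmixedness hypothesis forces to be a minimal prime of $I$, so that $\text{ht}(P)=\text{ht}(P_{\mathfrak{p}})$ and Shah's theorem applies in $R_{\mathfrak{p}}$. Your additional details (the explicit identification $\mathfrak{p}=(I^{r+1}:a)$ and the $R_{\mathfrak{p}}(X)$ device for finite residue fields) merely spell out points the paper's terser proof leaves implicit.
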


\begin{proof}
To show $\text{(1)}$ let $\mathfrak{p}\in \text{Min}(R/I)$. Then for every
$P_{\mathfrak{p}}\in \text{Ass}_{\mathcal{R}_{\mathfrak{p}}}(\mathcal{R}_{\mathfrak{p}}/t^{-1}\mathcal{R}_{\mathfrak{p}})$ we have
$\text{ht}(P_{\mathfrak{p}})=\text{ht}(P)<k+1$. We use then the result \cite[Theorem 4]{S} to complete the assertion.
For the other assertion, let $P \in \text{Ass}_{\mathcal{R}}(\mathcal{R}/t^{-1}\mathcal{R})$. We have $\mathfrak{p}=P\cap R\in \text{Ass}(R/I^{n})$
which is a minimal on $I^{n}$ by assumption. Therefore, $\text{ht}(P)=\text{ht}(P_{\mathfrak{p}})<k+1$ and the result follows.
\end{proof}

\begin{thm}\label{height_ass}
Let $(R,\mathfrak{m})$ be a quasi-unmixed local ring and let $I$ be an equimultiple ideal.
\begin{itemize}
  \item [\emph{(1)}] If $(I^{n})_{k}=I^{n}$ for all $n$, then
$\emph{ht}(P)<k$ for every $P\in \emph{Ass}(G_{I}(R))$.
  \item [\emph{(2)}] If $\emph{ht}(P)<k$ for every $P\in \emph{Ass}(G_{I}(R))$
then $(I^{n})_{k}=(I^{n})^{u}$ for all $n$.
\end{itemize}
In particular, $G_{I}(R)$ satisfies the $S_{1}$ condition
if and only if $(I^{n})_{1}=I^{n}$ for all $n$.
\end{thm}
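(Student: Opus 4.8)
The plan is to treat both parts as a localization/globalization argument powered by Lemma \ref{equiv_height}, passing between $R$ and the localizations $R_{\mathfrak{p}}$ at the minimal primes $\mathfrak{p}$ of $I$ by means of the identity $((I^{n})_{k})_{\mathfrak{p}}=(I^{n}_{\mathfrak{p}})_{k}$ from Proposition \ref{prym_deco}, and supplying every unmixedness statement from Theorem \ref{coef_unmixed}.

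For (1) I would feed the hypothesis $(I^{n})_{k}=I^{n}$ into part (2) of Lemma \ref{equiv_height} (the converse implication), whose two requirements I verify directly. First, each $I^{n}$ is unmixed: by Theorem \ref{coef_unmixed} the coefficient ideal $(I^{n})_{k}$ is unmixed, and it equals $I^{n}$ by assumption. Second, localizing $(I^{n})_{k}=I^{n}$ at a minimal prime $\mathfrak{p}$ of $I$ and using $((I^{n})_{k})_{\mathfrak{p}}=(I^{n}_{\mathfrak{p}})_{k}$ gives $(I^{n}_{\mathfrak{p}})_{k}=I^{n}_{\mathfrak{p}}$ for all $n$ and every $\mathfrak{p}\in \text{Min}(R/I)$. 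Lemma \ref{equiv_height}(2) then returns $\text{ht}(P)<k$ for every $P\in\text{Ass}(G_{I}(R))$.

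For (2), Lemma \ref{equiv_height}(1) gives at once $(I^{n}_{\mathfrak{p}})_{k}=I^{n}_{\mathfrak{p}}$ for all $n$ and all $\mathfrak{p}\in\text{Min}(R/I)$, so only globalization remains. I would take the primary decomposition $(I^{n})_{k}=\bigcap_{i}((I^{n}_{\mathfrak{p}_{i}})_{k}\cap R)$ over the minimal primes $\mathfrak{p}_{i}$ of $I$ provided by Proposition \ref{prym_deco}, substitute $(I^{n}_{\mathfrak{p}_{i}})_{k}=I^{n}_{\mathfrak{p}_{i}}$, and observe that $\bigcap_{i}(I^{n}_{\mathfrak{p}_{i}}\cap R)$ is precisely the intersection of the (height-$s$) isolated primary components of $I^{n}$, namely $(I^{n})^{u}$. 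Hence $(I^{n})_{k}=(I^{n})^{u}$ for all $n$.

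For the final equivalence I would first record that, $G_{I}(R)$ being Noetherian, the condition $S_{1}$ is exactly the absence of embedded primes, i.e. $\text{ht}(P)<1$ for every $P\in\text{Ass}(G_{I}(R))$. The implication $(I^{n})_{1}=I^{n}\ \forall n\Rightarrow S_{1}$ is then immediate from (1) with $k=1$. For the converse I apply (2) with $k=1$ to get $(I^{n})_{1}=(I^{n})^{u}$, so the only remaining task is to upgrade $(I^{n})^{u}$ to $I^{n}$, that is, to show $S_{1}$ forces every power $I^{n}$ to be unmixed; this is the step I expect to be the main obstacle. I would handle it by contraposition: if some $I^{m}$ had an embedded prime $\mathfrak{q}$, then $\text{ht}(\mathfrak{q})>s\geq 1$ because $I$ is equimultiple, while the filtration $0\to I^{n}/I^{n+1}\to R/I^{n+1}\to R/I^{n}\to 0$ gives $\text{Ass}(R/I^{m})\subseteq \text{Ass}_{R}(G_{I}(R))=\{P\cap R : P\in\text{Ass}(G_{I}(R))\}$, so $\mathfrak{q}=P\cap R$ for some $P\in\text{Ass}(G_{I}(R))$. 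Since $R$ is quasi-unmixed, $G_{I}(R)$ is equidimensional and its minimal primes contract to minimal primes of $R$; as $\text{ht}(\mathfrak{q})>0$, such a $P$ cannot be minimal, so $G_{I}(R)$ has an embedded prime, contradicting $S_{1}$. Thus every $I^{n}$ is unmixed and $(I^{n})_{1}=(I^{n})^{u}=I^{n}$, which closes the equivalence.
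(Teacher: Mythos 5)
Parts (1) and (2) of your proposal are correct and essentially the paper's own route. For (1) the paper likewise combines Theorem \ref{coef_unmixed} (unmixedness of $(I^{n})_{k}$, hence $(I^{n})^{u}=I^{n}$), the localization identity $((I^{n})_{k})_{\mathfrak{p}}=(I^{n}_{\mathfrak{p}})_{k}$ of Proposition \ref{prym_deco}, and Lemma \ref{equiv_height}(2). For (2) the paper re-derives the local statement $(I^{n}_{\mathfrak{p}})_{k}=I^{n}_{\mathfrak{p}}$ by hand from \cite[Theorem 4]{S} rather than quoting Lemma \ref{equiv_height}(1), and then globalizes using unmixedness of $(I^{n})_{k}$; your globalization through the primary decomposition of Proposition \ref{prym_deco} is the same computation. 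Your identification of $S_{1}$ with ``$\operatorname{ht}(P)<1$ for all $P\in\operatorname{Ass}(G_{I}(R))$'' and your filtration argument giving $\operatorname{Ass}(R/I^{m})\subseteq\{P\cap R \ : \ P\in\operatorname{Ass}(G_{I}(R))\}$ are also fine (the paper does the analogous step in the extended Rees algebra).

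The gap is exactly where you predicted, and your fix for it fails. The claim that ``the minimal primes of $G_{I}(R)$ contract to minimal primes of $R$'' is false: the structure map $R\to G_{I}(R)$ kills $I$, so \emph{every} prime of $G_{I}(R)$, minimal or not, contracts to a prime containing $I$, hence of height at least $s\geq 1$. Your criterion ``$\operatorname{ht}(\mathfrak{q})>0$ forces $P$ to be non-minimal'' would therefore show that $G_{I}(R)$ has no minimal primes at all; as written the contraposition proves nothing. What is actually needed is the sharper fact that an associated prime $P$ of $G_{I}(R)$ of height $0$ contracts to a prime of height at most $s$, i.e.\ to a \emph{minimal prime of} $I$, so that an embedded prime $\mathfrak{q}$ of $I^{m}$ (which has height $>s$) cannot be such a contraction. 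This is the real content of the step, and it is where quasi-unmixedness and equimultiplicity must enter. The paper obtains it by working in $\mathcal{R}=R[It,t^{-1}]$, reducing to a domain modulo a minimal prime $Q\subseteq P$ with $\operatorname{ht}(P)=\operatorname{ht}(P/Q)$, and invoking the dimension formula (legitimate because quasi-unmixed rings are universally catenary): $\operatorname{ht}(\mathfrak{p})=\operatorname{ht}(P)-1+t$, where $t=\operatorname{tr.deg}_{R/\mathfrak{p}}\mathcal{R}/P\leq s$, the bound coming from equimultiplicity via $\ell(I_{\mathfrak{p}})=s$ for all $\mathfrak{p}\supseteq I$; under $S_{1}$ one has $\operatorname{ht}(P)=1$ in $\mathcal{R}$, whence $\operatorname{ht}(\mathfrak{p})=t\leq s$ and every associated prime of $R/I^{n}$ is minimal. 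A salvage along your lines is possible but needs comparable input, for instance Ratliff's equidimensionality of $G_{I}(R)$ over a quasi-unmixed base together with the fiber-dimension bound $\dim\big(G_{I}(R)\otimes_{R}\kappa(\mathfrak{q})\big)\leq\ell(I_{\mathfrak{q}})=s$; your proposal supplies neither, so the final equivalence is not proved.
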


\begin{proof}
If $(I^{n})_{k}=I^{n}$ we have in particular that $(I^{n})^{u}=I^{n}$ for all $n$. Further by localizing we obtain
$(I^{n}_{\mathfrak{p}})_{k}=((I^{n})_{k})_{\mathfrak{p}}=I^{n}_{\mathfrak{p}}$, by Proposition \ref{prym_deco}.
Now one just applies Lemma \ref{equiv_height} to conclude item $\text{(1)}$.

To show $\text{(2)}$ firstly let $\mathfrak{p}$ be a minimal prime of $I$ and consider the localization $S^{-1}R[It,t^{-1}]$, where $S=R\backslash\mathfrak{p}$.
For each associated prime $S^{-1}P\in \text{Ass}_{S^{-1}R[It,t^{-1}]}\big(\frac{S^{-1}R[It,t^{-1}]}{t^{-1}S^{-1}R[It,t^{-1}]}\big)$,
we have $\text{ht}(S^{-1}P)=\text{ht}(P)<k+1$. Thus, by \cite[Theorem 4]{S}, we obtain, for each $\mathfrak{p}\in \text{Min}(R/I)$, that
$I_{\mathfrak{p}}^{n}=(I_{\mathfrak{p}}^{n})_{k}$ for all $n$.
In particular,
$I_{\mathfrak{p}}^{n}=((I^{n})_{k})_{\mathfrak{p}}$, so that $(I^{n})_{k}=(I^{n})^{u}$ for all positive integer $n$.

Now we consider the case $k=1$.
If $\mathfrak{p}\in \text{Ass}(R/I^{n})$ then there exists a $P\in \text{Ass}_{\mathcal{R}}\frac{\mathcal{R}}{t^{-1}\mathcal{R}}$ (where
$\mathcal{R}$ is the Extended Rees Algebra), such
that $\mathfrak{p}=P\cap R$. Firstly assume $R$ is a domain. By using Dimension Formula we obtain $\text{ht}(\mathfrak{p})=\text{ht}(P)-1+t$, where
$t:=\text{tr.deg}_{\frac{R}{\mathfrak{p}}}\frac{\mathcal{R}}{P}=\text{tr.deg}_{\frac{R_{\mathfrak{p}}}{ \mathfrak{p}R_{\mathfrak{p}  }}}
\frac{\mathcal{R}_{\mathfrak{p}}}{ P_{\mathfrak{p}}}  \leq   \text{tr.deg}_{\frac{R_{\mathfrak{p}}}{ \mathfrak{p}R_{\mathfrak{p}  }}}
\frac{\mathcal{R}_{\mathfrak{p}}}{\mathfrak{p}\mathcal{R}_{\mathfrak{p}}}=s$. Therefore, each associated prime $\mathfrak{p}\in \text{Ass}(R/I^{n})$
is actually a minimal prime of $I^{n}$ for every $n$, as required. The general case may be derived by taking a minimal prime $Q$ of $\mathcal{R}$ such
that $Q \subseteq P$ and $\text{ht}(P)=\text{ht}(P/Q)$. The converse is immediate from $\text{(1)}$.
\end{proof}

\begin{cor}
Let $(R,\mathfrak{m})$ be a quasi-unmixed, analytically unramified
domain satisfying $S_{2}$ condition and let $I$ be an equimultiple ideal such that $\emph{ht}(I)\geq 2$. If $R=\oplus_{n\geq 0}I_{n}t^{n} $ is the
$S2$-ification of the Rees algebra $R[It]$ and
$\emph{ht}(P)<k$ for every $P\in \emph{Ass}(G_{I}(R))$,
then
$$(I^{n})_{k}\subseteq I_{n}.$$
\end{cor}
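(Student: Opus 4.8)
The plan is to reduce the asserted inclusion to a purely ideal-theoretic comparison at the minimal primes of $I$ and then to feed in the structure of the $S_2$-ification. First I would invoke Theorem~\ref{height_ass}: its part (2) says that the hypothesis $\mathrm{ht}(P)<k$ for every $P\in\mathrm{Ass}(G_I(R))$ gives $(I^n)_k=(I^n)^u$ for all $n$. Localizing, Lemma~\ref{equiv_height}(1) applies at each $\mathfrak p\in\mathrm{Min}(R/I)$ and yields $(I^n_{\mathfrak p})_k=I^n_{\mathfrak p}$ for all $n$. Combining these with the primary decomposition and the formula $(I_k)_{\mathfrak p}=(I_{\mathfrak p})_k$ of Proposition~\ref{prym_deco}, I obtain the clean description
\[
(I^n)_k=(I^n)^u=\bigcap_{\mathfrak p\in\mathrm{Min}(R/I)}\big(I^n_{\mathfrak p}\cap R\big),
\]
so that $(I^n)_k$ is exactly the unmixed part of $I^n$, cut out by its behaviour at the minimal primes of $I$.

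Next I would bring in the $S_2$-ification $\widetilde S=\bigoplus_{n\ge0}I_nt^n$ of $R[It]$. Since $\widetilde S\supseteq R[It]$ one has $I^n\subseteq I_n$, hence $I^n_{\mathfrak p}\subseteq (I_n)_{\mathfrak p}$ and therefore $I^n_{\mathfrak p}\cap R\subseteq (I_n)_{\mathfrak p}\cap R$ for every $\mathfrak p\in\mathrm{Min}(R/I)$. The key structural input I would use is that, because $\widetilde S$ satisfies Serre's condition $S_2$ and $R$ is a quasi-unmixed analytically unramified $S_2$ domain with $\mathrm{ht}(I)\ge2$, each graded component $I_n$ is an unmixed ideal whose associated primes are precisely the minimal primes of $I$; equivalently, $(I_n)_{\mathfrak p}$ is the degree-$n$ piece of the $S_2$-ification of the localized ($\mathfrak pR_{\mathfrak p}$-primary) Rees algebra $R_{\mathfrak p}[I_{\mathfrak p}t]$, and
\[
I_n=\bigcap_{\mathfrak p\in\mathrm{Min}(R/I)}\big((I_n)_{\mathfrak p}\cap R\big).
\]
This is the point at which the hypotheses $\mathrm{ht}(I)\ge2$, the $S_2$-ness of $R$, and analytic unramifiedness enter, and it is essentially the content of Ciuperc\u{a}'s description of the $S_2$-ification \cite{C}.

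With both intersection formulas in hand the conclusion is immediate: intersecting the inclusions $I^n_{\mathfrak p}\cap R\subseteq (I_n)_{\mathfrak p}\cap R$ over $\mathfrak p\in\mathrm{Min}(R/I)$ gives
\[
(I^n)_k=\bigcap_{\mathfrak p}\big(I^n_{\mathfrak p}\cap R\big)\subseteq\bigcap_{\mathfrak p}\big((I_n)_{\mathfrak p}\cap R\big)=I_n .
\]

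I expect the main obstacle to be the structural claim in the middle paragraph, namely that the graded pieces $I_n$ of the $S_2$-ification are unmixed with $\mathrm{Ass}(R/I_n)\subseteq\mathrm{Min}(R/I)$ and localize correctly at the minimal primes of $I$. This requires translating the $S_2$ condition on the graded ring $\widetilde S$ into unmixedness of its $R$-components, controlling the height-one primes of $R[It]$ lying over minimal primes of $I$ (where $\mathrm{ht}(I)\ge2$ is exactly what rules out spurious height-one contributions coming from $R$ itself), and checking that forming the $S_2$-ification commutes with localization at those primes. Once this is granted, the unmixedness of $(I^n)_k$ supplied by Theorem~\ref{coef_unmixed} together with Proposition~\ref{prym_deco} makes the comparison of the two intersections routine.
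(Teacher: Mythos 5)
Your proposal is correct and follows essentially the same route as the paper: the paper's entire proof is the citation of Ciuperc\u{a} \cite[Proposition 2.10]{C}, which supplies precisely the structural facts about the graded components $I_{n}$ of the $S_{2}$-ification that you isolate in your middle paragraph (unmixedness of $I_{n}$ with $\text{Ass}(R/I_{n})\subseteq \text{Min}(R/I)$ and compatibility with localization at the minimal primes, with $\text{ht}(I)\geq 2$ and equimultiplicity ruling out other height-one contributions), while the reduction $(I^{n})_{k}=(I^{n})^{u}$ via part (2) of Theorem \ref{height_ass} is exactly how the hypothesis on $\text{Ass}(G_{I}(R))$ enters. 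Your expansion merely makes explicit the intersection formulas behind the paper's ``follows directly,'' so there is no substantive gap beyond what the paper itself delegates to \cite{C}.
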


\begin{proof}
It follows directly from \cite[Proposition 2.10]{C}.
\end{proof}

The Theorem \ref{height_ass} derives the following result, firstly introduced by Noh and Vasconcelos \cite[Theorem 2.5]{NV} for the less general case which
$R$ is a Cohen-Macaulay ring.

\begin{cor}
Let $R$ be a quasi-unmixed ring satisfying $S_{2}$ and $I$ an equimultiple ideal containing a regular element.
If $R[It]$ satisfies $S_{2}$, then all the powers $I^{n}$ are unmixed ideals.
\end{cor}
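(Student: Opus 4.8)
The plan is to reduce the conclusion to the single assertion that the associated graded ring $G_{I}(R)$ satisfies $S_{1}$, after which the results already proved close the argument. Indeed, suppose we knew that $G_{I}(R)$ is $S_{1}$. Since $I$ is equimultiple, so is every power $I^{n}$ (the height and the analytic spread are unchanged by taking powers), so the setup of Theorem \ref{shah'thm_for_equi} applies to each $I^{n}$. By the final assertion of Theorem \ref{height_ass}, the $S_{1}$ property of $G_{I}(R)$ is equivalent to $(I^{n})_{1}=I^{n}$ for all $n$. On the other hand, Theorem \ref{coef_unmixed} guarantees that every equimultiple coefficient ideal is unmixed, so $(I^{n})_{1}$ is unmixed. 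Combining, $I^{n}=(I^{n})_{1}$ is unmixed for every $n$, which is exactly the desired conclusion. Thus all the content is in passing from ``$R[It]$ is $S_{2}$'' to ``$G_{I}(R)$ is $S_{1}$''.

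To obtain the $S_{1}$ property, first I would pass to the extended Rees algebra $\mathcal{R}=R[It,t^{-1}]$. Because $I$ contains a regular element, $t^{-1}$ is a homogeneous nonzerodivisor on $\mathcal{R}$, and one has the standard identification $\mathcal{R}/t^{-1}\mathcal{R}\cong G_{I}(R)$. Since quotienting an $S_{2}$ ring by a nonzerodivisor yields an $S_{1}$ ring, it suffices to prove that $\mathcal{R}$ itself satisfies $S_{2}$. As $R$ is quasi-unmixed, so is $\mathcal{R}$ (of dimension $d+1$), which is the setting in which Serre's conditions are stable and can be tracked through graded short exact sequences.

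The hard part will be transferring $S_{2}$ from the Rees algebra $R[It]$ to the extended Rees algebra $\mathcal{R}$, that is, controlling depth at the primes lying over the irrelevant ideal, where the two algebras genuinely differ. I would carry this out with the depth lemma applied to the graded exact sequences
\[ 0 \longrightarrow R[It]_{+} \longrightarrow R[It] \longrightarrow R \longrightarrow 0 \]
and
\[ 0 \longrightarrow IR[It] \longrightarrow R[It] \longrightarrow G_{I}(R) \longrightarrow 0, \]
together with the shift isomorphism $IR[It]\cong R[It]_{+}(-1)$. Localizing at an arbitrary relevant prime and feeding in the $S_{2}$ hypotheses on both $R[It]$ and $R$, the depth lemma produces lower bounds on $\operatorname{depth}(G_{I}(R))_{P}$; quasi-unmixedness and the dimension formula---used exactly as in the proof of Theorem \ref{height_ass}---pin down the heights of the primes $P$ so that these bounds reach $\min(1,\dim (G_{I}(R))_{P})$ at every $P$. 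Once $G_{I}(R)$ is known to be $S_{1}$, the reduction of the first paragraph finishes the proof.
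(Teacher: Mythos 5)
Your argument is correct in outline, and its second half is exactly the paper's: reduce everything to the single implication ``$R[It]$ satisfies $S_{2}$ $\Rightarrow$ $G_{I}(R)$ satisfies $S_{1}$,'' then invoke the final assertion of Theorem \ref{height_ass} to get $(I^{n})_{1}=I^{n}$ for all $n$ and Theorem \ref{coef_unmixed} to see each $(I^{n})_{1}$ is unmixed (noting, as you do, that every power $I^{n}$ is again equimultiple). Where you diverge is the key implication itself: the paper disposes of it in one line by reducing to the local case and citing \cite[Theorem 1.5]{BSV}, whereas you propose to reprove it from scratch. Your mechanism is the standard and correct one --- the sequences $0\to R[It]_{+}\to R[It]\to R\to 0$ and $0\to IR[It]\to R[It]\to G_{I}(R)\to 0$ with the shift isomorphism $IR[It]\cong R[It]_{+}(-1)$ give, at a prime $P\supseteq IR[It]$ with $\mathfrak{p}=P\cap R$, a bound of the shape $\mathrm{depth}\,(G_{I}(R))_{P}\geq \min\bigl(\mathrm{depth}\,R[It]_{P}-1,\ \mathrm{depth}\,R_{\mathfrak{p}}\bigr)$, and the two $S_{2}$ hypotheses close the gap once one knows $\mathrm{ht}(\mathfrak{p})\geq 1$ (because $I$ contains a regular element) and $\mathrm{ht}(P)\geq \dim (G_{I}(R))_{P}+1$ (from $\mathrm{ht}(IR[It])=1$ together with the catenarity and equidimensionality supplied by quasi-unmixedness). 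What your route buys is a self-contained, transparent proof of the black box; what it costs is that the sentence ``quasi-unmixedness and the dimension formula pin down the heights'' is precisely where all the work lives, and it is left unexecuted in your sketch --- that bookkeeping \emph{is} the content of \cite[Theorem 1.5]{BSV}. Two smaller remarks: the detour through ``$\mathcal{R}$ is $S_{2}$'' gains nothing, since off $V(t^{-1})$ one sees localizations of $R[t,t^{-1}]$ where $S_{2}$ of $R$ already suffices, so proving $\mathcal{R}$ is $S_{2}$ is equivalent, given $R$ is $S_{2}$, to proving $G_{I}(R)$ is $S_{1}$ directly; and you should open, as the paper does, with the reduction to the local case (with infinite residue field, as Theorem \ref{shah'thm_for_equi} requires).
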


\begin{proof}
We may assume $R$ is local. It suffices then to use \cite[Theorem 1.5]{BSV} and later apply Theorem \ref{height_ass}.
\end{proof}

\begin{rem}
Grothe, Hermann and Orbanz \emph{\cite[Theorem 4.7]{GHO}} showed that if $I$ is an equimultiple ideal of a Cohen-Macaulay local ring $(R,\mathfrak{m})$,
then the Cohen-Macaulayness of $G_{I}(R)$ implies
the Cohen-Macaulayness of $G_{I^{m}}(R)$, for all $m\geq 1$. Also Shah \emph{\cite[Corollary 5(C)]{S}} showed the same result
when $R$ is just quasi-unmixed but $I$ an
$\mathfrak{m}$-primary ideal.

Below we see that a similar result for the $S_{1}$ condition can be obtained immediately through coefficient ideals.
\end{rem}

\begin{cor}\label{S1property}
Let $(R,\mathfrak{m})$ be quasi-unmixed local ring and $I$ an equimultiple ideal.
If $G_{I}(R)$ satisfies $S_{1}$, then so does $G_{I^{m}}(R)$, for all $m\geq 1$.
\end{cor}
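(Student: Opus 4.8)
The plan is to reduce everything to the characterization of the $S_1$ condition furnished by the ``in particular'' clause of Theorem \ref{height_ass}: for a quasi-unmixed local ring $R$ and an equimultiple ideal $J$, the ring $G_J(R)$ satisfies $S_1$ if and only if $(J^n)_1 = J^n$ for all $n$. I would apply this criterion twice, once to $I$ (to decode the hypothesis) and once to $I^m$ (to obtain the conclusion), the bridge between the two being the identity $(I^m)^n = I^{mn}$.

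First I would record that $I^m$ is again equimultiple, so that the criterion is available for it. Indeed $\sqrt{I^m}=\sqrt{I}$ gives $\operatorname{ht}(I^m)=\operatorname{ht}(I)$ and $\operatorname{Min}(R/I^m)=\operatorname{Min}(R/I)$, while the fiber cone of $I^m$ is the $m$-th Veronese subalgebra of the fiber cone of $I$ and hence has the same Krull dimension, so $s(I^m)=s(I)$. Combining these, $\operatorname{ht}(I^m)=\operatorname{ht}(I)=s(I)=s(I^m)$, whence $I^m$ is equimultiple and Theorem \ref{height_ass} applies to it.

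Next I would invoke the hypothesis. Since $G_I(R)$ satisfies $S_1$, the criterion yields $(I^j)_1 = I^j$ for every $j\geq 1$. The decisive observation is that the first equimultiple coefficient ideal $(J)_1$ is attached intrinsically to the ideal $J$ itself (it is the unique largest member of the set $V_1$ constructed from $J$ and its minimal primes in Theorem \ref{shah'thm_for_equi}), so it does not depend on whether $J$ is regarded as a power of $I$ or of $I^m$. Because $(I^m)^n = I^{mn}$, specializing the equalities above to the indices $j=mn$ gives $((I^m)^n)_1 = (I^{mn})_1 = I^{mn} = (I^m)^n$ for all $n\geq 1$. Applying Theorem \ref{height_ass} to the equimultiple ideal $I^m$, this is precisely the condition equivalent to $G_{I^m}(R)$ satisfying $S_1$, which is what we want.

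There is no genuinely hard analytic step here; the argument is essentially a substitution, observing that the index set $\{\,mn : n\geq 1\,\}$ over which we must verify the coefficient-ideal equalities for $I^m$ is contained in the set $\{\,j : j\geq 1\,\}$ over which the hypothesis already supplies them. The only point requiring a little care, and the mildest possible obstacle, is the preliminary verification that $I^m$ inherits equimultiplicity from $I$ via the identity $s(I^m)=s(I)$ for the analytic spread; once that is in place the conclusion is immediate.
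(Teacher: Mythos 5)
Your proof is correct and is precisely the argument the paper intends: the corollary is stated without proof as an immediate consequence of the criterion in Theorem \ref{height_ass}, namely that $G_J(R)$ satisfies $S_1$ if and only if $(J^n)_1=J^n$ for all $n$, applied to $I^m$ via the substitution $(I^m)^n=I^{mn}$. Your preliminary check that $I^m$ inherits equimultiplicity (via $s(I^m)=s(I)$ and $\sqrt{I^m}=\sqrt{I}$) and your remark that the coefficient ideal $(J)_1$ is intrinsic to $J$ are exactly the points the paper leaves tacit.
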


Due to the above result and \cite[Theorem 1.5]{BSV}, we obtain the following

\begin{cor}\label{cor_S2}
Let $(R,\mathfrak{m})$ be a quasi-unmixed local ring satisfying $S_{2}$ and $I$ an equimultiple ideal containing a regular element.
If $R[It]$ satisfies $S_{2}$, then so does $R[I^{m}t]$, for all $m\geq 1$.
\end{cor}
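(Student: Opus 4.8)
The plan is to translate the Serre condition $S_{2}$ on the Rees algebras $R[I^{m}t]$ into the condition $S_{1}$ on the associated graded rings $G_{I^{m}}(R)$, so that Corollary \ref{S1property} can be applied directly. The passage between these two conditions is exactly what \cite[Theorem 1.5]{BSV} provides: under the present hypotheses ($R$ quasi-unmixed and $S_{2}$, and $I$ equimultiple containing a regular element), it characterizes the $S_{2}$ property of $R[It]$ by the $S_{1}$ property of $G_{I}(R)$. This is the same device already used in the earlier corollary on unmixedness of powers.

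First I would apply \cite[Theorem 1.5]{BSV} to $I$ itself: since $R[It]$ satisfies $S_{2}$ by hypothesis, we conclude that $G_{I}(R)$ satisfies $S_{1}$. Corollary \ref{S1property} then gives at once that $G_{I^{m}}(R)$ satisfies $S_{1}$ for every $m\geq 1$. To finish, I would invoke \cite[Theorem 1.5]{BSV} a second time, now applied to the ideal $I^{m}$, reading it in the reverse direction so as to recover the $S_{2}$ property of $R[I^{m}t]$ from the $S_{1}$ property of $G_{I^{m}}(R)$.

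Before the last step one must confirm that $I^{m}$ still meets the hypotheses of \cite[Theorem 1.5]{BSV}. The ideal $I^{m}$ contains a regular element, for if $a\in I$ is regular then so is $a^{m}\in I^{m}$. It is also equimultiple: the equality $\sqrt{I^{m}}=\sqrt{I}$ gives $\text{ht}(I^{m})=\text{ht}(I)$, while $s(I^{m})=s(I)$ because the fiber cone of $I^{m}$ is the $m$-th Veronese subalgebra of the fiber cone of $I$, a Veronese operation that preserves Krull dimension. Hence $\text{ht}(I^{m})=s(I^{m})$, and the theorem applies to each power.

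The main obstacle is the careful bookkeeping around \cite[Theorem 1.5]{BSV}. One must be certain that it is used as a genuine equivalence --- the easy direction to obtain $G_{I}(R)\ S_{1}$ from $R[It]\ S_{2}$, and the non-trivial direction to pass back from $G_{I^{m}}(R)\ S_{1}$ to $R[I^{m}t]\ S_{2}$ --- and that its hypotheses are inherited by every power $I^{m}$. Once both points are secured, the chain $R[It]\ S_{2}\Rightarrow G_{I}(R)\ S_{1}\Rightarrow G_{I^{m}}(R)\ S_{1}\Rightarrow R[I^{m}t]\ S_{2}$ yields the claim.
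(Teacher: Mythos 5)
Your proposal is correct and is exactly the paper's argument: the paper derives this corollary in one line ``due to Corollary \ref{S1property} and \cite[Theorem 1.5]{BSV}'', i.e.\ precisely your chain $R[It]\ S_{2}\Rightarrow G_{I}(R)\ S_{1}\Rightarrow G_{I^{m}}(R)\ S_{1}\Rightarrow R[I^{m}t]\ S_{2}$, using the BSV result as an equivalence in both directions. Your explicit check that $I^{m}$ inherits the hypotheses (a regular element via $a^{m}$, and equimultiplicity since $\operatorname{ht}(I^{m})=\operatorname{ht}(I)$ and $s(I^{m})=s(I)$) is a detail the paper leaves tacit, and it is carried out correctly.
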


\begin{thm}\label{grade}
Let $(R,\mathfrak{m})$ be a quasi-unmixed local ring and $I$ an equimultiple ideal of analytic spread $s$.
If $\emph{depth}\hspace{0.05cm}G_{I}(R)\geq k$, where
$1\leq k \leq s$,
then $(I^{n})_{j}=(I^{n})^{u}$ for all $n$ and $s+1-k\leq j \leq s$.
\end{thm}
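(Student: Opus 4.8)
The plan is to reduce the global statement to the primary case at each minimal prime of $I$, where the ambient dimension is exactly $s$, and then to apply the primary-case theory of Shah at each such prime. First I would use Proposition \ref{prym_deco}: since $((I^{n})_{j})_{\mathfrak{p}}=(I^{n}_{\mathfrak{p}})_{j}$ for every prime $\mathfrak{p}$, and since both $(I^{n})_{j}$ and $(I^{n})^{u}$ admit primary decompositions supported exactly on the minimal primes $\mathfrak{p}_{1},\dots,\mathfrak{p}_{r}$ of $I$, the equality $(I^{n})_{j}=(I^{n})^{u}$ holds if and only if $(I^{n}_{\mathfrak{p}})_{j}=I^{n}_{\mathfrak{p}}$ for every $\mathfrak{p}\in\text{Min}(R/I)$ and every $n$. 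Fix such a $\mathfrak{p}$. Because $I$ is equimultiple, $\dim R_{\mathfrak{p}}=s$, the localization $R_{\mathfrak{p}}$ is again quasi-unmixed, and $I_{\mathfrak{p}}$ is $\mathfrak{p}R_{\mathfrak{p}}$-primary; thus $G_{I_{\mathfrak{p}}}(R_{\mathfrak{p}})=(R\setminus\mathfrak{p})^{-1}G_{I}(R)$ is an $s$-dimensional graded ring whose degree-zero part $R_{\mathfrak{p}}/I_{\mathfrak{p}}$ is Artinian. Consequently its maximal graded ideal and its irrelevant ideal $(G_{+})_{\mathfrak{p}}$ have the same radical, so that $\text{depth}\,G_{I_{\mathfrak{p}}}(R_{\mathfrak{p}})=\text{grade}((G_{+})_{\mathfrak{p}},\,G_{I_{\mathfrak{p}}}(R_{\mathfrak{p}}))$.

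The crux is to transfer the depth hypothesis from the global ring to each of these localizations, i.e.\ to prove $\text{depth}\,G_{I_{\mathfrak{p}}}(R_{\mathfrak{p}})\geq k$, and for this I would exploit equimultiplicity directly. Since $R/\mathfrak{m}$ is infinite, choose a minimal reduction $(a_{1},\dots,a_{s})$ of $I$ that is a superficial sequence; then $I^{t+1}=(a_{1},\dots,a_{s})I^{t}$ for some $t$, which forces $\sqrt{(a_{1}^{*},\dots,a_{s}^{*})G_{I}(R)}=\sqrt{G_{+}}$ and hence $\text{grade}(G_{+},G_{I}(R))=\text{grade}((a_{1}^{*},\dots,a_{s}^{*}),G_{I}(R))$. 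The key point, which I expect to be the main obstacle, is the equality $\text{grade}(G_{+},G_{I}(R))=\min\{s,\,\text{depth}\,G_{I}(R)\}$: by the Valabrega--Valla criterion together with superficiality, the initial forms $a_{1}^{*},\dots,a_{k}^{*}$ of a superficial minimal reduction form a regular sequence on $G_{I}(R)$ as long as $k\leq\text{depth}\,G_{I}(R)$ and $k\leq s$. Granting this, the hypothesis $\text{depth}\,G_{I}(R)\geq k$ with $k\leq s$ produces a $G_{I}(R)$-regular sequence $a_{1}^{*},\dots,a_{k}^{*}$ lying in $G_{+}$. A regular sequence contained in $G_{+}$ survives inversion of the degree-zero multiplicative set $R\setminus\mathfrak{p}$ (its members avoid every associated prime of $G_{I}(R)$, hence every associated prime of the localization), so it remains regular in $(G_{+})_{\mathfrak{p}}$, giving $\text{depth}\,G_{I_{\mathfrak{p}}}(R_{\mathfrak{p}})\geq k$. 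It is exactly the $s$-generated reduction whose initial forms generate $G_{+}$ up to radical that prevents the usual drop by $\dim(R/I)$ when passing from $\text{depth}$ to $\text{grade}(G_{+},-)$.

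Finally I would convert this local depth into a height bound and invoke the primary theory. Since $R_{\mathfrak{p}}$ is quasi-unmixed, $G_{I_{\mathfrak{p}}}(R_{\mathfrak{p}})$ is equidimensional of dimension $s$, so $\text{ht}(Q)+\dim(G_{I_{\mathfrak{p}}}(R_{\mathfrak{p}})/Q)=s$ for every prime $Q$; combined with $\dim(G_{I_{\mathfrak{p}}}(R_{\mathfrak{p}})/Q)\geq\text{depth}\,G_{I_{\mathfrak{p}}}(R_{\mathfrak{p}})\geq k$ for $Q\in\text{Ass}(G_{I_{\mathfrak{p}}}(R_{\mathfrak{p}}))$, this yields $\text{ht}(Q)\leq s-k$ for all such $Q$. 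Hence $\text{ht}(Q)<j$ for every $j$ with $s+1-k\leq j\leq s$, and Shah's \cite[Theorem 4]{S}, applied to the $\mathfrak{p}R_{\mathfrak{p}}$-primary ideal $I_{\mathfrak{p}}$, gives $(I^{n}_{\mathfrak{p}})_{j}=I^{n}_{\mathfrak{p}}$ for all $n$ throughout that range. Running this over all minimal primes $\mathfrak{p}$ of $I$ and invoking the reduction of the first paragraph produces $(I^{n})_{j}=(I^{n})^{u}$ for all $n$ and all $j$ with $s+1-k\leq j\leq s$, as required.
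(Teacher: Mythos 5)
Your argument is correct, and it diverges from the paper's proof exactly where the paper is most terse. The paper disposes of Theorem \ref{grade} in three lines: depth is transferred to each localization ``by flatness of $G\otimes_{R}R_{\mathfrak{p}}$ over $G$'', Shah's primary-case depth theorem \cite[Theorem 5]{S} is then quoted at each minimal prime, and the equality $(I^{n})_{j}=(I^{n})^{u}$ is extracted from Proposition \ref{prym_deco} together with the unmixedness of coefficient ideals (Theorem \ref{coef_unmixed}); your opening and closing paragraphs match this frame. Your middle paragraph, however, replaces the flatness one-liner with an actual proof, and rightly so: depth at the maximal homogeneous ideal does not localize by flatness alone, since for $\mathfrak{p}\neq \mathfrak{m}$ the maximal graded ideal of $G_{I}(R)$ generates the unit ideal in $G_{I_{\mathfrak{p}}}(R_{\mathfrak{p}})$ (indeed, if $G_{I}(R)$ is Cohen--Macaulay of depth $d$, any localization at a prime of height less than $d$ has strictly smaller depth, so the paper's assertion ``for each $\mathfrak{p}$ prime'' cannot be taken literally; it is only salvageable because here $\operatorname{ht}\mathfrak{p}=s\geq k$ and $I$ is equimultiple). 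Your repair --- a superficial minimal reduction $(a_{1},\dots,a_{s})$, the containment $G_{+}\subseteq \sqrt{(a_{1}^{*},\dots,a_{s}^{*})G}$ furnished by equimultiplicity, and the Sally-machine/Valabrega--Valla fact that $a_{1}^{*},\dots,a_{k}^{*}$ is a $G$-regular sequence once $\operatorname{depth}G_{I}(R)\geq k$ --- produces a regular sequence inside $G_{+}$, which does survive localization; this is precisely the content the paper elides, and its own Remark \ref{rem_grade} tacitly concedes the point by restating the hypothesis as $\operatorname{depth}G_{I}(R)_{+}\geq k$. The second divergence is that at each minimal prime you derive the primary-case conclusion from \cite[Theorem 4]{S}, converting the local depth into the bound $\operatorname{ht}(Q)\leq s-k\leq j-1$ on associated primes of $G_{I_{\mathfrak{p}}}(R_{\mathfrak{p}})$ via equidimensionality and catenarity of this ring (valid since $R_{\mathfrak{p}}$ is quasi-unmixed); in effect you re-prove \cite[Theorem 5]{S} rather than cite it, which lengthens the argument but makes it self-contained modulo Theorem 4. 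Two small points of hygiene: the infinite-residue-field hypothesis (standing in Theorem \ref{shah'thm_for_equi} but omitted from the statement of Theorem \ref{grade}) is genuinely needed for your superficial sequence and should be flagged or removed by the usual base change $R\to R[X]_{\mathfrak{m}R[X]}$, and the survival of the regular sequence under localization is cleaner via Koszul homology (flatness plus $G_{\mathfrak{p}}/(a_{1}^{*},\dots,a_{k}^{*})G_{\mathfrak{p}}\neq 0$, its degree-zero piece being $R_{\mathfrak{p}}/I_{\mathfrak{p}}$) than via the avoidance-of-associated-primes phrasing you give, which is loose for sequences of length greater than one.
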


\begin{proof}
By using the fact that $G\otimes_{R}R_{\mathfrak{p}}$ is flat over $G$, one can conclude that
$\text{depth}\hspace{0.05cm}G_{I}(R)\geq k$ implies $\text{depth}\hspace{0.05cm}G_{I_{\mathfrak{p}}}(R_{\mathfrak{p}})\geq k$
for each $\mathfrak{p}$ prime. So by \cite[Theorem 5]{S}, we have $I_{\mathfrak{p}}^{n}=(I_{\mathfrak{p}}^{n})_{j}$ for all $n$
and each minimal prime $\mathfrak{p}$ of $I$. Hence, $I_{\mathfrak{p}}^{n}=((I^{n})_{j})_{\mathfrak{p}}$ and therefore
$(I^{n})_{j}=(I^{n})^{u}$ for all positive integer $n$, as all coefficients ideals are unmixed ideals.
\end{proof}

\begin{rem}\label{rem_grade}
As it can be seen in the above proof, if $I$ is an arbitrary equimultiple ideal and $\emph{depth}\hspace{0.05cm}G_{I}(R)_{+}\geq k$ one has
$I_{\mathfrak{p}}^{n}=(I_{\mathfrak{p}}^{n})_{j}$ for all $n$, $s+1-k\leq j \leq s$
and each minimal prime $\mathfrak{p}$ of $I$.
\end{rem}



\begin{prop}\label{complet_inter}
Let $(R,\mathfrak{m})$ be a quasi-unmixed local ring satisfying the $S_{s+1}$ condition and $I$ an ideal with analytic spread $s$ such that
$\emph{grade}\hspace{0.05cm}I=s(I)$.
Suppose $s(I_{\mathfrak{p}})=\mu(I_{\mathfrak{p}})$ for every $\mathfrak{p} \in \emph{Min}(R/I)$.
Then $G_{I}(R)$ satisfies $S_{1}$.
\end{prop}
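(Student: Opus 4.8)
The plan is to show that the hypotheses force $I$ to be, at the level of its powers, a complete intersection, and then to read off the $S_1$ property of $G_I(R)$ through the characterization ``$G_I(R)$ satisfies $S_1$ iff $(I^n)_1 = I^n$ for all $n$'' of Theorem \ref{height_ass}. Throughout one may assume $R/\mathfrak{m}$ is infinite after the faithfully flat base change $R \to R(X) = R[X]_{\mathfrak{m}R[X]}$, which preserves $S_{s+1}$, the analytic spread, grade, the numbers $\mu(I_{\mathfrak{p}})$, and unmixedness of powers. Since $\text{grade}\,I = s(I) = s$ and always $\text{grade}\,I \le \text{ht}\,I \le s(I)$, we get $\text{grade}\,I = \text{ht}\,I = s(I) = s$; so $I$ is equimultiple, every $\mathfrak{p}\in\text{Min}(R/I)$ has height $s$, and $I$ contains a regular sequence of length $s$.

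First I would record the local structure. Fix $\mathfrak{p}\in\text{Min}(R/I)$; then $\dim R_{\mathfrak{p}} = s$ by the Ratliff--Rush theorem quoted earlier, and $S_{s+1}$ gives $\text{depth}\,R_{\mathfrak{p}} \ge \min(s+1,s) = s$, so $R_{\mathfrak{p}}$ is Cohen--Macaulay. As $I_{\mathfrak{p}}$ is $\mathfrak{p}R_{\mathfrak{p}}$-primary with $\mu(I_{\mathfrak{p}}) = s(I_{\mathfrak{p}}) = s = \dim R_{\mathfrak{p}}$, it is a parameter ideal, hence generated by a system of parameters forming a regular sequence, and $G_{I_{\mathfrak{p}}}(R_{\mathfrak{p}}) \cong (R_{\mathfrak{p}}/I_{\mathfrak{p}})[T_1,\dots,T_s]$ is Cohen--Macaulay. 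By the depth form of Shah's theorem (\cite[Theorem 5]{S}, as used in Theorem \ref{grade} and Remark \ref{rem_grade}) this yields $(I_{\mathfrak{p}}^n)_1 = I_{\mathfrak{p}}^n$ for every $n$. Using $((I^n)_1)_{\mathfrak{p}} = (I_{\mathfrak{p}}^n)_1$ from Proposition \ref{prym_deco} together with the unmixedness of coefficient ideals (Theorem \ref{coef_unmixed}), I would then conclude $(I^n)_1 = \bigcap_{\mathfrak{p}}\big(((I^n)_1)_{\mathfrak{p}}\cap R\big) = \bigcap_{\mathfrak{p}}\big(I_{\mathfrak{p}}^n\cap R\big) = (I^n)^u$ for all $n$.

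The main step, and the hard part, is to upgrade $(I^n)^u$ to $I^n$, i.e. to prove every power $I^n$ is unmixed. Here I would bring in a minimal reduction: $s$ general elements of $I$ give a minimal reduction $J = (a_1,\dots,a_s)\subseteq I$ which, because $\text{grade}\,I = s$, may be taken to be a regular sequence, so $J$ is a height-$s$ complete intersection with $\sqrt{J} = \sqrt{I}$ and $G_J(R)\cong (R/J)[T_1,\dots,T_s]$. From $S_{s+1}$ one checks $R/J$ is $S_1$: for $\mathfrak{q}\supseteq J$ with $\dim(R_{\mathfrak{q}}/J_{\mathfrak{q}})\ge 1$ one has $\dim R_{\mathfrak{q}}\ge s+1$, whence $\text{depth}\,R_{\mathfrak{q}}\ge s+1$ and $\text{depth}(R_{\mathfrak{q}}/J_{\mathfrak{q}}) = \text{depth}\,R_{\mathfrak{q}} - s \ge 1$. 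Since all minimal primes of $J$ have height $s$, $R/J$ is unmixed, and the freeness of $J^i/J^{i+1}$ over $R/J$ forces $\text{Ass}(R/J^n)\subseteq\text{Ass}(R/J) = \text{Min}(R/J)$, so each $J^n$ is unmixed. The most delicate point is the comparison at minimal primes: since $I_{\mathfrak{p}}$ is a parameter ideal in the Cohen--Macaulay ring $R_{\mathfrak{p}}$, its fiber cone is a polynomial ring and it admits no proper reduction, so the reduction $J_{\mathfrak{p}}\subseteq I_{\mathfrak{p}}$ must equal $I_{\mathfrak{p}}$. Hence $I_{\mathfrak{p}}^n = J_{\mathfrak{p}}^n$ for all minimal $\mathfrak{p}$, giving $(I^n)^u = \bigcap_{\mathfrak{p}}(J_{\mathfrak{p}}^n\cap R) = (J^n)^u = J^n$; combined with $J^n\subseteq I^n\subseteq (I^n)^u$ this yields $I^n = J^n$, so $I$ is a complete intersection and every $I^n$ is unmixed.

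Putting the pieces together, $(I^n)_1 = (I^n)^u = I^n$ for all $n$, so Theorem \ref{height_ass} gives that $G_I(R)$ satisfies $S_1$; equivalently, one may simply observe $G_I(R)\cong (R/I)[T_1,\dots,T_s]$ with $R/I$ an $S_1$ ring. I expect the obstacle to be concentrated entirely in the unmixedness of the powers --- specifically in justifying that a general minimal reduction can be chosen to be a regular sequence, and in the ``no proper reduction'' comparison $J_{\mathfrak{p}} = I_{\mathfrak{p}}$ --- while the passage between local and global coefficient ideals is routine given Proposition \ref{prym_deco} and Theorem \ref{coef_unmixed}.
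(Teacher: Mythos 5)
Your proof is correct and follows essentially the same route as the paper's: a minimal reduction $J$ of $I$ generated by a regular sequence (available since $\mathrm{grade}\,I=s(I)$), the identification $J_{\mathfrak{p}}=I_{\mathfrak{p}}$ at each minimal prime because $I_{\mathfrak{p}}$ admits no proper reduction, the $S_{s+1}$ condition forcing unmixedness, the complete-intersection exact-sequence argument for unmixedness of the powers, and the coefficient-ideal criterion of Theorem \ref{height_ass}. The differences are only in ordering and bookkeeping: the paper deduces $I=J$ immediately from the unmixedness of $J$ and then concludes via Theorem \ref{grade}, whereas you first establish unmixedness of all $J^n$ and localize Shah's Theorem 5 directly (which is exactly how Theorem \ref{grade} is proved), so the ingredients coincide.
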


\begin{proof}
By hypothesis, there exists a minimal reduction $J$ of $I$ generated by
a regular sequence of length $s$ and $J_{\mathfrak{p}}=I_{\mathfrak{p}}$ for each $\mathfrak{p}\in \text{Min}(R/I)$ since $I_{\mathfrak{p}}$
has no proper reduction. Once $R$ satisfies the $S_{s+1}$ condition we have $J$ is unmixed. Hence, $I=J$ is generated by a regular sequence of length $s$. In particular the generating set of $I$ form a quasi-regular sequence, thus
$\text{grade}\hspace{0.05cm} G_{I}(R)_{+}\geq s$. Moreover there is an isomorphism of graded rings
$$
A=(R/I)[X_{1},..,X_{s}]\cong G_{I}(R),
$$
where $A$ is a polynomial ring with coefficients in $R/I$. We can then conclude that
$\text{Ass}_{R}(I^{i}/I^{i+1})=\text{Ass}_{R}(R/I)$ for each $i$. By using the exact sequence
$$
0 \rightarrow I^{i}/I^{i+1} \rightarrow R/I^{i+1} \rightarrow R/I^{i} \rightarrow 0,
$$
it follows by induction that $\text{Ass}_{R}(R/I^{n}) = \text{Ass}_{R}(R/I)$ for $n\geq 1$.
Since $I$ is unmixed, we conclude that $I^{n}=(I^{n})^{u}$ for all $n$. The result follows then by
Theorem \ref{grade} and Theorem \ref{height_ass}.
\end{proof}

An ideal $I$ is a \emph{locally complete intersection} if $\text{ht}(I_{\mathfrak{p}})=\mu(I_{\mathfrak{p}})$ for each $\mathfrak{p}\in \text{Ass}(R/I)$.

\begin{cor}
Let $(R,\mathfrak{m})$ be a quasi-unmixed local ring satisfying the $S_{s+1}$ condition and $I$ an ideal with analytic spread $s$ such that
$\emph{grade}\hspace{0.05cm}I=s(I)$.
Suppose $I$ is a locally complete intersection.
Then $G_{I}(R)$ satisfies $S_{1}$.
\end{cor}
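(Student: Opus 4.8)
The plan is to deduce this directly from Proposition \ref{complet_inter}. The hypotheses of the corollary and of that proposition differ only in the condition imposed on the localizations at the minimal primes: the proposition asks that $s(I_{\mathfrak{p}})=\mu(I_{\mathfrak{p}})$ for every $\mathfrak{p}\in\text{Min}(R/I)$, whereas here one is handed the locally complete intersection condition $\text{ht}(I_{\mathfrak{p}})=\mu(I_{\mathfrak{p}})$ for every $\mathfrak{p}\in\text{Ass}(R/I)$. Everything else in the two hypotheses (quasi-unmixedness, the $S_{s+1}$ condition, analytic spread $s$, and $\text{grade}\,I=s(I)$) is identical, so the whole content of the argument is to bridge these two conditions, after which the conclusion is immediate.

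The key observation I would use is that for a minimal prime $\mathfrak{p}$ of $I$ the localized ideal $I_{\mathfrak{p}}$ is primary to the maximal ideal $\mathfrak{p}R_{\mathfrak{p}}$ of the local ring $R_{\mathfrak{p}}$. For an ideal primary to the maximal ideal the analytic spread equals the dimension of the ring, so $s(I_{\mathfrak{p}})=\dim R_{\mathfrak{p}}$; and since $\sqrt{I_{\mathfrak{p}}}=\mathfrak{p}R_{\mathfrak{p}}$ one also has $\text{ht}(I_{\mathfrak{p}})=\text{ht}(\mathfrak{p}R_{\mathfrak{p}})=\dim R_{\mathfrak{p}}$. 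Hence $s(I_{\mathfrak{p}})=\text{ht}(I_{\mathfrak{p}})$ for every $\mathfrak{p}\in\text{Min}(R/I)$.

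Next I would invoke the inclusion $\text{Min}(R/I)\subseteq\text{Ass}(R/I)$. The locally complete intersection hypothesis then yields $\text{ht}(I_{\mathfrak{p}})=\mu(I_{\mathfrak{p}})$ for every minimal prime $\mathfrak{p}$ of $I$. Combining this with the previous step gives $s(I_{\mathfrak{p}})=\mu(I_{\mathfrak{p}})$ for all $\mathfrak{p}\in\text{Min}(R/I)$, which is exactly the extra hypothesis required by Proposition \ref{complet_inter}. Since the remaining hypotheses of that proposition hold by assumption, it applies verbatim and shows that $G_{I}(R)$ satisfies $S_{1}$.

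There is no serious obstacle here; the corollary is essentially a repackaging of Proposition \ref{complet_inter}. The one point that must be invoked is the standard fact that an ideal primary to the maximal ideal of a local ring has analytic spread equal to the dimension of that ring. This is what forces $s(I_{\mathfrak{p}})=\text{ht}(I_{\mathfrak{p}})$ at each minimal prime and thereby converts the height-based locally complete intersection condition into the analytic-spread condition demanded by the proposition.
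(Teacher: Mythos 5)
Your proof is correct and matches the paper's intent exactly: the paper states this corollary without proof, treating it as an immediate consequence of Proposition \ref{complet_inter}, and your bridge --- that at each $\mathfrak{p}\in\text{Min}(R/I)$ the ideal $I_{\mathfrak{p}}$ is $\mathfrak{p}R_{\mathfrak{p}}$-primary, so $s(I_{\mathfrak{p}})=\dim R_{\mathfrak{p}}=\text{ht}(I_{\mathfrak{p}})=\mu(I_{\mathfrak{p}})$, with the locally complete intersection hypothesis applied via $\text{Min}(R/I)\subseteq\text{Ass}(R/I)$ --- is precisely the intended reduction. Nothing is missing.
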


\begin{rem}
In the setup of Proposition \ref{complet_inter}, we obtain $(I^{n})_{s}= \cdot \cdot \cdot =(I^{n})_{1}=I^{n}$ for all positive integer $n$.
\end{rem}

\begin{defi}
Let $R$ be a local ring and $I$ a proper ideal of $R$.
The reduction number $r(I)$ of $I$ is defined to be
$$
r(I)=\emph{min} \{ n \ | \ \text{there exists a minimal reduction } J \text{ of } I \text{such that } I^{n+1} =JI^{n} \}.
$$
\end{defi}

\begin{prop}
Let $(R,\mathfrak{m})$ be a quasi-unmixed local ring satisfying the $S_{s+1}$ condition and $I$ an ideal such that $\emph{grade}\hspace{0.05cm}I=s(I)$.
Suppose some power $I^{i}$ with $r(I^{i})\leq 1$ is an unmixed ideal.
If $\emph{grade}\hspace{0.05cm}G_{I}(R)_{+}\geq k$, where $1\leq k\leq s$,
then $(I^{n})_{j}=I^{n}$ for all $n$ and $s+1-k\leq j \leq s$;
\end{prop}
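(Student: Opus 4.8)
The plan is to separate the two facts hidden in $(I^n)_j=I^n$: that the grade hypothesis alone yields $(I^n)_j=(I^n)^u$, and that the reduction hypothesis on $I^i$ forces $(I^n)^u=I^n$, i.e.\ every power of $I$ is unmixed. Note first that $\text{grade}\,I=s(I)=s$ forces $\text{ht}\,I=s$, so $I$ is equimultiple, and as $s\ge1$ it is regular. The first half is then essentially Theorem~\ref{grade}: the condition $\text{grade}\,G_I(R)_+\ge k$ is exactly the depth hypothesis of Remark~\ref{rem_grade}, giving $I_{\mathfrak p}^n=(I_{\mathfrak p}^n)_j$ for all $n$, all $s+1-k\le j\le s$, and every minimal prime $\mathfrak p$ of $I$. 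Proposition~\ref{prym_deco} rewrites this as $((I^n)_j)_{\mathfrak p}=I_{\mathfrak p}^n$, and since $(I^n)_j$ is unmixed by Theorem~\ref{coef_unmixed} we obtain $(I^n)_j=(I^n)^u$ for the stated range of $j$. Everything therefore reduces to proving that all powers of $I$ are unmixed.

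For the unmixedness I would exploit the power $L:=I^i$. After passing to $R[t]_{\mathfrak m R[t]}$ so that the residue field is infinite, a minimal reduction $J$ of $L$ is generated by $s=s(L)$ elements; as $J$ is a reduction we have $\text{grade}\,J=\text{grade}\,L=s$, so those $s$ generators form a regular sequence and $J$ is a complete intersection, while $r(L)\le1$ gives $L^2=JL$ and hence $L^m=J^{m-1}L$ for all $m\ge1$. Because $R$ satisfies $S_{s+1}$ and $J$ is generated by a regular sequence of length $s$, the ring $R/J$ satisfies $S_1$ and $G_J(R)\cong(R/J)[X_1,\dots,X_s]$. The aim at this stage, in the spirit of Proposition~\ref{complet_inter}, is to show that $G_{I^i}(R)=G_L(R)$ satisfies $S_1$: one studies the graded piece $L^m/L^{m+1}=J^{m-1}L/J^mL$ through the natural surjection $(J^{m-1}/J^m)\otimes_{R/J}(L/JL)\to J^{m-1}L/J^mL$, whose source is a free $R/J$-module tensored with the conormal module $L/L^2$, in order to force $\text{Ass}(L^m/L^{m+1})\subseteq\text{Min}(R/L)$. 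Granting this, the exact sequences $0\to L^m/L^{m+1}\to R/L^{m+1}\to R/L^m\to0$ together with the unmixedness of $L$ give, by induction on $m$, that every $(I^i)^m=I^{im}$ is unmixed (equivalently, by the $S_1$ characterisation of Theorem~\ref{height_ass} applied to $I^i$, that $((I^i)^m)_1=(I^i)^m$ for all $m$).

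It then remains to pass from the subsequence $\{I^{im}\}$ to all powers. By Brodmann's theorem $\text{Ass}(R/I^n)$ is independent of $n$ for $n\gg0$; since $\{im\}$ is cofinal and $\text{Ass}(R/I^{im})=\text{Min}(R/I)$ for $m\gg0$, the stable value is $\text{Min}(R/I)$, so $(I^n)^u=I^n$ for all $n\gg0$. Finally $\text{grade}\,G_I(R)_+\ge k\ge1$ forces $(I^n)^*=I^n$ for every $n$, that is, all powers are Ratliff--Rush; as $I$ is regular and $(I^n)^u=I^n$ for $n\gg0$, the inclusion $\text{Ass}(R/(I^n)^*)\subseteq\text{Ass}(R/(I^{n+1})^*)$ recorded in the earlier remark upgrades this to $(I^n)^u=I^n$ for all $n$. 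Combined with the first paragraph this yields $(I^n)_j=(I^n)^u=I^n$ for all $n$ and $s+1-k\le j\le s$.

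The hard part will be the $S_1$ statement for $G_{I^i}(R)$. Over a ring that is only $S_{s+1}$, rather than Cohen--Macaulay, one cannot simply invoke the Valabrega--Valla criterion, and the delicate point is to check that the surjection onto $J^{m-1}L/J^mL$ introduces no embedded primes, i.e.\ that the conormal module $L/L^2$ has no embedded primes over $R/L$; this is exactly where $r(L)\le1$, the unmixedness of $L=I^i$, and the $S_1$ property of $R/J$ have to be used together. Once that is in place, the passage to all powers via Brodmann stability and the Ratliff--Rush propagation is routine given the machinery already developed in the paper.
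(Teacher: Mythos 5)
Your overall frame matches the paper's: the opening reduction via Theorem \ref{grade} (with Remark \ref{rem_grade}, Proposition \ref{prym_deco} and Theorem \ref{coef_unmixed}) to proving $(I^{n})^{u}=I^{n}$ for all $n$ is exactly the paper's first step, and your end-game is the paper's as well --- $\operatorname{grade} G_{I}(R)_{+}\geq 1$ gives $(I^{n})^{*}=I^{n}$ for all $n$ by \cite[(1.2)]{HLS}, hence $\operatorname{Ass}(R/I^{n})\subseteq \operatorname{Ass}(R/I^{n+1})$ by \cite[Lemma 6.6]{PZ}, and unmixedness of the cofinal family $\{I^{im}\}_{m}$ then propagates to every power (your appeal to Brodmann is redundant once you have the ascending chain). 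But the middle step --- that every $(I^{i})^{m}$ is unmixed --- is precisely where your proposal has a genuine gap, and you say so yourself: you only ``grant'' that $\operatorname{Ass}(L^{m}/L^{m+1})\subseteq \operatorname{Min}(R/L)$ and defer the ``hard part.'' Moreover the route you sketch cannot work as stated: a surjection $(J^{m-1}/J^{m})\otimes_{R/J}(L/JL)\to L^{m}/L^{m+1}$ gives no control of $\operatorname{Ass}$ of the target, since associated primes of a quotient are not contained in those of the source, and nothing available yields the absence of embedded primes of the conormal module $L/L^{2}$ from $r(L)\leq 1$ and unmixedness of $L$ alone.

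The paper closes this step by choosing a different short exact sequence, which makes the difficulty disappear. With $L=I^{i}$ and $J$ a minimal reduction of $L$ generated by a regular sequence of length $s$ (unmixed because $R$ is $S_{s+1}$), the relation $L^{m+1}=J^{m}L$ coming from $r(L)\leq 1$ gives
$$
0\longrightarrow J^{m}/J^{m}L\longrightarrow R/L^{m+1}\longrightarrow R/J^{m}\longrightarrow 0 .
$$
Here $J^{m}/J^{m}L\cong (J^{m}/J^{m+1})\otimes_{R/J}(R/L)\cong (R/L)^{N_{m}}$, because $J^{m}/J^{m+1}$ is $R/J$-free ($J$ is a complete intersection) and $J\subseteq L$; so $\operatorname{Ass}(J^{m}/J^{m}L)=\operatorname{Ass}(R/L)=\operatorname{Min}(R/L)$ by the hypothesis that $L$ is unmixed. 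Likewise $\operatorname{Ass}(R/J^{m})=\operatorname{Ass}(R/J)=\operatorname{Min}(R/J)=\operatorname{Min}(R/L)$, since powers of a complete intersection have the associated primes of $J$ itself and $\sqrt{J}=\sqrt{L}$. Hence $\operatorname{Ass}(R/L^{m+1})\subseteq \operatorname{Min}(R/L)$ for every $m$, with no $S_{1}$ statement for $G_{L}(R)$ and no conormal-module analysis needed. In short: the left term of the paper's sequence is a direct sum of copies of $R/L$, not the graded piece $L^{m}/L^{m+1}$ you were trying to tame. Substituting this sequence for yours repairs the argument; as written, your proof leaves its central step unproven.
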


\begin{proof}
Because of Theorem \ref{grade}, it suffices to show $(I^{n})^{u} =I^{n}$ for all $n$.
The hypothesis $\text{grade}\hspace{0.05cm}I=s(I)$ gives the existence of a minimal reduction $J$ of $I$ generated by
a regular sequence of length $s$. Since $R$ satisfies $S_{s+1}$, the ideal $J$ is unmixed.
Further, by \cite[(1.2)]{HLS}, we have $(I^{n})^{*}=I^{n}$ for all $n$.
Hence, $\text{Ass}(R/I^{n})\subseteq \text{Ass}(R/I^{n+1})$ for all $n$, by \cite[Lemma 6.6]{PZ}.
By hypothesis we then get $\text{Ass}(R/I^{n})=\text{Min}(R/I^{n})$ for all $n\leq i$.
By assumption on $r(I^{i})$, there exists a minimal reduction $J$ of $I^{i}$ such that $JI^{i}=(I^{i})^{2}$.
Now consider the exact sequence
$$
0\rightarrow J/JI^{i} \rightarrow R/JI^{i} \rightarrow R/J \rightarrow 0,
$$
where $J/JI^{i}\simeq (R/I^{i})^{s}$.
Since $J$ is an unmixed ideal one may then conclude that $(I^{i})^{2}$ is unmixed.
By considering the following
exact sequence
$$
0\rightarrow J^{2}/J^{2}I^{i} \rightarrow R/(I^{i})^{3} \rightarrow R/J^{2} \rightarrow 0,
$$
we get $(I^{i})^{3}$ is unmixed. We have then obtained that $I^{n}$ is unmixed for infinitely many $n$.
Therefore, all the powers $I^{n}$ are unmixed ideals.
\end{proof}

\begin{lem}\label{lem_alg_str}
Let $(R,\mathfrak{m})$ be a quasi-unmixed local ring of infinite residue field and $I$ an equimultiple ideal. For all $N \geq 1$ and all reduction
$x=x_{1},...,x_{t}$ of $I^{N}$,
we have
$$
(I^{N+1}:x_{1},...,x_{k})\subseteq I_{k}, \ for \ 1\leq k\leq d.
$$
\end{lem}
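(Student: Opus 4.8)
The plan is to prove the inclusion element by element: for each $z\in(I^{N+1}:x_1,\dots,x_k)$ I would show that the ideal $(I,z)$ belongs to the set $V_k$ constructed in the proof of Theorem \ref{shah'thm_for_equi}; since $I_k$ is the unique maximal member of $V_k$, this gives $z\in I_k$, and as $I_k$ is an ideal containing every such $z$ the whole colon ideal sits inside $I_k$. First I would dispose of the trivial half: because $x=x_1,\dots,x_t$ is a reduction of $I^N$ we have $x_i\in I^N$, so $Ix_i\subseteq I^{N+1}$ and hence $I\subseteq(I^{N+1}:x_1,\dots,x_k)$. It then remains to verify the multiplicity condition $e_i(I_{\mathfrak p})=e_i((I,z)_{\mathfrak p})$ for $0\le i\le k$ and every $\mathfrak p\in\mathrm{Min}(R/I)$, which is exactly membership of $(I,z)$ in $V_k$.

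This condition is local at the minimal primes of $I$, so I would fix $\mathfrak p\in\mathrm{Min}(R/I)$ and localize. Colon commutes with localization, $x$ localizes to a reduction of $I_{\mathfrak p}^{N}$, the residue field of $R_{\mathfrak p}$ remains infinite, and $\dim R_{\mathfrak p}=s$ by the Ratliff-Rush theorem quoted at the start of the proof of Theorem \ref{shah'thm_for_equi}. Thus the problem becomes the $\mathfrak p R_{\mathfrak p}$-primary statement in dimension $s$. By Remark \ref{rem1} (applied in $R_{\mathfrak p}$, where the dimension is $s$), the equalities $e_i(I_{\mathfrak p})=e_i((I,z)_{\mathfrak p})$ for $0\le i\le k$ are equivalent to the single estimate that $\ell\!\left((I,z)_{\mathfrak p}^{n}/I_{\mathfrak p}^{n}\right)$ is, for $n\gg 0$, a polynomial in $n$ of degree at most $s-k-1$. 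Everything thus reduces to this length bound in the primary case.

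To produce the bound I would use the colon hypothesis multiplicatively: from $zx_i\in I^{N+1}$ for $i\le k$ one gets $z^{j}(x_1,\dots,x_k)^{j}\subseteq I^{j(N+1)}$, hence $(x_1,\dots,x_k)^{j}I^{\,n-j(N+1)}\subseteq(I^{n}:z^{j})$ for all $j$. Filtering $(I,z)^{n}/I^{n}$ by powers of $z$ and feeding in these containments rewrites the length in terms of the Hilbert function of $G_{I}(R)$ modulo the leading forms $x_1^{*},\dots,x_k^{*}$. Concretely, assuming $z\notin I$ its degree-$0$ leading form $z^{*}\in R/I$ is annihilated by $x_1^{*},\dots,x_k^{*}$, since $z^{*}x_i^{*}$ is the image of $zx_i\in I^{N+1}$ in $I^{N}/I^{N+1}$, which vanishes. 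Using the infinite residue field to take $x$ to be a superficial (minimal) reduction, the forms $x_1^{*},\dots,x_k^{*}$ are part of a homogeneous system of parameters of the $s$-dimensional ring $G_{I}(R)$, so the graded module they cut out has dimension at most $s-k$, which is what yields the degree bound $s-k-1$.

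The main obstacle is exactly this final sharpening. The naive bookkeeping, namely that $\mathrm{Supp}\,(I,z)^{n}/I^{n}\subseteq V(x_1,\dots,x_k)$, or a term-by-term summation over the powers of $z$, only delivers degree $\le s-k$, i.e.\ the agreement of $e_0,\dots,e_{k-1}$; capturing the last coefficient $e_k$ forces one to use the degree-shifting relations $x_i^{*}z^{*}=0$ in $G_{I}(R)$ rather than mere support, equivalently the integral dependence $z\in\overline{I}$ together with the parameter property of $x_1^{*},\dots,x_k^{*}$. I would settle this either through the associated-graded dimension count above, tracking the higher powers $z^{j}$ and their leading forms carefully, or by an induction on $k$ that removes one general superficial element at each stage, lowering both the dimension and the index simultaneously. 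A secondary point worth flagging is that the displayed inclusion genuinely requires $x_1,\dots,x_k$ to be part of a system of parameters (equivalently $x_1^{*},\dots,x_k^{*}$ to be parameters in $G_{I}(R)$); the infinite residue field hypothesis is present precisely to guarantee this for the relevant minimal/superficial reductions.
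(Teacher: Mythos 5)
Your outer skeleton is exactly the paper's: localize at each $\mathfrak{p}\in\mathrm{Min}(R/I)$ (where $I_{\mathfrak{p}}$ is $\mathfrak{p}R_{\mathfrak{p}}$-primary of dimension $s$, the colon commutes with localization, the reduction property persists, and the residue field stays infinite), reduce to the primary case, and conclude by the maximality of $I_{k}$ in the set $V_{k}$ via Remark \ref{rem1}. That part is sound. The gap is in what you do next: where the paper simply invokes Shah's Theorem 2 of \cite{S} — which is precisely the $\mathfrak{m}$-primary case of this lemma, $(I^{N+1}:x_{1},\dots,x_{k})\subseteq I_{k}$ — you attempt to reprove that theorem from scratch, and you do not finish. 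You yourself identify the decisive point: the naive filtration of $(I,z)^{n}/I^{n}$ by powers of $z$, together with the support observation, only bounds the length polynomial by degree $s-k$, i.e.\ it recovers $e_{0},\dots,e_{k-1}$ but not $e_{k}$; sharpening to degree $s-k-1$ is the entire content of Shah's theorem. Your proposed remedies (``I would settle this either through the associated-graded dimension count \dots or by an induction on $k$'') are strategies, not arguments: you never establish that the relations $x_{i}^{*}z^{*}=0$ in $G_{I}(R)$ force the relevant graded module to have dimension at most $s-k$ with the required uniform polynomial bound, and you concede without resolving that the leading forms of the higher powers $z^{j}$ need not be $(z^{*})^{j}$ and that $x_{1}^{*},\dots,x_{k}^{*}$ being part of a homogeneous system of parameters of $G_{I}(R)$ needs justification. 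As it stands, the heart of the lemma is asserted, not proved.

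The repair is short: replace your unfinished primary-case analysis with the citation the paper uses. After localizing, \cite[Theorem 2]{S} gives $(I^{N+1}:x_{1},\dots,x_{k})_{\mathfrak{p}}\subseteq (I_{\mathfrak{p}})_{k}$ for each $\mathfrak{p}\in\mathrm{Min}(R/I)$; the sandwich $I_{\mathfrak{p}}\subseteq (I^{N+1}:x_{1},\dots,x_{k})_{\mathfrak{p}}\subseteq (I_{\mathfrak{p}})_{k}$ together with Remark \ref{rem2} yields $e_{i}((I^{N+1}:x_{1},\dots,x_{k})_{\mathfrak{p}})=e_{i}(I_{\mathfrak{p}})$ for $0\leq i\leq k$, and maximality of $I_{k}$ (your $V_{k}$ argument, which is fine, including the observation that the colon ideal contains $I$) finishes the proof. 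With that substitution your argument coincides with the paper's; without it, the proposal has a genuine hole at the one step that carries all the difficulty.
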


\begin{proof}
It is easy to see we may assume $x$ is a minimal reduction. Fix any $N \geq 1$ and let $x_{1},...,x_{s}$ be a minimal reduction of $I^{N}$.
By \cite[Theorem 2]{S}, we have $(I^{N+1}:x_{1},...,x_{k})_{\mathfrak{p}}\subseteq (I_{\mathfrak{p}})_{k}$ for each $\mathfrak{p}\in \text{Min}(R/I)$.
Hence, for each $\mathfrak{p}\in \text{Min}(R/I)$, the equality $e_{i}((I^{N+1}:x_{1},...,x_{k})_{\mathfrak{p}})=e_{i}(I_{\mathfrak{p}})$ is true for $0\leq i \leq k$.
The result then follows by maximality of $I_{k}$.
\end{proof}

\begin{lem}\label{syst_equi}
Let $(R,\mathfrak{m})$ be quasi-unmixed ring and $I$ an equimultiple ideal with analytic spread $s$. Let $x_{1},...,x_{s}\in I^{N}$ for some $N\geq 1$
and $x'_{1},...,x'_{s}$ their images in $I^{N}/I^{N+1}$. Let $a_{1},...,a_{t}\in R$ be a system of parameters module $I$.
Then
$$
\begin{array}{c}
  a'_{1},...,a'_{t},x'_{1},...,x'_{s} \text{ is a system of parameters of }G_{I}(R)
 \vspace{0.2cm}\\
\text{ if and only if }  x_{1},...,x_{s} \text{ form a minimal reduction of } I^{N}
\end{array}
$$
\end{lem}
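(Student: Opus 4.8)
The plan is to translate the statement about a system of parameters of $G:=G_{I}(R)$ into a purely ideal-theoretic condition on the powers of $I$, and then to recognize that condition as the defining property of a reduction of $I^{N}$. First I would record the numerics. For any proper ideal one has $\dim G=\dim R=d$, and since $R$ is quasi-unmixed and $I$ is equimultiple, $t=\dim R/I=d-\text{ht}(I)=d-s$; hence $a'_{1},\dots,a'_{t},x'_{1},\dots,x'_{s}$ is a family of exactly $d$ homogeneous elements of the graded maximal ideal $\mathfrak{M}$ of $G$ (the $a'_{i}$ in degree $0$, the $x'_{j}$ in degree $N$). Writing $\mathfrak{a}=(a'_{1},\dots,a'_{t},x'_{1},\dots,x'_{s})G$, these elements form a system of parameters of $G$ if and only if $\mathfrak{a}$ is $\mathfrak{M}$-primary, that is, $\dim G/\mathfrak{a}=0$.

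Next I would compute the graded pieces. Setting $(a)=(a_{1},\dots,a_{t})$ and $\mathfrak{x}=(x_{1},\dots,x_{s})$, a direct computation gives
$$[G/\mathfrak{a}]_{n}=\frac{I^{n}}{(a)I^{n}+\mathfrak{x}I^{n-N}+I^{n+1}}.$$
Its degree-zero part is $R/(I+(a))$, which is Artinian because $a_{1},\dots,a_{t}$ is a system of parameters modulo $I$; since $G/\mathfrak{a}$ is generated over this Artinian ring in degree $1$, graded Nakayama (equivalently, the Hilbert function of $G/\mathfrak a$ is eventually polynomial of degree $\dim(G/\mathfrak{a})-1$) shows that $\dim G/\mathfrak{a}=0$ precisely when $[G/\mathfrak{a}]_{n}=0$ for all $n\gg0$. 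Thus the whole lemma reduces to the equivalence between $x_{1},\dots,x_{s}$ being a reduction of $I^{N}$ and the condition
$$(\star)\qquad I^{n}=(a)I^{n}+\mathfrak{x}I^{n-N}+I^{n+1}\quad\text{for }n\gg0.$$

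The forward implication is then immediate: if $\mathfrak{x}$ is a reduction of $I^{N}$ then $\mathfrak{x}I^{n-N}=I^{n}$ for $n\gg0$, so $(\star)$ holds trivially and $\mathfrak a$ is $\mathfrak M$-primary. \textbf{The crux is the converse}, where the key idea is to absorb the two spurious summands of $(\star)$ into $\mathfrak{m}I^{n}$: since each $a_{i}\in\mathfrak{m}$ we have $(a)I^{n}\subseteq\mathfrak{m}I^{n}$, and $I^{n+1}\subseteq\mathfrak{m}I^{n}$, so $(\star)$ forces $I^{n}=\mathfrak{m}I^{n}+\mathfrak{x}I^{n-N}$. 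Applying Nakayama to the finitely generated module $I^{n}/\mathfrak{x}I^{n-N}$ yields $I^{n}=\mathfrak{x}I^{n-N}$ for $n\gg0$, which is exactly the assertion that $\mathfrak{x}$ is a reduction of $I^{N}$. Finally, since $\mathfrak{x}$ is generated by $s$ elements while every reduction of $I^{N}$ requires at least $s(I^{N})=s$ generators, $\mathfrak{x}$ is in fact a minimal reduction, completing the equivalence.

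I expect the only delicate points to be the graded-Nakayama reduction of the system-of-parameters condition to $(\star)$, and the absorption-plus-Nakayama step in the converse; the remaining input is the bookkeeping $s+t=d$ supplied by quasi-unmixedness and equimultiplicity, together with the standard fact that the minimal number of generators of any reduction of $I^{N}$ equals its analytic spread $s$.
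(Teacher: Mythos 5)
Your proof is correct, but it takes a genuinely different (and more self-contained) route than the paper's. The paper proves the forward direction by working directly with the graded ideal $G_{+}$: writing $L=(a'_{1},\dots,a'_{t})G$, it deduces from the system-of-parameters hypothesis that $G_{+}^{n}\subseteq (x_{1},\dots,x_{s})G+L$ for some $n$, sharpens this to $G_{+}^{n}\subseteq (x_{1},\dots,x_{s})G+LG_{+}^{n}$, kills $L$ by graded Nakayama to get $G_{+}^{n}\subseteq (x_{1},\dots,x_{s})G$, and reads off the reduction property; for the converse it simply says the argument is analogous to \cite[Corollary 2.7]{GHO}. You instead reduce \emph{both} implications to the single degreewise identity $I^{n}=(a)I^{n}+\mathfrak{x}I^{n-N}+I^{n+1}$ for $n\gg 0$, obtained from the computation of $[G/\mathfrak{a}]_{n}$ together with the Hilbert-function characterization of $\dim G/\mathfrak{a}=0$ (legitimate here because $[G/\mathfrak{a}]_{0}=R/(I+(a))$ is Artinian and $G/\mathfrak{a}$ is generated in degree one), and then dispose of the spurious summands by absorbing $(a)I^{n}+I^{n+1}$ into $\mathfrak{m}I^{n}$ and applying ordinary Nakayama. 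This buys a symmetric treatment of both directions with no external citation, at the cost of invoking the dimension-equals-degree-plus-one fact for Hilbert polynomials; the essential mechanisms (the count $t+s=d$ supplied by quasi-unmixedness and equimultiplicity, and a Nakayama step to eliminate the degree-zero parameters) are the same in both arguments. One caveat: your closing justification of \emph{minimality} is stated too weakly --- the observation that every reduction of $I^{N}$ needs at least $s$ generators does not by itself exclude a proper sub-reduction of $\mathfrak{x}$; the correct standard fact is that, over an infinite residue field, a reduction generated by exactly $\ell(I^{N})=s$ elements is automatically a minimal reduction. Since the infinite-residue-field hypothesis is part of the paper's ambient setup (it appears explicitly in Lemma \ref{lem_alg_str}) and the paper's own proof passes over the same point silently, this is a citation-level gloss rather than a genuine gap.
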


\begin{proof}
Denote $G_{I}(R)$ by $G$ and the ideal $(a'_{1},...,a'_{t})G_{I}(R)$ by $L$.
If the sequence $a'_{1},...,a'_{t},x'_{1},...,x'_{s}$ is a system of parameters of $G$,
we have $\sqrt{(G/L)_{+}}=(x'_{1},...,x'_{s})G/L$. Then $G_{+}^{n}\subseteq (x_{1},...,x_{s})G+L$ for some positive integer $n$. Thus, it is easy to see that
$G_{+}^{n}\subseteq (x_{1},...,x_{s})G+LG_{+}^{n}$. By Nakayama's Lemma, one can conclude $G_{+}^{n}\subseteq (x_{1},...,x_{s})G$. Therefore,
$x_{1},...,x_{s}$ form a minimal reduction of $I^{N}$.
The converse follows analogously to \cite[Corollary 2.7]{GHO}.
\end{proof}

The next result is also a generalization of Theorem 4 in \cite{S} as it can be easily observed.

\begin{thm}\label{height_ass2}
Let $(R,\mathfrak{m})$ be a quasi-unmixed local ring and $I$ an equimultiple ideal. Let $a_{1},...,a_{r}$ be a system of parameters modulo $I$
and let $L=(a'_{1},...,a'_{r})$ denote the ideal generated by their images in $G_{I}(R)$.
Fix $k\in\{1,..,s\}$. If $I^{n}=(I^{n})_{k}$ for all $n$
then $\emph{ht}(P+L)<k+d-s$ for every $P\in \emph{Ass}(G_{I}(R))$.
\end{thm}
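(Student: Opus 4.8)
The plan is to transport the height bound $\text{ht}(P)<k$ supplied by part (1) of Theorem~\ref{height_ass} into a lower bound on $\dim G/(P+L)$, and then read off the height of $P+L$ from a dimension formula in $G:=G_I(R)$. First I would record the bookkeeping: since $R$ is quasi-unmixed and $I$ is equimultiple we have $\dim R/I=d-s$, so a system of parameters modulo $I$ consists of exactly $r=d-s$ elements, and the target bound reads $\text{ht}(P+L)<k+r$. The hypothesis $I^{n}=(I^{n})_{k}$ for all $n$ is precisely the hypothesis of Theorem~\ref{height_ass}(1), which yields $\text{ht}(P)<k$, i.e. $\text{ht}(P)\le k-1$, for every $P\in\text{Ass}(G)$. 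Since associated primes of $G$ are graded, I may compute heights and coheights after localizing at the graded maximal ideal $\mathfrak{M}=(\mathfrak{m}/I)\oplus G_{+}$, so that $G$ behaves like a local ring of dimension $d$.

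The key structural input is that $G$ is equidimensional and catenary of dimension $d$. I would obtain this from the extended Rees algebra $\mathcal{R}=R[It,t^{-1}]$: because $R$ is quasi-unmixed, $\mathcal{R}$ is quasi-unmixed of dimension $d+1$, hence equidimensional and catenary; as $t^{-1}$ is a nonzerodivisor, Krull's principal ideal theorem forces every minimal prime $\mathfrak{P}$ of $(t^{-1})$ to have $\text{ht}(\mathfrak{P})=1$ and, by the height--coheight balance in $\mathcal{R}$, quotient dimension $d$. Thus all minimal primes of $G=\mathcal{R}/(t^{-1})$ have $\dim G/\mathfrak{P}=d$, and $G$ inherits catenarity from $\mathcal{R}$. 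Consequently $\text{ht}(Q)+\dim(G/Q)=d$ for every (graded) prime $Q$ of $G$; in particular $\dim(G/P)=d-\text{ht}(P)>d-k$.

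With this in hand the conclusion is a short dimension count. Since $L=(a'_{1},\dots,a'_{r})$ is generated by $r$ elements, Krull's height theorem applied in the domain $G/P$ gives $\dim G/(P+L)\ge\dim(G/P)-r>(d-k)-r=s-k$, so $\dim G/(P+L)\ge s-k+1$. Choosing a minimal prime $Q_{0}$ of $P+L$ with $\dim(G/Q_{0})=\dim G/(P+L)$ and using the balance once more, $\text{ht}(P+L)\le\text{ht}(Q_{0})=d-\dim(G/Q_{0})=d-\dim G/(P+L)\le d-(s-k+1)=k+d-s-1<k+d-s$, as desired. Here Lemma~\ref{syst_equi} enters only to confirm that $a'_{1},\dots,a'_{r}$ extend, via a minimal reduction of $I$, to a full system of parameters of $G$, reaffirming $\dim G=d$ and $r=d-s$.

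The main obstacle is the second step: the identity $\dim(G/P)=d-\text{ht}(P)$ must be valid for the possibly \emph{embedded} associated primes $P$, and this fails for a general associated graded ring. It is exactly the quasi-unmixedness of $R$---pushed up to $\mathcal{R}$ and then down to $G$ through the regular element $t^{-1}$---that makes $G$ equidimensional and catenary, so that both height--coheight balances used above are legitimate. Once that structural fact is secured, the remainder is a routine application of Krull's principal ideal theorem together with Theorem~\ref{height_ass}(1), and the primary case $s=d$ (where $r=0$ and $L=0$) recovers the bound $\text{ht}(P)<k$ as a consistency check.
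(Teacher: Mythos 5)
Your proof is correct, but it takes a genuinely different route from the paper. The paper argues directly, in the style of Shah's original Theorem 4: it writes $P=(0':y')$ for a homogeneous $y'$ coming from $y\in I^{n-1}\setminus I^{n}$, assumes $\text{ht}(P+L)\geq k+d-s$, uses Shah's Lemma 2(E) to produce a homogeneous system of parameters of $G/L$ whose first $k$ members lie in $P(G/L)$, converts this via Lemma \ref{syst_equi} (together with \cite[Proposition 2.6]{GHO}) into a minimal reduction $x_{1},\dots,x_{s}$ of a power $I^{m}$, and then invokes the colon-capturing Lemma \ref{lem_alg_str}, $(I^{N+1}:x_{1},\dots,x_{k})\subseteq I_{k}$, to conclude $y\in (I^{n})_{k}=I^{n}$, a contradiction; note that this makes no use of Theorem \ref{height_ass}(1) and needs no equidimensionality of $G$. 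You instead quote Theorem \ref{height_ass}(1) to get $\text{ht}(P)\leq k-1$ and reduce everything to the height--coheight balance $\text{ht}(Q)+\dim(G/Q)=d$ in $G$, which you correctly secure by pushing quasi-unmixedness up to $\mathcal{R}_{\mathfrak{N}}$ (Ratliff: universally catenary plus equidimensional) and down across the nonzerodivisor $t^{-1}$; with that balance, $\text{ht}(P+L)\leq \text{ht}(P)+r$ with $r=d-s$ gives the bound at once, and the identifications of graded heights and coheights with their localizations at the graded maximal ideal are all standard. There is no circularity, since the paper proves Theorem \ref{height_ass}(1) beforehand via Lemma \ref{equiv_height} and Shah's local result. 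What each approach buys: yours exposes Theorem \ref{height_ass2} as an essentially formal dimension-theoretic corollary of Theorem \ref{height_ass}(1) once $G$ is known to be equidimensional and catenary --- shorter, but dependent on that structural fact (which is exactly the point that would fail for embedded primes over a general base, as you note); the paper's argument is self-contained at the level of the graded algebra, yields the statement as a genuine generalization of Shah's Theorem 4 by the same mechanism, and is the reason Lemmas \ref{lem_alg_str} and \ref{syst_equi} appear at all. One small remark: Lemma \ref{syst_equi} is not actually needed in your version, since $\dim G=d$ and $r=d-s$ are standard for equimultiple $I$ over a quasi-unmixed ring.
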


\begin{proof}
Let $P$ be an associated prime of $G_{I}(R)$. Since $P$ is graded we have $P=(0':y')$ where $y'$ is the image in $G_{I}(R)$
of $y\in I^{n-1}-I^{n}$ for some $n\geq 1$. Suppose $\text{ht}(P+L) \geq k+d-s$. Then we obtain $\dim \frac{G/L}{P(G/L)}\leq s-k$.
By \cite[Lemma 2(E)]{S} we can get a homogeneous system of parameters (of equal degree) $\overline{x_{1}},...,\overline{x_{s}}$ for $G/L$ such that
$\overline{x_{1}},...,\overline{x_{k}}\in P(G/L)$. Let $x'_{1},...,x'_{s}\in G_{I}(R)$ be homogeneous inverse images of
$\overline{x_{1}},...,\overline{x_{s}}$ respectively so that $x'_{1},...,x'_{k}\in P$.
Since $a'_{1},...,a'_{t},x'_{1},...,x'_{s}$ is a system of parameters of $G_{I}(R)$ (\cite[Proposition 2.6]{GHO}) we can then use
Lemma \ref{syst_equi} to obtain that
$x_{1},...,x_{s}$ is a minimal reduction of $I^{m}$ for
some $m$. It is easy to see we may assume $m=nN$. Hence, $y(x_{1},...,x_{k}) \subseteq I^{n-1+nN+1}$, and so $y\in ((I^{n})^{N+1}: x_{1},...,x_{k})$.
By Lemma \ref{lem_alg_str} one has $y\in (I^{n})_{k}=I^{n}$, which is a contradiction.
\end{proof}

\begin{defi}
Let $R$ be a local ring and $I$ a proper ideal of $R$.
If for each $\mathfrak{p}\in \emph{Min}(R/I)$, the localization $I_{\mathfrak{p}}$
has reduction number $r(I_{\mathfrak{p}})\leq t$, it is said
that $I$ has generically reduction number $t$.
\end{defi}

\begin{prop}\label{gene}
Let $(R,\mathfrak{m})$ be a Cohen-Macaulay local ring of infinite residue field and $I$ an equimultiple ideal of analytic spread $s$.
Assume $R/I$ satisfies the $S_{1}$ condition and $I$ has generically reduction number 1. Then $G_{I}(R)$ satisfies the $S_{1}$ condition.
\end{prop}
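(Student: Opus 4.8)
The plan is to reduce the statement, via the $S_1$ criterion of Theorem \ref{height_ass}, to the equality $(I^{n})_{1}=I^{n}$ for every $n$, and then to split this into a \emph{generic} part and an \emph{unmixedness} part. First I would record that, because $R$ is Cohen--Macaulay and $I$ is equimultiple, $\text{grade}\,I=\text{ht}\,I=s(I)=s$; in particular $I$ has a minimal reduction $J=(a_{1},\dots,a_{s})$ generated by a regular sequence. Fix a minimal prime $\mathfrak{p}$ of $I$. Then $R_{\mathfrak{p}}$ is Cohen--Macaulay of dimension $s$ and $I_{\mathfrak{p}}$ is $\mathfrak{p}R_{\mathfrak{p}}$-primary with $r(I_{\mathfrak{p}})\le 1$, so the standard Cohen--Macaulayness criterion for the associated graded ring applied to the regular-sequence reduction $J_{\mathfrak{p}}$ shows that $G_{I_{\mathfrak{p}}}(R_{\mathfrak{p}})$ is Cohen--Macaulay, whence $\text{depth}\,G_{I_{\mathfrak{p}}}(R_{\mathfrak{p}})=s$. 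By \cite[Theorem 5]{S} (the primary case underlying Theorem \ref{grade}) this gives $(I^{n}_{\mathfrak{p}})_{1}=I^{n}_{\mathfrak{p}}$ for all $n$. Feeding this into the primary decomposition of coefficient ideals (Proposition \ref{prym_deco}) yields
$$
(I^{n})_{1}=\bigcap_{i=1}^{r}\big((I^{n}_{\mathfrak{p}_{i}})_{1}\cap R\big)=\bigcap_{i=1}^{r}\big(I^{n}_{\mathfrak{p}_{i}}\cap R\big)=(I^{n})^{u},
$$
so the entire problem collapses to proving that every power $I^{n}$ is unmixed, i.e.\ $(I^{n})^{u}=I^{n}$.

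For the unmixedness I would first set up the asymptotic machinery. Since $\text{grade}\,I=s(I)$, \cite[(1.2)]{HLS} gives $(I^{n})^{*}=I^{n}$ for all $n$, and hence $\text{Ass}(R/I^{n})\subseteq\text{Ass}(R/I^{n+1})$ for all $n$ by \cite[Lemma 6.6]{PZ}. The hypothesis that $R/I$ satisfies $S_{1}$ is precisely $\text{Ass}(R/I)=\text{Min}(R/I)$, which is the base of this increasing chain. It therefore suffices to show the chain never acquires an embedded prime, that is, $\text{Ass}(R/I^{n})=\text{Min}(R/I)$ for every $n$.

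I would establish this by induction on $d=\dim R$. Suppose $Q\in\text{Ass}(R/I^{n})$ with $\text{ht}\,Q>s$. If $Q\neq\mathfrak{m}$, then $R_{Q}$ is Cohen--Macaulay of dimension $\text{ht}\,Q<d$, and $I_{Q}$ inherits all the hypotheses (it is equimultiple of analytic spread $s$, $R_{Q}/I_{Q}$ satisfies $S_{1}$, and $I_{Q}$ has generically reduction number one, after a harmless residue-field extension); the induction hypothesis then makes $I^{n}_{Q}$ unmixed in $R_{Q}$, so $QR_{Q}\notin\text{Ass}(R_{Q}/I^{n}_{Q})$, a contradiction. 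Thus the only possible embedded prime is $\mathfrak{m}$, and this can occur only when $s<d$.

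The remaining, genuinely delicate, point is to exclude $\mathfrak{m}$ from $\text{Ass}(R/I^{n})$ when $s=s(I)<d$; this is the main obstacle. Because the chain of associated primes is increasing, it is enough to show $\mathfrak{m}\notin\text{Ass}(R/I^{n})$ for $n\gg 0$, i.e.\ $\mathfrak{m}\notin A^{*}(I)$. By McAdam's theorem (the theory of asymptotic prime divisors, \cite{PZ}), membership $\mathfrak{m}\in\bar{A}^{*}(I)$ would force $s(I)=\dim R$, so $\mathfrak{m}\notin\bar{A}^{*}(I)$; the difficulty is that in general one only has $\bar{A}^{*}(I)\subseteq A^{*}(I)$, so this does not by itself exclude $\mathfrak{m}$ from the ordinary-power associated primes. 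This is exactly where the generically-reduction-one hypothesis must enter: it forces the defect modules $I^{n}/JI^{n-1}$ to vanish at every minimal prime of $I$, hence to be supported in codimension $>s$ inside $V(I)$, and I expect this to be enough to identify $A^{*}(I)$ with $\bar{A}^{*}(I)$ (equivalently, to show $\text{depth}(R/I^{n})\ge 1$ for all $n$) and thereby exclude $\mathfrak{m}$. Making this last step precise --- upgrading the integral-closure asymptotic statement to the ordinary-powers one via reduction number one --- is where the hypotheses genuinely interact and is the crux of the argument. Once it is in place one has $(I^{n})^{u}=I^{n}$ for all $n$, hence $(I^{n})_{1}=I^{n}$ for all $n$, and Theorem \ref{height_ass} concludes that $G_{I}(R)$ satisfies the $S_{1}$ condition.
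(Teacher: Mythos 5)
Your reduction is exactly the paper's: localize at a minimal prime $\mathfrak{p}$, note $r(I_{\mathfrak{p}})\le 1$ makes $G_{I_{\mathfrak{p}}}(R_{\mathfrak{p}})$ Cohen--Macaulay (the paper cites \cite[Proposition 26.12]{HIO}), invoke Shah to get $(I^{n}_{\mathfrak{p}})_{1}=I^{n}_{\mathfrak{p}}$ for all $n$, and use Proposition \ref{prym_deco} (or Lemma \ref{equiv_height}) to reduce everything to the unmixedness of all powers $I^{n}$. But your treatment of that unmixedness contains a genuine gap, which you yourself flag as ``the crux'': excluding the embedded prime $\mathfrak{m}$ from $\operatorname{Ass}(R/I^{n})$. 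The asymptotic route you sketch cannot be completed as stated. The inclusion $\bar{A}^{*}(I)\subseteq A^{*}(I)$ is strict in general, there is no theorem identifying the two sets under your hypotheses, and ``$\operatorname{depth}(R/I^{n})\ge 1$ for all $n$'' is precisely the assertion to be proved, not an available tool. So McAdam's theorem, which only controls $\bar{A}^{*}(I)$, does not by itself rule out $\mathfrak{m}\in\operatorname{Ass}(R/I^{n})$, and your proof stops one step short of the statement.

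The paper closes exactly this gap by an elementary finite-length-free argument that you nearly have in hand when you observe that generic reduction number one kills $I^{n}/JI^{n-1}$ at every minimal prime. Let $J$ be a minimal reduction of $I$; since $\operatorname{grade} I = s$ in the Cohen--Macaulay ring $R$, $J$ is generated by a regular sequence of length $s$. The exact sequence $0\to J/JI\to R/JI\to R/J\to 0$, with $J/JI\simeq (R/I)^{s}$, gives $\operatorname{Ass}(R/JI)\subseteq\operatorname{Ass}(R/I)\cup\operatorname{Ass}(R/J)$, and both of these are unmixed of height $s$ ($R/I$ by the $S_{1}$ hypothesis, $R/J$ because $R$ is Cohen--Macaulay); hence $JI$ is unmixed. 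Now promote the generic equality to a global one: $JI$ unmixed means $JI=\bigcap_{i}\bigl((JI)_{\mathfrak{p}_{i}}\cap R\bigr)$ over the minimal primes $\mathfrak{p}_{i}$ of $I$, and since $I^{2}_{\mathfrak{p}_{i}}=(JI)_{\mathfrak{p}_{i}}$ by generic reduction number one, $I^{2}\subseteq JI$, i.e.\ $I^{2}=JI$ and so $I^{n+1}=J^{n}I$ for all $n$. Then the sequences $0\to J^{n}/J^{n}I\to R/I^{n+1}\to R/J^{n}\to 0$, where $J^{n}/J^{n}I$ is a direct power of $R/I$ and $\operatorname{Ass}(R/J^{n})=\operatorname{Ass}(R/J)$, show inductively that every $I^{n}$ is unmixed. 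In short: the missing idea is not an identification $A^{*}(I)=\bar{A}^{*}(I)$, but the use of the unmixedness of $JI$ (and of $J^{n}I$) to upgrade the equality $I^{n+1}_{\mathfrak{p}}=(J^{n}I)_{\mathfrak{p}}$ at minimal primes to $I^{n+1}=J^{n}I$ globally; with that replacement your argument becomes the paper's proof, and your dimension-induction scaffolding is unnecessary.
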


\begin{proof}
Let $J$ be a minimal reduction of $I$ and consider the exact sequence
$$
0\rightarrow J/JI \rightarrow R/JI \rightarrow R/J \rightarrow 0,
$$
where $J/JI\simeq (R/I)^{s}$. Hence, since $R$ is Cohen-Macaulay
and because $I$ is unmixed, one may conclude $JI$ is unmixed.
In this way, $I^{2}R_{\mathfrak{p}}= JIR_{\mathfrak{p}}$ for all $\mathfrak{p} \in \text{Ass}(R/JI)=\text{Min}(R/JI)$, and then
$I^{2}=JI$. Consider now the following
exact sequence
$$
0\rightarrow J^{2}/J^{2}I \rightarrow R/I^{3} \rightarrow R/J^{2} \rightarrow 0.
$$
Since the associated prime ideals of $J^{n}$ are the same as those of $J$ and
$J^{2}/J^{2}I$ is isomorphic to a power of $R/I$, one may conclude
that $I^{3}$ is unmixed. Inductively we obtain that
all powers $I^{n}$ are unmixed ideals.
On the other hand, if $\mathfrak{p}\in \text{Ass}_{R}(R/I)$
we have $R_{\mathfrak{p}}/I_{\mathfrak{p}}$ is a Cohen-Macaulay ring and hence, by \cite[Proposition 26.12]{HIO} we obtain that
$G_{I_{\mathfrak{p}}}(R_{\mathfrak{p}})$ is Cohen-Macaulay for all minimal prime $\mathfrak{p}$
of $I$, and therefore we may use \cite[Theorem 4]{S} to obtain
$I_{\mathfrak{p}}^{n}=(I_{\mathfrak{p}}^{n})_{1}$ for all $n$ and all $\mathfrak{p}\in \text{Min}(R/I)$.
Now one may just apply Lemma \ref{equiv_height} to conclude
the proof.
\end{proof}

In \cite{CP}, Corso and Polini indicated a method to provide ideals $I$ of reduction number 1. In this way, through above proposition we may produce
associated graded rings $G_{I}(R)$ satisfying the $S_{1}$ condition.

\begin{cor}\label{construct_of_G_I(R)}
Let $(R,\mathfrak{m})$ be a Cohen-Macaulay ring, $\mathfrak{p}$ a prime ideal of height $g$ such that $R_{\mathfrak{p}}$ is not a regular local ring
and $J=(x_{1},...,x_{g}) \subseteq \mathfrak{p}$ a regular
sequence. Set $I=J:\mathfrak{p}$. Then $G_{I}(R)$ satisfies $S_{1}$.
\end{cor}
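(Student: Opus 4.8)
The plan is to realize this corollary as a direct application of Proposition \ref{gene} to the linked ideal $I=J:\mathfrak{p}$; the work reduces to checking that $I$ meets the four hypotheses of that proposition, namely that $R$ is Cohen--Macaulay local with infinite residue field, that $I$ is equimultiple, that $R/I$ satisfies $S_{1}$, and that $I$ has generically reduction number $1$. For the residue field I would first pass to $R(X)=R[X]_{\mathfrak{m}R[X]}$, which is faithfully flat over $R$, preserves the Cohen--Macaulay property together with $\operatorname{ht}(I)$ and $s(I)$, and has regular fibres; since $S_{1}$ for $G_{I}(R)$ can be tested after this base change, we may assume $R/\mathfrak{m}$ infinite. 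The two genuinely structural inputs I would draw from the Corso--Polini construction \cite{CP} are the unmixedness of $I$ and the reduction-number-one identity for its socle localization; everything else is bookkeeping.

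First I would pin down the invariants of $I$. Localizing at $\mathfrak{p}$, the ideal $J_{\mathfrak{p}}$ is a parameter ideal of the Cohen--Macaulay local ring $R_{\mathfrak{p}}$ and $I_{\mathfrak{p}}=J_{\mathfrak{p}}:\mathfrak{p}R_{\mathfrak{p}}$ is the associated socle ideal. Because $R_{\mathfrak{p}}$ is not regular we have $J_{\mathfrak{p}}\subsetneq \mathfrak{p}R_{\mathfrak{p}}$, so $I_{\mathfrak{p}}=J_{\mathfrak{p}}:\mathfrak{p}R_{\mathfrak{p}}$ is a proper ideal; were $I\not\subseteq\mathfrak{p}$ we would get $I_{\mathfrak{p}}=R_{\mathfrak{p}}$, a contradiction, so $I\subseteq\mathfrak{p}$. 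Combined with $J\subseteq I$ this yields $g=\operatorname{ht}(J)\le\operatorname{ht}(I)\le\operatorname{ht}(\mathfrak{p})=g$, hence $\operatorname{ht}(I)=\operatorname{grade}(I)=g$ as $R$ is Cohen--Macaulay. By linkage theory (part of the Corso--Polini construction in \cite{CP}) the ideal $I$ is unmixed of height $g$, so $\operatorname{Ass}(R/I)=\operatorname{Min}(R/I)$; this is exactly the assertion that $R/I$ satisfies $S_{1}$.

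For equimultiplicity and the reduction number I would argue minimal prime by minimal prime. At $\mathfrak{p}$ the socle ideal satisfies $I_{\mathfrak{p}}^{2}=J_{\mathfrak{p}}I_{\mathfrak{p}}$: the inclusion $I_{\mathfrak{p}}^{2}\subseteq J_{\mathfrak{p}}$ is immediate from $\mathfrak{p}R_{\mathfrak{p}}\cdot I_{\mathfrak{p}}\subseteq J_{\mathfrak{p}}$ together with $I_{\mathfrak{p}}\subseteq\mathfrak{p}R_{\mathfrak{p}}$, and the reverse is the reduction-number-one statement of \cite{CP}. At any other minimal prime $\mathfrak{q}$ of $I$ we have $\mathfrak{p}\not\subseteq\mathfrak{q}$, since both have height $g$; therefore $\mathfrak{p}R_{\mathfrak{q}}=R_{\mathfrak{q}}$ and $I_{\mathfrak{q}}=J_{\mathfrak{q}}:R_{\mathfrak{q}}=J_{\mathfrak{q}}$ is a complete intersection. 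Thus $J_{\mathfrak{q}}$ is a reduction of $I_{\mathfrak{q}}$ with $r(I_{\mathfrak{q}})\le 1$ at every minimal prime, so $I$ has generically reduction number $1$, and in particular $e_{0}(J_{\mathfrak{q}})=e_{0}(I_{\mathfrak{q}})$ for every $\mathfrak{q}\in\operatorname{Min}(R/J)$. Since the complete intersection $J$ is equimultiple of analytic spread $g$, Böger's Theorem applied to $J\subseteq I$ gives $I\subseteq\overline{J}$; hence $J$ is a reduction of $I$ and $s(I)=s(J)=g=\operatorname{ht}(I)$, so $I$ is equimultiple. All hypotheses of Proposition \ref{gene} are now in force, and it yields that $G_{I}(R)$ satisfies $S_{1}$.

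The main obstacle is the structural input from linkage rather than the checking above: one needs both the unmixedness of $I=J:\mathfrak{p}$ and the identity $I_{\mathfrak{p}}^{2}=J_{\mathfrak{p}}I_{\mathfrak{p}}$ for the socle localization, and it is precisely here that the non-regularity of $R_{\mathfrak{p}}$ is used (if $R_{\mathfrak{p}}$ were regular the localization would be trivial and $\mathfrak{p}$ would drop out as a component). These two facts are the content of the Corso--Polini construction \cite{CP}, so the essential step is to recognize that their ideals fit the template of Proposition \ref{gene}; the determination of the heights, of $S_{1}$ for $R/I$, and of the analytic spread via Böger's Theorem are then routine.
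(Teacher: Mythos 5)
Your proposal is correct and takes essentially the same route as the paper: the paper's two-line proof deduces unmixedness of $I$ from $\operatorname{Ass}(R/I)\subseteq \operatorname{Ass}(R/J)$ and then invokes \cite[Theorem 2.3]{CP} (the reduction-number-one identity $I^{2}=JI$ for the link) to feed Proposition \ref{gene}, which is exactly the template you verify. Your extra steps --- the passage to $R(X)$ for an infinite residue field, the prime-by-prime localization of the Corso--Polini identity, and B\"{o}ger's theorem to obtain $s(I)=\operatorname{ht}(I)$ where the paper would read equimultiplicity directly off the global equality $I^{2}=JI$ --- merely make explicit the details the paper leaves implicit.
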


\begin{proof}
Since $\text{Ass}(R/I)\subseteq \text{Ass}(R/J)$, the ideal $I$ is unmixed.
The result follows then from \cite[Theorem 2.3]{CP}.
\end{proof}

\begin{agra}
Pedro Lima thanks Sathya Sai Baba for the guidance. Both authors are very grateful to Professor Daniel Katz for his encouragement, advices and reviews. Pedro Lima thanks the University of Kansas for all the facilities.
\end{agra}




\textbf{Department of Mathematics, Institute of Mathematics and Computer Science, ICMC, University of S\~{a}o Paulo, BRAZIL.
}

\emph{E-mail address}: apoliano27@gmail.com

\hspace{1cm}

\textbf{Department of Mathematics, Institute of Mathematics and Computer Science, ICMC, University of S\~{a}o Paulo, BRAZIL.
}

\emph{E-mail address}: vhjperez@icmc.usp.br

\hspace{1cm}
\end{document}